\documentclass{amsart}
\usepackage{amssymb}
\usepackage{multirow}
\usepackage{xy}
\usepackage{textcomp}      
\usepackage{enumitem}
\usepackage{tikz}
\usepackage{oplotsymbl} 

\xyoption{all}

\newtheorem{thm}{Theorem}[section] 
\newtheorem*{thm*}{Theorem} 
\newtheorem{algo}[thm]{Algorithm}

\newtheorem{cor}[thm]{Corollary}
\newtheorem{defn}[thm]{Definition}

\newtheorem{exmpl}[thm]{Example}

\newtheorem{prop}[thm]{Proposition}

\newtheorem{rem}[thm]{Remark}

\newtheorem{ques}[thm]{Question}

\newcommand\Lection[6][]{{\newpage 
\section[#2]{\underline{{{\sc{#2}}\if!#3!{}\else{ (#3)}\fi}}}
\if!#1!{}\else{\tiny{\begin{center}(#1)\end{center}}}\fi\flushright{{\tiny{#5}}} 
\begin{center}{{\bf{#4}}}{\ } \\{#6}\end{center}\vskip1cm}}

\def\eg{{e.g.}}

\def\lam{{\lambda}}

\def\s{\sigma}

\def\Z {{\mathbb {Z}}}

\def\EE{{\mathsf E}}

\def\GG{{\mathcal G}}

\DeclareMathAlphabet{\mathscr}{OT1}{pzc}{m}{it}

\def\ra{{\rightarrow}}
\def\lra{{\,\longrightarrow\,}}

\def\hra{{\;\hookrightarrow\;}} 

\def\sub{\subseteq}
\def\({\left(}
\def\){\right)}
\def\isom{{\;\cong\;}} 

\def\semidirect{{\ltimes}} 
\def\co{{\,{:}\,}}
\def\divides{{\,|\,}}           

\newcommand\suchthat{{\,:\ \,}}
\newcommand\subjectto{{\,|\ }}

\newcommand\comp[1]{{#1^{\operatorname{c}}}} 

\newcommand{\opname}[1]{{\operatorname{#1}}}
\newcommand{\oper}[1]{{\operatorname{#1}}}
\newcommand\operA[2]{{\if!#2!\operatorname{#1}\else{\operatorname{#1}_{#2}^{\phantom{I}}}\fi}} 

\newcommand{\Tr}[1][]{\if!#1!\operatorname{Tr}\else{\operatorname{Tr}_{#1}^{\phantom{I}}}\fi} 
\newcommand{\sgn}[2][!]{{\operatorname{sgn}\if!#1(#2)\else\relax\fi}}
\DeclareMathOperator{\rank}{{{rank}}} 

\DeclareMathOperator{\Aut}{Aut} %
\newcommand\algint[2][]{\if!#1\relax O_{#2}\else{O_{#2}^{\phantom{I}}}\fi}

\renewcommand\H[4][!]{{\operatorname{H}^{#2}\!\!\;({#3},{#4}{\if!#1\relax\else(#1)\fi})}}

\newcommand\cond[2][!]{{\operatorname{cond}_{\if!#1\relax\else{\comp{#1}}\fi}(#2)}}

\newcommand\abs[2][]{\left|{#2}\right|_{#1}} 

\newcommand{\set}[1]{{\left\{#1\right\}}}

\newcommand{\card}[1]{{\left|{#1}\right|}}
\newcommand\ideal[1]{{\left<{#1}\right>}}
\newcommand\sg[1]{{\ideal{#1}}}

\newcommand\dist[3][]{{\oper{dist}_{#1}(#2,#3)}} 

\newcommand\Pic[1]{{\operatorname{Pic}}} 
\newcommand\Div[1]{{\operatorname{Div}}} 

\newcommand\Cref[1]{{Corollary~\ref{#1}}}
\newcommand\Dref[1]{{Definition~\ref{#1}}}
\newcommand\Eref[1]{{Example~\ref{#1}}}
\newcommand\Erefs[2]{{Examples~\ref{#1} and~\ref{#2}}}

\newcommand\Pref[1]{{Proposition~\ref{#1}}}
\newcommand\Rref[1]{{Remark~\ref{#1}}}
\newcommand\Tref[1]{{Theorem~\ref{#1}}}
\newcommand\Fref[1]{{Figure~\ref{#1}}}
\newcommand\Sref[1]{{Section~\ref{#1}}}
\newcommand\Srefs[2]{{Sections~\ref{#1} and~\ref{#2}}}
\newcommand\Ssref[1]{{Subsection~\ref{#1}}}

\newcommand\eq[1]{{(\ref{#1})}}

\newcommand\Eq[1]{{Equation \eq{#1}}}

\long\def\half#1\halved{{\footnotesize{#1}}}

\newcommand\thisfile{{\jobname.tex}}
\newcommand\paper[7]{{{#1},\ {\it{#2}},\ {#3}\ {\bf{#4}}\if!#5!\else(#5)\fi,\ {#6},\ ({#7}).}} 
\newcommand\book[4]{{{#1},\ {{#2}}{\if!#3!\relax\else{,\ {#3}}\fi}{\if!#4!\relax\else{,\ {#4}}\fi}.}} 
\newcommand\submitted[3]{{{#1},\ {\it{#2}}, submitted{\if!#3!{}\else, ({#3})\fi}.}} 

\newcommand\dd[4][!]{\abs[#2]{\:\!\det{\if!#1\relax\:\!\!\else(#1)\fi}}^{#3} #4}

\newcommand\CAT{{\mathfrak{C}}}
\newcommand\Aref[1]{{Algorithm~\ref{#1}}}

\newcommand\BIT{\begin{enumerate}}
\newcommand\EIT{\end{enumerate}}

\newcommand\Y{{\mbox{$\mathsf{Y}$}}}
\newcommand\CY{{C_{\mathsf{Y}}}}
\newcommand\AY{{A_{\mathsf{Y}}}}
\newcommand\path[1][n]{{\mathfrak{p}_{#1}}}
\newcommand\cyc[1][n]{{\mathfrak{c}_{#1}}}
\newcommand\Line[1]{{\mathcal{L}{#1}}}
\newcommand\Cover[1][\Sigma]{{W(\covg{#1})}}
\newcommand\covg[1]{{\mathcal{L}^{[2]}#1}}
\newcommand\ab[1]{{#1/[#1,#1]}}
\newcommand\HNN{{\opname{HNN}}}
\newcommand\cycpath[2]{{\cyc[#1]\vee\path[#2]}}

\normalfont

\title{Stabilizers in Coxeter groups with a transpositional action}

\def\UVemail{vishne@math.biu.ac.il}

\author[N.~Cohen]{Noa Cohen}
\author[U.~Vishne]{Uzi Vishne}
\email{noa.cohen.1@gmail.com}
\email{\UVemail}
\address{Department of Mathematics, Bar-Ilan University}

\date\today
\thanks{Partially supported by Israel Science Foundation grant 1994/20.}

\subjclass[2020]{Primary 20F55; Secondary 20F05, 05C25, 20E06.}

\begin{document}

\begin{abstract}
We study simply-laced Coxeter groups whose Coxeter diagram is the line graph $\Line{\Gamma}$ of some simple graph $\Gamma$; such groups act on the set of vertices of the graph by transpositions. Our goal is to describe the point stabilizer of this action in detail. For any tree $\Sigma$, we show that the stabilizer is a finite-index reflection subgroup, and identify its Coxeter diagram, thereby discovering finite-index embeddings of Coxeter groups $\Cover \hra W(\Line{\Sigma})$ for any tree $\Sigma$, including infinitely many cases where the stabilizer itself is simply-laced.
\end{abstract}


\maketitle

\def\CoxeterDiagI{{*={\bullet} \ar@{-}[rr] \ar@{-}[rd] & {} & *={\bullet} \ar@{-}[ld] \ar@{-}[rr] \ar@{-}[rd] & {} & *={\bullet}  \ar@{-}[ld]\\
{} & *={\bullet} \ar@{-}[rr]  & {} & *={\bullet}  & {}}}
\def\CoxeterDiagII{{*={\bullet} \ar@{-}[r] & *={\bullet} \ar@{-}[rd] \ar@{-}[rr] & {} & *={\bullet} \ar@{-}[ld] & *={\bullet} \ar@{-}[l] \\
{} & {} & *={\bullet}  & {} &  {}}}
\def\CoxeterDiagIII{{*={\bullet} \ar@{-}[r] & *={\bullet} \ar@{-}[r] \ar@{-}[d]& *={\bullet} \\
*={\bullet} \ar@{-}[r] & *={\bullet} \ar@{-}[r] & *={\bullet}}}

\section{Introduction}

The structure of Coxeter groups is quite well understood when the group acts simplicially on spherical, affine or hyperbolic spaces. However, the vast majority of Coxeter groups do not have such an action, and their inner structure is more challenging. For example, there is no systematic study of commensurability of Coxeter groups (commensurability was studied in \cite{Dotti} for hyperbolic Coxeter groups, and in \cite{DST} for right-angled Coxeter groups). Deodhar \cite{Deodhar} and Dyer \cite{Dyer} proved that a subgroup of a Coxeter group, generated by reflections, is a Coxeter group. Finite-index reflection subgroups were studied in \cite{Felikson1}, while \cite{Felikson2} gives a criterion in the odd-angled case.

In this paper we study simply-laced Coxeter groups whose Coxeter diagrams are line graphs (see \Ssref{ssec:lg}). Such groups are endowed with a natural action on a finite set. Our goal is to describe the group structure of the point stabilizer of this action. When the Coxeter diagram is the line graph of a tree, the stabilizer is itself a Coxeter group, providing infinitely many examples of finite-index embeddings of Coxeter groups (which are not odd-angled).
For example, there is an embedding
$$W(\raisebox{2.2ex}{\xymatrix@R=8pt@C=1pt\CoxeterDiagI}) \ \hra \ W(\raisebox{2.2ex}{\xymatrix@R=8pt@C=4pt\CoxeterDiagII}),$$
as a subgroup of index $6$, where $W(\cdot)$ is the Coxeter group associated to a given diagram (take $\Gamma = \EE_6$ in \Tref{nice}).

\medskip

When $\Gamma$ is a connected graph, the transpositions corresponding to the edge set $E(\Gamma)$ generate the full symmetric group. In fact, $\Gamma$ defines a simply-laced Coxeter group $C(\Gamma)$, whose Coxeter diagram is the line graph of~$\Gamma$. What we obtain in this manner is a Coxeter group with a transpositional action on the set of vertices (\Ssref{ssec:ta}). A presentation of the symmetric group on this set of generators was given in \cite{Serg, Sol}.

Let $n$ denote the number of vertices of the graph $\Gamma$.
The map $C(\Gamma) \ra S_n$ splits through a quotient $\CY(\Gamma)$, introduced in~\cite{Cox}. This group was shown there to be a semidirect product $S_n \ltimes F_{t,n}$, where $F_{t,n}$ is a subdirect product of free groups. The associated Artin group, $\AY(\Gamma)$, was defined in~\cite{Artin} when $\Gamma$ is a planar graph, and shown there to be the semidirect product $B_n \ltimes \hat{F}_{t,n}$, where $\hat{F}_{t,n}$ is a finite central extension of~$F_{t,n}$. The ability to solve the word problem in these  groups allows one to compute the fundamental groups of Galois covers of surfaces, see \cite{Merav8} and many references therein. By contrast, $C(\Gamma)$ is almost never a semidirect product, which again makes this group more complicated to handle.

\medskip

\noindent
Denote by $C(\Gamma)_o$ the stabilizer of some vertex $o$. Here are our main results.
\begin{enumerate}
\item We give a presentation for $C(\Gamma)_o$ (\Pref{thepres}). For example, it follows that the abelianization of $C(\Gamma)_o$ is determined by the rank of $\pi_1(\Gamma)$ (\Tref{abt}).
\item For a tree $\Sigma$, we show that $C(\Sigma)_o$ is a reflection subgroup, identified as a Coxeter group (\Tref{main-1}), with a concrete Concrete diagram (\Tref{nice}). We thus obtain an infinite family of finite-index embeddings of Coxeter groups. Furthermore, in infinitely many cases both groups are simply-laced (\Eref{ex69}).
\item When $\Gamma$ is unicyclic, $C(\Gamma)_o$ admits a relative conjugation presentation over a Coxeter group: it is obtained by adjoining one generator and imposing explicit conjugation relations (\Cref{caseuni}). In the special cases where $\Gamma$ is a cycle or a cycle with a tail, $C(\Gamma)_o$ is in fact an HNN extension of a Coxeter group; in the latter case the relevant parabolic subgroups are explicitly presented (\Cref{O--}).
\item In general, $C(\Gamma)_o$ admits an iterated relative conjugation presentation beginning with a Coxeter group (\Tref{thegen}). When $\Gamma$ has a separating spanning subtree, all the additional generators and conjugation relations can be introduced simultaneously (\Tref{thethin}).
\end{enumerate}

\bigskip
The main characters of the show, $C(\Gamma)$ and the stabilizer $C(\Gamma)_o$, are introduced in \Sref{sec2}, where we point out that the Coxeter diagram of $C(\Gamma)$ is the line graph~$\Line{\Gamma}$ of~$\Gamma$. In \Sref{sec3} we consider a tree $\Sigma$. We define the Coxeter diagram $\covg{\Sigma}$, with a natural projection $\Cover \ra W(\Line{\Line{\Sigma}})$, and find a system of well-defined maps $\Cover \ra C(\Sigma)$.
In \Sref{sec:RM} we rephrase the Reidemeister-Schreier algorithm, which extracts from a presentation of a group a presentation of a subgroup, for the case where the subgroup is a point stabilizer. This is utilized in \Srefs{sec5}{sec:pres} to compute our groups $C(\Gamma)_o < C(\Gamma)$. The isomorphism $C(\Sigma)_o \isom \Cover$ is established in \Sref{sec:6}.
In \Sref{sec:ex} we study the unicyclic case. Finally, in \Sref{sec8}, we show that for any graph $\Gamma$, $C(\Gamma)_o$ admits an iterated relative conjugation presentation beginning with a Coxeter group.

\subsection*{Glossary}

Throughout the paper, $\Gamma$ is a finite connected simple graph, and $\Sigma$ a tree, often a spanning subtree of $\Gamma$. The set of vertices is denoted $V(\Gamma)$, and the set of edges~$E(\Gamma)$. The line graph of $\Gamma$ (\Ssref{ssec:lg}) is denoted $\Line{\Gamma}$. The path graph with $n$ vertices is $\path[n]$; 
the cycle graph with $n$ vertices is $\cyc[n]$. We also refer to $\triangle = \cyc[3]$ and the fork graph~$\Y$. The diagram $\covg{\Sigma}$, obtained from $\Line{\Line{\Sigma}}$, the line graph of $\Line{\Sigma}$, by adding some edges, is defined in \Ssref{ss:covg}.

The Coxeter group of a Coxeter diagram $\Gamma$ is denoted $W(\Gamma)$. For our main object of study, the group $C(\Gamma)$, see \Dref{CT}; this group is endowed with a natural action on $V(\Gamma)$, and $C(\Gamma) = W(\Line{\Gamma})$ by \Cref{C=WL}. By $\GG_0$ we denote in \Sref{sec:RM} the stabilizer of the action of a group $G$; the presentation of $\GG_0(\Sigma,\Gamma)$ is given in \Pref{thepres}; and $\GG_0(\Sigma) = \GG_0(\Sigma,\Sigma)$.
The maps $\Phi_o$ and $\Psi_o$ are defined in \Ssref{ss:Phi} and \Eq{Psidef}, respectively; $\Theta$ is defined in \Pref{Thetadef}.

\smallskip

{\bf{AI transparency declaration}}. \texttt{ChatGpt 5.6-Pro} suggests, and we accept: ``Most of this paper was written before strong \texttt{AI} models were publicly available. During the final stages of revision, the authors used \texttt{ChatGPT} as a critical reader, mainly to suggest improvements in exposition and wording, to help position the work in relation to the literature, and to flag possible mathematical errors or inconsistencies. All such suggestions were independently examined by the authors, who take full responsibility for the final text and all mathematical claims.''

\section{Coxeter groups of line graphs}\label{sec2}

Let $\Gamma$ be any finite simple graph. Label the vertices by $1,\dots,n$. To an edge $e = \set{i,j}$, associate the transposition $(i\,j) \in S_n$. If $\Gamma$ is connected, then the transpositions associated to the edges generate the full symmetric group $S_n$.

\begin{thm}[\cite{Serg, Sol}]\label{Serg}
Let $\Gamma$ be a connected simple graph on $n$ vertices. Consider the group whose generators are the edges of $\Gamma$, with
five families of defining relations:
\begin{enumerate}
\item[(1)] $e^2 = 1$ for every edge $e$;
\item[(2)] $(ee')^2 = 1$ if the edges $e,e'$ are disjoint;
\item[(3)] $(ee')^3 = 1$ if the (distinct) edges $e,e'$ have a common vertex;
\item[(4)] $[e,e'e''e'] = 1$ if $e,e',e''$ share a common vertex; and
\item[(5)] $e_1\cdots e_{n-1} = e_2 \cdots e_n$ for any simple cycle $e_1,\dots,e_n$.
\end{enumerate}
Then, projecting each edge to the associated transposition, is an isomorphism of the group with $S_n$.
\end{thm}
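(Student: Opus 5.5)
Let $G(\Gamma)$ denote the group presented in the statement, and $\pi\colon G(\Gamma)\to S_n$ the map sending each edge to its transposition. The plan is to show that $\pi$ is well defined, that it is onto, and that $|G(\Gamma)|\le n!$. Together these force $\pi$ to be an isomorphism.

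\emph{Well-definedness.} We check each relation on transpositions.
\begin{enumerate}
\item[(1)] A transposition is an involution.
\item[(2)] Disjoint transpositions commute.
\item[(3)] Two transpositions sharing a point multiply to a $3$-cycle.
\item[(4)] For edges $e=\set{v,a}$, $e'=\set{v,b}$, $e''=\set{v,c}$ at a common vertex $v$, we have $e'e''e'=(a\,c)$ (or $e'e''e'=e''$ if $e'=e''$). This transposition is disjoint from $(v\,a)$ when $a\neq c$, and equals it otherwise, so it commutes with $(v\,a)$.
\item[(5)] For a simple cycle $v_1,\dots,v_n$ with $e_i=\set{v_i,v_{i+1}}$, both $e_1\cdots e_{n-1}$ and $e_2\cdots e_n$ evaluate to the same $n$-cycle on the cycle's vertices. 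This is a direct computation, and the main thing to get right is the orientation convention.
\end{enumerate}

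\emph{Surjectivity.} Fix a spanning tree $\Sigma$ of $\Gamma$. Its edges give transpositions whose transposition graph is connected, so they generate $S_n$.

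\emph{The order bound.} This is the heart of the proof, and we argue by induction on $n$.

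First we reduce to the spanning tree $\Sigma$. Each edge $f\notin\Sigma$ closes a unique fundamental cycle $f,e_2,\dots,e_m$ with $e_2,\dots,e_m\in\Sigma$. Relation (5) rewritten gives $f=e_2\cdots e_m\,(e_2\cdots e_{m-1})^{-1}$, or an equivalent conjugation expression. Hence $G(\Gamma)$ is generated by the tree edges. One should check that this rewriting uses relation (5) with $f$ placed first, so that it isolates $f$.

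Next we run the induction using a leaf. Choose a leaf $\ell$ of $\Sigma$ with tree edge $e_\ell$, and let $H$ be the subgroup generated by the edges not touching $\ell$. The graph $\Gamma-\ell$ is connected, and its relations are among those of $\Gamma$. So $H$ is a quotient of $G(\Gamma-\ell)$, and by induction $|H|\le (n-1)!$.

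It remains to show that $[G(\Gamma):H]\le n$. The approach is to exhibit $n$ cosets $H,\ He_\ell,\ He_\ell t_1,\dots$ along paths of $\Sigma$ from the neighbor of $\ell$, and show that this set is closed under right multiplication by the tree generators. Closure comes from commutation relations (2) and braid relations (3). Relation (4) handles branching at vertices of degree at least $3$: it lets a generator at the far end of a branch be moved past, i.e., conjugated into $H$.

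\emph{Main obstacle.} The hard part is the coset-closure bookkeeping. A cleaner alternative is to use relations (2)--(4) to show that the tree-edge group is a quotient of the Coxeter group $W(\Line{\Sigma})$ with the extra relations (4), and then prove directly that these relations kill everything beyond $S_n$, i.e., that $G(\Sigma)\cong S_n$. The non-tree edges then add nothing further, because by the rewriting above each of them is already expressed in the tree edges.
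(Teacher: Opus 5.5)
The paper does not prove this theorem; it is quoted from \cite{Serg, Sol}, with a pointer to \cite[Section~2]{Cox}, so your proposal has to stand on its own. Its architecture is sound: check the relations in $S_n$, prove surjectivity, then get $|G(\Gamma)|\le n!$ by induction on a leaf $\ell$ of a spanning tree $\Sigma$, using $|H|\le (n-1)!$ and $[G(\Gamma):H]\le n$. Your use of (5) to write a non-tree edge as $f=e_2\cdots e_{m-1}e_me_{m-1}\cdots e_2$ is also correct.

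\textbf{The gap.} The index bound is where all the content lies, and it is only announced. You call it bookkeeping and do not carry it out. Your ``cleaner alternative'' of proving $G(\Sigma)\cong S_n$ directly is the tree case of the very statement, so it is circular. As written, nothing shows that (1)--(4) cut the group down to order $n!$. Relations (1)--(3) alone define $C(\Sigma)$, which is infinite unless $\Sigma$ is a path, so the argument must visibly use (4).

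\textbf{The check of (4) is also wrong.} For $e=\set{v,a}$, $e'=\set{v,b}$, $e''=\set{v,c}$ one has $e'e''e'=(b\,c)$, not $(a\,c)$; and $(a\,c)$ would not commute with $(v\,a)$. Moreover (4) must be read for three distinct edges. For instance, $e=e'$ gives $[(v\,a),(a\,c)]\neq 1$ in $S_n$, so the degenerate cases cannot be ``verified''.

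\textbf{How to close the gap.} The missing step is short.
For $v\in V(\Gamma)$, let $\ell=v_0,v_1,\dots,v_k=v$ be the path in $\Sigma$. Put $u_i=\set{v_{i-1},v_i}$ (so $u_1=e_\ell$), $p_v=u_1\cdots u_k$ (with $p_\ell=1$), and $X=\set{Hp_v\suchthat v\in V(\Gamma)}$. Let $e$ be a tree edge. There are four cases.
\begin{enumerate}
\item If $e$ has no vertex on the path, then $e\in H$ and $p_ve=ep_v$ by (2).
\item If $e=u_j$ with $j<k$, then (2) and (3) give $p_vu_j=u_{j+1}p_v$, and $u_{j+1}\in H$.
\item If $e=u_k$, then $p_ve=p_{v_{k-1}}$. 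If $e$ contains $v_k$ but is not on the path, then $p_ve=p_{v'}$, where $v'$ is the other end of $e$.
\item Otherwise $e$ meets the path in exactly one vertex $v_i$ with $1\le i\le k-1$. It cannot meet $v_0$, since $\ell$ is a leaf, and cannot meet the path twice, since $\Sigma$ is a tree. By (2), $e$ moves past $u_k,\dots,u_{i+2}$. Then $u_{i+1}e=g\,u_{i+1}$ with $g=u_{i+1}eu_{i+1}\in H$. By (4), applied to the distinct edges $u_i,u_{i+1},e$ at $v_i$, $g$ commutes with $u_i$; by (2) it commutes with $u_1,\dots,u_{i-1}$. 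Hence $p_ve=g\,p_v$.
\end{enumerate}
So $Xe\subseteq X$ for every tree edge $e$. These edges are involutions generating $G(\Gamma)$, so $Hg\in X$ for all $g$, that is, $[G(\Gamma):H]\le n$.

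Your rewriting of the non-tree edges already exhibits $G(\Gamma)$ as a quotient of $G(\Sigma)$: the tree edges generate it and satisfy the tree relations. You may therefore run this induction for trees only, where (5) is vacuous. This also makes clear that (5) serves only to eliminate the non-tree generators.
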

See \cite[Section~2]{Cox} for more details on this presentation.

\smallskip
\subsection{The Coxeter group $C(\Gamma)$}

\begin{defn}\label{CT}
Let $\Gamma$ be a finite simple graph. The group generated by the edge set~$E(\Gamma)$, subject to the relations (1)--(3) of \Tref{Serg}, is denoted $C(\Gamma)$, and called the {\bf{Coxeter group~of~$\Gamma$}}.
\end{defn}

For example, if $\Gamma$ is the {\bf{path graph}} $\path$, with $n$ vertices $1$---$2$---$\,\cdots$---$n$, then $C(\Gamma) \isom S_n$ by the classical Coxeter generators, and indeed there are no instances of relations (4) or (5).

\begin{exmpl}\label{DY}
As another example, for the graphs $\triangle$ and $\Y$ depicted in \Fref{basic}, both $C(\triangle)$ and $C(\Y)$ have the presentation
$$\sg{x,y,z \subjectto x^2=y^2=z^2=(xy)^3=(yz)^3=(zx)^3=1},$$
because in each case the three edges intersect in pairs. The graph $\Y$, isomorphic to the complete bipartite graph $K_{1,3}$, is commonly known as the {\bf{claw}}.
\end{exmpl}

\begin{figure}
$\xymatrix@C=6pt@R=6pt{
{} & *={\bullet} \ar@{-}[ddl]|x \ar@{-}[ddr]|y & {} & {} & *={\bullet} \ar@{-}[dr]|x & {} & *={\bullet} \ar@{-}[dl]|z \\
{} & {} & {} & {} & {} & *={\bullet} \ar@{-}[d]|y & {} \\
*={\bullet} \ar@{-}[rr]|z & {} & *={\bullet} & {} & {} & *={\bullet} & {}
}$
\caption{The graphs $\triangle$ and $\Y$, with edges labelled} \label{basic}
\end{figure}

\begin{exmpl}
A Coxeter group is {\bf{odd-angled}} when the order of any product of generators $ee'$ is odd. The only connected simple graphs $\Gamma$ for which $C(\Gamma)$ is odd-angled are the {\bf{stars}} (where all edges intersect in a joint vertex), and $\triangle$.
\end{exmpl}

\begin{rem}
If $\Gamma$ is not connected, say a disjoint union of subgraphs $\Gamma = \Gamma_1 \cup \Gamma_2$, then $C(\Gamma) = C(\Gamma_1) \times C(\Gamma_2)$.
\end{rem}

\smallskip
\subsection{Line graphs}\label{ssec:lg}

Recall that for any graph $\Gamma$, the {\bf{line graph}} of $\Gamma$ is the graph~$\Line{\Gamma}$ whose {\emph{vertices}} are the edges of~$\Gamma$, and whose {\emph{edges}} are the pairs of intersecting edges in~$\Gamma$. For example, both $\Line{\triangle}$ and $\Line{\Y}$ are isomorphic to~$\triangle$. The line graph of the path $\path[4] = \xymatrix@C=16pt{*={\bullet}\ar@{-}[r]^x & *={\bullet}\ar@{-}[r]^y & *={\bullet}\ar@{-}[r]^z & *={\bullet}}$\ \ is the path
$\path[3] = \xymatrix@C=12pt{x\ar@{-}[r] & y\ar@{-}[r] & z}$, and more generally
$\Line{\path[n]} = \path[n-1]$.
The line graph of a connected graph is connected. By Whitney's graph isomorphism theorem \cite{Whit}, the line graph of a connected graph determines the graph, except for the colliding pair $\triangle,\Y$.

Beineke \cite{Bein} showed that a graph is a line graph if and only if it does not contain, as an induced subgraph, any of nine forbidden graphs, whose list is known. The list includes the claw. In particular, the only trees that can be realized as line graphs are paths.

\begin{rem}\label{!}
The Monster, and many other sporadic simple groups, are presented as quotients of Coxeter groups with graphs that are shaped as generalized~$\Y$ (see the section on the $\Y$-groups in the ATLAS, \cite[pp.~232--233]{ATLAS}). This may hint that Coxeter groups of line graphs are fundamentally more accessible than the general case.
\end{rem}

Let $\dist[\Gamma]{\cdot}{\cdot}$ denote the distance function of vertices in $\Gamma$. If $e,f$ are edges, let $\dist[\Gamma]{e}{f}$ be the minimal distance between vertices of $e$ and $f$. The following is well-known:
\begin{rem}\label{mydist}
For any two edges $e \neq f$ of $\Gamma$, viewed as vertices of $\Line{\Gamma}$, $\dist[\Line{\Gamma}]{e}{f} = \dist[\Gamma]{e}{f}+1$.
\end{rem}

\smallskip
\subsection{The Coxeter diagram of $C(\Gamma)$}

Let $S$ be the set of generators of a Coxeter group $W$. Recall that the {\bf{Coxeter diagram}} of the pair $(W,S)$ is a labelled graph on the vertex set $S$ encoding the defining relations of $W$, by connecting $s_1,s_2 \in S$ by an edge labelled by the order $o(s_1s_2)$ of $s_1s_2$ in~$W$. The elements of $S$ are assumed to be distinct, so the label $1$ is forbidden. The label $3$ is omitted, and an edge with the label $2$ is omitted altogether. Thus the Coxeter diagram is a simple unlabelled graph precisely when all the  products~$s_1s_2$ have order~$2$ or~$3$. In this case the group is called {\bf{simply-laced}}.

Recall that if $\Gamma$ is a Coxeter diagram, the associated Coxeter group is denoted by~$W(\Gamma)$. The generators of $W(\Gamma)$ correspond to the vertices of $\Gamma$, with the relations $s_i^2 = 1$ and $(s_i s_j)^{n_{ij}} = 1$ where $n_{ij}$ is the label on the edge connecting~$s_i$ and~$s_j$ (with $n_{ij} = 3$ for an edge with no label, and $n_{ij} = 2$ when there is no edge).

For example, if $\Gamma$ is any graph, one may define a Coxeter group $W(\Line{\Gamma})$ from the line graph $\Line{\Gamma}$, which would be generated by the edges of $\Gamma$ with three types of relations: $e^2 = 1$; $(ee')^2 = 1$ when the edges are not connected by an edge in $\Line{\Gamma}$; and $(ee')^3 = 1$ when the edges are connected in $\Line{\Gamma}$. But the latter conditions are equivalent to $e,e'$ being disjoint or intersecting in $\Gamma$, respectively.
\begin{cor}\label{C=WL}
Our group $C(\Gamma)$ from \Dref{CT} is the Coxeter group $W(\Line{\Gamma})$.
\end{cor}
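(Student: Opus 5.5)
The plan is to compare the two presentations generator by generator and relation by relation. By \Dref{CT}, $C(\Gamma)$ is generated by $E(\Gamma)$ subject to relations (1)--(3) of \Tref{Serg}. By definition, $W(\Line{\Gamma})$ is generated by the vertex set of $\Line{\Gamma}$, which is again $E(\Gamma)$. Its relations are $s^2=1$, $(ss')^3=1$ when $s,s'$ are adjacent in $\Line{\Gamma}$, and $(ss')^2=1$ when they are not. I will use the identity map on $E(\Gamma)$ and check that the two relation sets are literally the same.

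The checks go as follows.
\begin{enumerate}
\item The relation $e^2=1$ appears in both presentations.
\item For distinct edges $e,e'$, the definition of the line graph in \Ssref{ssec:lg} says that $e,e'$ are adjacent in $\Line{\Gamma}$ exactly when they share a vertex in $\Gamma$. So the braid relations $(ee')^3=1$ of $W(\Line{\Gamma})$ coincide with relation (3).
\item Since $\Gamma$ is simple, two distinct edges share at most one vertex. Hence distinct edges are either intersecting or disjoint, and the pairs that are non-adjacent in $\Line{\Gamma}$ are exactly the disjoint pairs. The commuting relations $(ee')^2=1$ therefore coincide with relation (2).
\end{enumerate}
Both groups are thus defined by the same generators and the same relations, so the identity on $E(\Gamma)$ induces an isomorphism, and $C(\Gamma)=W(\Line{\Gamma})$ with its standard Coxeter generators.

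I expect no real obstacle here. The only points that need care are the case analysis on pairs of distinct edges, which uses the simplicity of $\Gamma$, and the convention that $W(\cdot)$ of an unlabelled diagram assigns $m=3$ to edges and $m=2$ to non-edges.

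A further point worth recording is that $(C(\Gamma),E(\Gamma))$ is a genuine Coxeter system, with the orders of the products $ee'$ being exactly $2$ or $3$ and distinct edges giving distinct elements. This follows from the standard fact that in any Coxeter group presented this way, the generators are distinct and each product $ss'$ has order exactly $m_{ss'}$.
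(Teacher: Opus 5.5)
Your proposal is correct and follows the same route as the paper. The paper's argument likewise identifies both generating sets with $E(\Gamma)$ and observes that adjacency and non-adjacency in $\Line{\Gamma}$ correspond exactly to intersecting and disjoint edges of $\Gamma$, so relations (1)--(3) are precisely the Coxeter relations of $W(\Line{\Gamma})$.
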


\begin{exmpl}\label{cn} 
Let $\cyc$ denote a cycle graph on $n$ vertices (so that $\triangle = \cyc[3]$). Thus $\Line{\cyc} \isom \cyc$, and the group $C(\cyc) = W(\cyc)$ is the Coxeter group of type $\tilde{A}_{n-1}$, isomorphic to $S_n \semidirect (\Z^{n})_0$ under the action on indices, where $(\Z^n)_0$ is the submodule of $\Z^n$ of zero-sum vectors.
\end{exmpl}

We come back to this example below.

\smallskip
\subsection{Further properties of $C(\Gamma)$}

There is a classical classification of the Coxeter groups acting simplicially on the sphere, the Euclidean space, or the hyperbolic space (\eg\ \cite[Thm.~2.4.0.5]{Cash} and the tables therein). We note that $C(\Gamma)$ rarely has such an action.
\begin{prop}
Let $\Gamma$ be a connected simple graph. The group $C(\Gamma)$ acts simplicially ---
\begin{enumerate}
\item on a sphere, if and only if $\Gamma = \path[n]$ ($n\geq 2$);
\item on a Euclidean space, if and only if $\Gamma = \Y$ or $\Gamma = \cyc[n]$ ($n\geq 3$), in which case $C(\Gamma)$ has type $\tilde{A}_2$ or $\tilde{A}_{n-1}$;
\item on a hyperbolic space, never cocompactly.
\end{enumerate}
\end{prop}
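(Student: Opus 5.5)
By \Cref{C=WL}, $C(\Gamma)=W(\Line{\Gamma})$ with the standard generators, so each part reduces to asking when the simply-laced diagram $\Line{\Gamma}$ appears in the classical lists (spherical, affine, compact hyperbolic) in \cite[Thm.~2.4.0.5]{Cash}. Since $\Line{\Gamma}$ is an unlabelled simple graph, its bilinear form is $B=2I-A$, where $A$ is the adjacency matrix of $\Line{\Gamma}$. Simplicial actions correspond to $B$ being positive definite (sphere), positive semidefinite of corank one with connected diagram (Euclidean), or of signature $(n-1,1)$ with all proper subdiagrams spherical (compact hyperbolic, Lann\'er) or spherical/affine (hyperbolic simplex). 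Connectedness of $\Line{\Gamma}$ follows from that of $\Gamma$.

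For (1), the connected simply-laced spherical diagrams are $A_m$, $D_m$ and $E_{6,7,8}$, all trees. By Beineke, a tree that is a line graph must be a path, since $D_m$ and $E_k$ contain a claw. So $\Line{\Gamma}\isom\path[m]$. By Whitney, together with a direct check for small cases, $\Gamma=\path[m+1]$; the case $\Line{\Gamma}=\path[1]$ gives $\Gamma=\path[2]$. Conversely, $C(\path[n])=S_n$ is finite.

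For (2), the simply-laced affine diagrams are $\tilde A_m$ (cycles, with $\tilde A_1$ excluded since it needs label $\infty$), $\tilde D_m$ and $\tilde E_{6,7,8}$. The latter three are trees that are not paths, so they contain a claw and are not line graphs. A cycle $\cyc[m]$, $m\ge3$, is the line graph of $\cyc[m]$, and for $m=3$ also of $\Y$ by Whitney's exceptional pair. This gives $\Gamma=\Y$ with type $\tilde A_2$, or $\Gamma=\cyc[n]$ with type $\tilde A_{n-1}$, as in \Eref{cn}.

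For (3), the plan is to check the compact hyperbolic simply-laced simplex groups, which is the main obstacle. Lann\'er diagrams occur only in ranks $3$ to $5$, and the simply-laced ones with labels only $3$ have every proper subdiagram spherical, that is, of type $A$, $D$ or $E$. I would argue directly, without relying on tables. If $\Line{\Gamma}$ contains a cycle, then removing a vertex not on some chordless cycle leaves an affine $\tilde A$ subdiagram, unless the graph is itself a cycle, and cycles are affine. If $\Line{\Gamma}$ is a tree, it is a path, which is spherical. So a connected $\Line{\Gamma}$ with all proper connected subdiagrams spherical is a path or a cycle, and hence never compact hyperbolic.

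More generally, a cocompact hyperbolic reflection group with this Coxeter system would need a fundamental polytope, and Vinberg's criterion forbids affine parabolic subdiagrams, that is, $\tilde A$ cycles, in the cocompact case. So I would show that a non-path, non-cycle $\Line{\Gamma}$ either contains a chordless cycle of length at least $4$ as a proper subdiagram, giving an affine subdiagram, or consists of triangles. In the triangle case I would exhibit an affine subdiagram ($\tilde A_2$ from a triangle) or a $\tilde D_4$-like configuration. The triangle case needs care: $\tilde A_2$ is itself affine, so any triangle in a larger diagram already forbids cocompactness. Hence every $\Line{\Gamma}$ other than a path or a cycle contains a triangle or a longer chordless cycle as a proper subdiagram, which gives a contradiction. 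When $\Line{\Gamma}$ is a tree, it is a path and hence spherical.
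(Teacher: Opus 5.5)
Your proposal is correct. For (1) and (2) it is what the paper intends. The paper gives no argument beyond pointing to the classical classification tables, and matching the simply-laced spherical diagrams ($A,D,E$) and affine diagrams ($\tilde A,\tilde D,\tilde E$) against line graphs, via claw-freeness and Whitney's theorem, is exactly that comparison.

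For (3) the route differs slightly. The table route would note that no Lann\'er diagram is simply-laced. With labels in $\{2,3\}$, rank $3$ gives only spherical diagrams or the affine $\tilde A_2$, and every Lann\'er diagram of rank $4$ or $5$ carries a label $4$ or $5$. You instead show directly that a connected line graph all of whose proper subdiagrams are spherical must be a path or a cycle. A shortest cycle is induced, so it is an affine $\tilde A$ subdiagram that is proper unless it is the whole diagram; and a line graph that is a tree is a path. This is self-contained and avoids the Lann\'er list, which is a small gain.

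Your second paragraph, on arbitrary compact Coxeter polytopes, is not needed, since the statement concerns simplicial actions. Its conclusion is right (a compact hyperbolic Coxeter polytope has no parabolic subdiagram). However, the ``$\tilde D_4$-like configuration'' cannot occur, since $\tilde D_4=K_{1,4}$ contains an induced claw. If you want the non-simplicial statement, there is a cleaner route: every infinite $C(\Gamma)$ contains $\Z^2$, as the paper shows when discussing hyperbolicity, so it cannot act properly and cocompactly on any $\mathbb{H}^d$.
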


By \cite[Thm.~5.5.1.1]{Cash}, we get:
\begin{cor}
Let $\Gamma$ be a connected simple graph. Then $C(\Gamma)$ is:
\begin{enumerate}
\item finite -- when $\Gamma = \path[n]$ ($n\geq 1$);
\item virtually abelian -- when $\Gamma = \Y$ or $\Gamma = \cyc[n]$ ($n \geq 3$);
\item has free nonabelian subgroups -- in any other case.
\end{enumerate}
\end{cor}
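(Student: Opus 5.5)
The statement to prove is the last corollary: $C(\Gamma)$ is finite, virtually abelian, or contains free nonabelian subgroups. The plan is to combine the preceding proposition with \Cref{C=WL}, which identifies $C(\Gamma)=W(\Line{\Gamma})$, and then apply the trichotomy \cite[Thm.~5.5.1.1]{Cash} for Coxeter groups. That trichotomy says an irreducible Coxeter group is exactly one of the following: finite (spherical type), virtually abelian (affine type), or containing a nonabelian free subgroup. Since $\Gamma$ is connected, $\Line{\Gamma}$ is connected, so $W(\Line{\Gamma})$ is irreducible and the trichotomy applies directly. The remaining work is to locate $\Line{\Gamma}$ in the classification.

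For the spherical case, a connected simply-laced Coxeter diagram is spherical iff it is $A_m$, $D_m$ or $E_6,E_7,E_8$. The last four are trees containing a claw, so by Beineke's theorem they are not line graphs. Hence $\Line{\Gamma}$ must be a path $\path[m]$. By Whitney's theorem, together with a direct check for very small graphs, this forces $\Gamma=\path[m+1]$. This recovers part (1) of the proposition, and then $C(\path[n])\isom S_n$ is finite, which also covers $n=1$ where the group is trivial.

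For the affine case, a connected simply-laced affine diagram is $\tilde A_{m}$ (a cycle, with $m\ge 2$), $\tilde D_m$, or $\tilde E_{6,7,8}$. All of the latter are trees with a vertex of degree at least $3$, hence contain a claw and are not line graphs. So $\Line{\Gamma}\isom\cyc[m]$. The graphs whose line graph is $\cyc[m]$ are $\cyc[m]$ itself and, when $m=3$, also $\Y$, by Whitney's theorem and the colliding pair. This gives case (2), and \Eref{cn} confirms that the group there is $\tilde A_{n-1}$, which is virtually $\Z^{n-1}$. In every remaining case $W(\Line{\Gamma})$ is neither spherical nor affine, so the cited theorem gives a free nonabelian subgroup.

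The step needing most care is that the cited trichotomy is phrased in terms of the diagram type, or the signature of the Tits form, rather than in terms of simplicial actions. I would therefore verify that "acts simplicially on a sphere or Euclidean space" in the proposition matches positive definite or positive semidefinite type for irreducible diagrams. If the cited theorem is instead stated for all non-affine infinite irreducible Coxeter groups, the classification above suffices without further argument.
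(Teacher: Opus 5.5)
Your proposal is correct and follows the paper's route. You identify $C(\Gamma)=W(\Line{\Gamma})$, note that it is irreducible because $\Gamma$ is connected, and apply the finite/affine/free-subgroup trichotomy the paper cites for Coxeter groups. The only difference is that you spell out the classification step: the diagrams $D_m$ ($m\geq 4$), $E_{6,7,8}$, $\tilde D_m$ and $\tilde E_{6,7,8}$ all contain an induced claw, so they are not line graphs, and Whitney's theorem then determines $\Gamma$. The paper instead folds this step into the preceding proposition and states it without proof.
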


\smallskip
\subsection{Parabolic subgroups}

Let $\Gamma$ be a finite simple graph, and $\Gamma' \sub \Gamma$ any subgraph. There is an obvious map from $C(\Gamma')$ to $C(\Gamma)$, sending the generators, namely the edges of $\Gamma'$, to the same edges in $\Gamma$ as elements of $C(\Gamma)$.

\begin{prop}\label{parabolic}
Let $\Gamma' \sub \Gamma$ be any subgraph.
Then $C(\Gamma')$ is isomorphic to the subgroup of $C(\Gamma)$ generated by the edges of $\Gamma'$.
\end{prop}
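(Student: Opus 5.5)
By \Cref{C=WL}, $C(\Gamma) = W(\Line{\Gamma})$ and $C(\Gamma') = W(\Line{\Gamma'})$. I plan to reduce the statement to the standard fact that the subgroup of a Coxeter group generated by a subset of the Coxeter generators is a Coxeter group, namely the one whose diagram is the induced subdiagram. This is the theorem on standard parabolic subgroups, due to Tits and Bourbaki: for $J \sub S$, the subgroup $W_J = \sg{J}$ is canonically isomorphic to the Coxeter group $(W_J, J)$ with the restricted Coxeter matrix.

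The first step is to compare the diagrams. The vertex set of $\Line{\Gamma'}$ is $E(\Gamma')$, a subset of $E(\Gamma)$, the vertex set of $\Line{\Gamma}$. I need $\Line{\Gamma'}$ to be the induced subgraph of $\Line{\Gamma}$ on this subset. Two edges $e,e' \in E(\Gamma')$ are adjacent in $\Line{\Gamma'}$ iff they share a vertex. Since their endpoints are the same whether viewed in $\Gamma'$ or in $\Gamma$, this holds iff they are adjacent in $\Line{\Gamma}$. Consequently the order $o(ee')$ prescribed by relations (2) and (3) of \Dref{CT} is the same in both presentations. This holds for any subgraph $\Gamma'$, not only induced ones: only the edges of $\Gamma'$ matter, and adjacency of two edges depends only on the edges themselves.

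The second step is to apply the parabolic subgroup theorem with $S = E(\Gamma)$ and $J = E(\Gamma')$. The obvious map $C(\Gamma') \ra C(\Gamma)$ is then the canonical map $W(\Line{\Gamma'}) = W(\text{induced subdiagram on } J) \ra W_J$, which is injective with image $\sg{J}$. If I want the argument self-contained, I would recall the standard proof. Surjectivity onto $\sg{J}$ is clear, so only injectivity needs work. For that, use the geometric (Tits) representation of $W(\Line{\Gamma})$ on $\mathbb{R}^{E(\Gamma)}$ with bilinear form $B(\alpha_e,\alpha_{e'}) = -\cos(\pi/m_{ee'})$. Restricting to the span of $\set{\alpha_e : e \in J}$ yields the geometric representation of $W(\Line{\Gamma'})$, which is faithful. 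Composing $C(\Gamma') \ra C(\Gamma) \ra GL$ gives this faithful representation, so the first map is injective.

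There is no real obstacle; the only point needing care is the first step, namely that $\Line{\cdot}$ commutes with passing to subgraphs as an induced subgraph even when $\Gamma'$ is not induced. One further check is that disconnected $\Gamma'$ poses no issue, since the Coxeter presentation handles products automatically.
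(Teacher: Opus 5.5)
Your proposal is correct and follows the paper's proof. Both go through \Cref{C=WL}, observe that $\Line{\Gamma'}$ is the full subgraph of $\Line{\Gamma}$ on $E(\Gamma')$ (for any subgraph, induced or not), and invoke the standard fact that a standard parabolic subgroup of a Coxeter group is the Coxeter group of the induced subdiagram. The paper simply cites that fact, whereas you also sketch its proof via the Tits representation: the image of $C(\Gamma')$ preserves the span of the $\alpha_e$, $e \in E(\Gamma')$, and acts there by the faithful geometric representation of $W(\Line{\Gamma'})$. That sketch is a valid self-contained substitute for the citation.
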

\begin{proof}
By \Cref{C=WL}, $C(\Gamma) = W(\Line{\Gamma})$ and $C(\Gamma') = W(\Line{\Gamma'})$. By definition, $\Line{\Gamma'}$ is the full subgraph of $\Line{\Gamma}$ induced by $\Gamma'$. Now $C(\Gamma) = W(\Line{\Gamma})$ is a Coxeter group generated by the edges of $\Gamma$, and the subgroup given above is the parabolic subgroup generated by the edges of $\Gamma'$. Since a parabolic subgroup is a Coxeter group \cite[1.10]{Humph}, $\sg{E(\Gamma')}$ is the Coxeter group defined by the Coxeter diagram associated to~$\Gamma'$, which is $\Line{\Gamma'}$; therefore $\sg{E(\Gamma')} = W(\Line{\Gamma'}) = C(\Gamma')$.
\end{proof}

We can thus define a {\bf{standard parabolic subgroup}} of $C(\Gamma)$ to be any subgroup generated by the edges of a subgraph. Adapting this proof to \cite[1.13]{Humph}, we also obtain:
\begin{prop}
The lattice of standard parabolic subgroups of $C(\Gamma)$ is isomorphic to the lattice of edge-subgraphs of~$\Gamma$.
\end{prop}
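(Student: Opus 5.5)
The plan is to follow the proof of \Pref{parabolic}, replacing the general fact about parabolic subgroups by the lattice statement \cite[1.13]{Humph}. For a Coxeter system $(W,S)$ that statement says that $I\mapsto W_I$ is a lattice isomorphism between the subsets of $S$ and the standard parabolic subgroups of $W$. Concretely, it says three things. First, $W_I\cap W_J=W_{I\cap J}$. Second, $\sg{W_I,W_J}=W_{I\cup J}$. Third, $W_I=W_J$ only if $I=J$; more precisely, $W_I\sub W_J$ if and only if $I\sub J$.

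By \Cref{C=WL} we have $C(\Gamma)=W(\Line{\Gamma})$, and its Coxeter generating set is $S=E(\Gamma)$, the vertex set of $\Line{\Gamma}$. So the standard parabolic subgroups of $C(\Gamma)$ are exactly the subgroups $W_I=\sg{I}$ for $I\sub E(\Gamma)$. By an ``edge-subgraph'' we mean a subgraph determined by its edge set. It consists of a set $I$ of edges together with the vertices they meet. Edge-subgraphs are ordered by inclusion, and their meet and join are given by intersecting and uniting edge sets. Hence the lattice of edge-subgraphs is canonically the Boolean lattice of subsets of $E(\Gamma)$.

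The map we use sends an edge-subgraph $\Gamma'$ to $\sg{E(\Gamma')}$. By \Pref{parabolic}, this subgroup is $C(\Gamma')$, so the map lands in standard parabolic subgroups. Composing the identification above with the Humphreys isomorphism $I\mapsto W_I$, the map is surjective, and it preserves and reflects inclusion, meets and joins. The map is also injective, because $W_I=W_J$ forces $I=J$.

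The only point requiring care is the following. Different subgraphs with the same edge set, for instance ones differing by isolated vertices, give the same parabolic subgroup. This is why the statement refers to edge-subgraphs rather than arbitrary subgraphs. One also needs the generators in $S=E(\Gamma)$ to be distinct elements of $C(\Gamma)$, which holds because $C(\Gamma)$ is a genuine Coxeter group. Beyond these two checks, nothing needs to be proved.
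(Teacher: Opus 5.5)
Your proposal is correct and takes essentially the paper's approach. The paper identifies $C(\Gamma)=W(\Line{\Gamma})$ with Coxeter generating set $E(\Gamma)$, as in the proof of the preceding proposition, and then applies the lattice isomorphism $I\mapsto W_I$ from Humphreys; your remarks on edge-subgraphs versus arbitrary subgraphs and on the distinctness of the generators just make explicit what the paper leaves implicit.
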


(On passing, we note that the commensurator of a parabolic subgroup in any Coxeter group is parabolic, \cite{Paris}.)

\begin{cor}\label{conji}
Let $\Gamma$ be a finite connected simple graph. Then any two edges of~$\Gamma$ are contained in a subgroup of~$C(\Gamma)$ isomorphic to some~$S_m$. In particular all the edges of~$\Gamma$ are conjugate in~$C(\Gamma)$.
\end{cor}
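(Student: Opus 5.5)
Given two edges $e,f$ of the connected graph $\Gamma$, I will find a path subgraph $\Gamma' \sub \Gamma$ that contains both, and then apply \Pref{parabolic} to it. Because $\Gamma$ is connected, there is a shortest path in $\Gamma$ from an endpoint of $e$ to an endpoint of $f$. I choose this path to realize $\dist[\Gamma]{e}{f}$, so it joins the nearest endpoints. Let $\Gamma'$ be the union of $e$, this path, and $f$.

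The only thing to check is that $\Gamma'$ is a path graph $\path[m]$ as an abstract graph, that is, as a subgraph with exactly these edges. Extra edges of $\Gamma$ among the vertices of $\Gamma'$ do not matter, because \Pref{parabolic} only involves the edges of $\Gamma'$.
\BIT
\item If $e=f$ there is nothing to prove.
\item If $e$ and $f$ share a vertex, then $\Gamma'=e\cup f$ is $\path[3]$.
\item Otherwise, minimality of the connecting path implies that its interior avoids the other endpoints of $e$ and $f$. Otherwise a shorter connection would exist, or $e$ and $f$ would be at distance smaller than assumed.
\EIT
In the last case, the other endpoint of $e$ does not lie on the path, so attaching $e$ at one end keeps $\Gamma'$ a path. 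The same holds for $f$. The two far endpoints are also distinct from each other: if they coincided, $e$ and $f$ would intersect, which was excluded. So $\Gamma' \isom \path[m]$ for some $m$. This small case check is the only point needing care; it is routine but is where the choice of a \emph{shortest} connection is used.

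By \Pref{parabolic}, the subgroup $\sg{E(\Gamma')}$ of $C(\Gamma)$ is isomorphic to $C(\path[m]) \isom S_m$, using the remark after \Dref{CT}. It contains both $e$ and $f$, which proves the first assertion.

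For the conjugacy statement, note that inside $S_m$ the edges $e$ and $f$ map to transpositions under this isomorphism, since the edges of a path correspond to the Coxeter generators $(i\;i{+}1)$. All transpositions are conjugate in $S_m$, so $e$ and $f$ are conjugate in $\sg{E(\Gamma')}$, hence in $C(\Gamma)$. Alternatively, and more directly, consecutive edges $e_i,e_{i+1}$ along the path satisfy $(e_ie_{i+1})^3=1$. Hence $e_{i+1}=(e_ie_{i+1})\,e_i\,(e_ie_{i+1})^{-1}$, since dihedral reflections of odd order are conjugate. Chaining these conjugations along $\Gamma'$ conjugates $e$ to $f$.
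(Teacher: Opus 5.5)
Your proof is correct and follows essentially the paper's argument: choose a simple path of edges in $\Gamma$ that begins with $e$ and ends with $f$, apply \Pref{parabolic} to identify the subgroup it generates with $C(\path[m]) \isom S_m$, and use the fact that all transpositions in $S_m$ are conjugate. Your shortest-path check that $e$, the connecting path and $f$ really form a simple path is a detail the paper leaves implicit, and your alternative conjugation via $(e_ie_{i+1})^3=1$ is also a valid way to get the second assertion.
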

\begin{proof}
Let $x,y$ be edges. Since $\Gamma$ is connected, there is a simple path $x = u_1, \dots, u_t = y$ in $\Gamma$. By \Pref{parabolic}, $\sg{u_1,\dots,u_t} \isom S_{t+1}$, a group in which $u_1,u_t$ are transpositions and thus conjugate. 
\end{proof}

There is a characterization of the finite subgroups of a Coxeter group \cite[Thm~3.2.4.17]{Cash}. For $C(\Gamma)$, these results can be formulated as follows.
\begin{prop}\label{maxfin}
A maximal finite subgroup of $C(\Gamma)$ is a direct product of symmetric groups. Up to conjugation, every such group is generated by a disjoint union of paths in $\Gamma$ (namely the sets of vertices are disjoint).
\end{prop}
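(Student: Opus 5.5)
We derive \Pref{maxfin} from the general characterization of finite subgroups of Coxeter groups, \cite[Thm~3.2.4.17]{Cash}. That result says every finite subgroup of a Coxeter group $(W,S)$ is contained in a conjugate of a finite standard parabolic subgroup $W_J$, $J \sub S$. Hence every maximal finite subgroup is conjugate to some maximal finite standard parabolic subgroup $W_J$. Here $W = C(\Gamma) = W(\Line{\Gamma})$ by \Cref{C=WL}, and $S = E(\Gamma)$.

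The plan has four steps.

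\textbf{Step 1: finite standard parabolics.} By \Pref{parabolic}, the standard parabolic subgroup generated by $J \sub E(\Gamma)$ is $C(\Gamma_J)$, where $\Gamma_J$ is the subgraph with edge set $J$. By the remark on disconnected graphs, $C(\Gamma_J)$ is the direct product of $C(\Gamma_i)$ over the connected components $\Gamma_i$ of $\Gamma_J$. By the corollary classifying when $C(\Gamma)$ is finite, a connected $C(\Gamma_i)$ is finite exactly when $\Gamma_i = \path[m]$, and then $C(\Gamma_i) \isom S_m$.

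It follows that $W_J$ is finite if and only if every component of $\Gamma_J$ is a path. Components are automatically vertex-disjoint, so such a $J$ is a disjoint union of paths in the required sense, and $W_J$ is a direct product of symmetric groups.

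\textbf{Step 2: maximal finite subgroups.} A maximal finite subgroup $H$ is contained in $gW_Jg^{-1}$ for some finite $W_J$. By maximality, $H = gW_Jg^{-1}$. This gives both claims of the proposition: $H$ is a direct product of symmetric groups, and up to conjugation it is generated by a disjoint union of paths.

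\textbf{Step 3: the converse direction.} It is not needed for the statement, but one could also check which disjoint unions of paths actually give maximal finite subgroups. This requires knowing when $W_J \sub gW_{J'}g^{-1}$. In Coxeter groups that forces $J$ to be conjugate into $J'$, which leads to comparing the multisets of path lengths. We only need the containment direction, so we avoid this.

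\textbf{Step 4: the one subtle point.} The main thing to verify is the precise form of the cited theorem: every finite subgroup lies in a conjugate of a finite parabolic. This is the standard fact, proved via the Tits cone and fixed points (or Bruhat–Tits). Once that is granted, the only local input is the finiteness criterion, which reduces to the classification in the corollary above.
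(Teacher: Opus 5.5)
Your proof is correct and follows the same route as the paper, which specializes the characterization of finite subgroups of Coxeter groups in \cite[Thm~3.2.4.17]{Cash} (every finite subgroup lies in a conjugate of a finite standard parabolic subgroup) to $C(\Gamma)=W(\Line{\Gamma})$ and then uses maximality to get equality. Your Step~1 is the translation the paper leaves implicit: via \Pref{parabolic}, a standard parabolic subgroup is finite exactly when the components of its edge set are paths. Your Step~3 is not needed for the statement.
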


\begin{exmpl}
For $\Sigma = \raisebox{2.0ex}{\xymatrix@C=12pt@R=12pt\CoxeterDiagIII}$\ , the maximal finite subgroups of $C(\Sigma)$ are isomorphic to $S_4$ and $S_3 \times S_3$.
\end{exmpl}

In this direction, note that since $C(\Gamma)$ is simply-laced, the $1$-skeleton of the nerve of~$C(\Gamma)$ is the complete graph, and so $C(\Gamma)$ is always $0$- or $1$-ended \cite[Thm~8.7.2]{Davis}.
Another important property shared by some Coxeter groups is hyperbolicity. By \cite[Theorem~5.4.2.1]{Cash}, a Coxeter group $W$ is hyperbolic if and only if its Davis complex is $\opname{CAT}(-1)$, if and only if $\Z^2 \not\hookrightarrow W$.

\begin{prop}
No infinite group of the form $C(\Gamma)$ is hyperbolic.
\end{prop}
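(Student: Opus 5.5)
By the criterion quoted just above, \cite[Theorem~5.4.2.1]{Cash}, it suffices to exhibit a subgroup $\Z^2 \hra C(\Gamma)$ whenever $C(\Gamma)$ is infinite. We may assume $\Gamma$ is connected. If $\Gamma$ is disconnected, then $C(\Gamma)$ is the direct product of the groups of its components. If two components have infinite groups, they contain infinite-order elements $a,b$, which generate a $\Z^2$; if only one component has an infinite group, we reduce to that component. Infinite-order elements exist in every infinite Coxeter group, which is standard.

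So let $\Gamma$ be connected with $C(\Gamma)$ infinite. By the Corollary above, $\Gamma$ is not a path. Hence $\Gamma$ contains either a cycle or a vertex of degree at least $3$, that is, a subgraph $\cyc[m]$ ($m\geq 3$) or a claw $\Y$. By \Pref{parabolic}, $C(\Gamma') \le C(\Gamma)$ for every subgraph $\Gamma'$, so it suffices to find $\Z^2$ inside $C(\cyc[m])$ and inside $C(\Y)$.

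First take $\Gamma' = \cyc[m]$. By \Eref{cn}, $C(\cyc[m]) \isom S_m \semidirect (\Z^m)_0$, whose translation subgroup $(\Z^m)_0 \isom \Z^{m-1}$ contains $\Z^2$ when $m\geq 3$. Next take $\Gamma' = \Y$. By \Eref{DY}, $C(\Y) = C(\triangle)$ is the affine group of type $\tilde A_2$, whose translation lattice is $\Z^2$. Either way $\Z^2 \hra C(\Gamma)$, so $C(\Gamma)$ is not hyperbolic.

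The step requiring the most care is the identification of the translation subgroup in \Eref{cn}. If one does not want to rely on it, explicit commuting elements can be written down in $C(\triangle)$. With generators $x,y,z$ as in \Eref{DY}, take $t_1 = xyzy$ and $t_2 = yzxz$. In the standard affine representation these are translations along two linearly independent roots, so they commute and together generate $\Z^2$. This can be verified via the geometric representation on the affine plane.
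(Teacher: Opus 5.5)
Your proof is correct and is essentially the paper's argument. A connected non-path $\Gamma$ contains a claw $\Y$ or a cycle $\cyc[m]$; by \Pref{parabolic} its group, of type $\tilde{A}_2$ or $\tilde{A}_{m-1}$, embeds in $C(\Gamma)$ and contains $\Z^2$. Your reduction for disconnected $\Gamma$, which the paper's proof tacitly skips, and your explicit commuting translations $xyzy$, $yzxz$ (the elements $\alpha,\beta$ of \Eref{DY2}) are sound additions.
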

\begin{proof}
Let $\Gamma$ be a connected simple graph. Assume there is no embedding $\Z^2 \hra C(\Gamma)$. Note that $C(\Y)$ and $C(\cyc[n])$ are of types $\tilde{A}_2$ or $\tilde{A}_{n-1}$, respectively. By the structure of parabolic subgroups in \Pref{parabolic}, and \Erefs{cn}{DY2} (below), $\Gamma$ has no subgroups of the form $\Y$ or $\cyc[n]$ ($n \geq 3$), so $\Gamma$ is a tree with vertices of degree at most~$2$. Thus $\Gamma = \path[n]$ for some~$n$, in which case $C(\Gamma) \isom S_{n+1}$ is finite.
\end{proof}

\smallskip
\subsection{Transpositional action}\label{ssec:ta}

Let $n$ be the number of vertices of the graph $\Gamma$. Mapping an edge of~$\Gamma$ to the associated transposition induces a projection $C(\Gamma) \ra S_n$, in other words an action of $C(\Gamma)$ on the~$n$ vertices. We say that an action of a group~$G$, generated by a set $S$, is {\bf{transpositional}} if the generators in $S$ act
 as transpositions. To such an action we associate the graph on the vertex set $X$, whose edges are those associated to the generators viewed as transpositions.

Fix a simple graph $\Gamma$. Let $\CAT(\Gamma)$ be the category of Coxeter groups acting transpositionally via $\Gamma$, defined as follows. The objects are the Coxeter groups generated by the edge set $E(\Gamma)$ (each with its own set of relations) and acting transpositionally on the vertex set of $\Gamma$, such that the associated graph is $\Gamma$; and the morphisms are the group homomorphisms induced by the identity map on the common generating set $E(\Gamma)$, when such a homomorphism is well-defined. For example, recall the triangle graph $\triangle$. The objects of $\CAT(\triangle)$ are the triangle groups
$$G_{3m,3n,3k} = \sg{a,b,c \suchthat\ a^2=b^2=c^2=1,\ (ab)^{3m}=(bc)^{3n}=(ca)^{3k} = 1}$$
for arbitrary $m,n,k$, with a morphism $G_{3m,3n,3k} \ra G_{3m',3n',3k'}$ whenever $m'\divides m$, $n'\divides n$ and $k'\divides k$. The group $C(\triangle) = G_{3,3,3}$ is the terminal object in this category.

\begin{thm}
The group $C(\Gamma)$ is the terminal object of $\CAT(\Gamma)$.
\end{thm}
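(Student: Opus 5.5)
To show that $C(\Gamma)$ is terminal in $\CAT(\Gamma)$, I would verify two things. First, $C(\Gamma)$ is an object of $\CAT(\Gamma)$. Second, for every object $G$ there is exactly one morphism $G \ra C(\Gamma)$, namely the homomorphism induced by the identity on $E(\Gamma)$.

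\emph{$C(\Gamma)$ is an object.} By \Cref{C=WL}, $C(\Gamma)=W(\Line{\Gamma})$ is a Coxeter group generated by $E(\Gamma)$. The transpositional action comes from the projection $C(\Gamma)\ra S_n$ sending an edge $\set{i,j}$ to $(i\,j)$. This projection is well defined because relations (1)--(3) of \Tref{Serg} hold in $S_n$: transpositions are involutions, disjoint transpositions commute, and two transpositions sharing a point generate $S_3$, so their product has order $3$. The associated graph is $\Gamma$ by construction.

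\emph{Uniqueness.} Morphisms are by definition induced by the identity map on the common generating set $E(\Gamma)$. So there is at most one morphism between any two objects, and it suffices to prove existence.

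\emph{Existence.} Let $G$ be an object: a Coxeter group on the generating set $E(\Gamma)$, acting on $V(\Gamma)$ so that each edge $e=\set{i,j}$ acts as the transposition $(i\,j)$. I would check that the defining relations (1)--(3) of $C(\Gamma)$ hold in $G$; then $e\mapsto e$ extends to a homomorphism $G\ra C(\Gamma)$. I would argue through the Coxeter matrix $m_G(e,e')$ of $G$, which is where the main obstacle lies. Since $G$ is a Coxeter group on $E(\Gamma)$, its presentation has the form $e^2=1$ and $(ee')^{m_G(e,e')}=1$. The action is a homomorphism $G\ra S_n$ mapping $ee'$ to $(i\,j)(k\,l)$, so the order of the image divides the order of $ee'$ in $G$. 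By standard Coxeter theory this order is exactly $m_G(e,e')$ (it may be $\infty$). Hence $m_G(e,e')$ is a multiple of $2$ when $e,e'$ are disjoint, and a multiple of $3$ when they share a vertex. Consequently every defining relation $(ee')^{m_G(e,e')}=1$ of $G$ holds in $C(\Gamma)$, because there $(ee')^2=1$ or $(ee')^3=1$ respectively, and $e^2=1$ holds in both groups. Infinite labels impose no relation.

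Thus the identity on generators induces a well-defined homomorphism from every object to $C(\Gamma)$, and $C(\Gamma)$ is terminal. I expect the only delicate point to be the correct direction of the map: we need the relations of $G$ to hold in $C(\Gamma)$, not the reverse. The essential input making this work is that Coxeter generators $s,s'$ have $ss'$ of order exactly $m(s,s')$.
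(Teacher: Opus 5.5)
Your proposal is correct and follows the same route as the paper: the transpositional action forces each finite label $m$ of $G$ to satisfy $2\mid m$ for disjoint edges and $3\mid m$ for intersecting ones, so every defining relation $(ee')^{m}=1$ of $G$ already holds in $C(\Gamma)$. One small correction: the fact that $ee'$ has order exactly $m$ in a Coxeter group is not needed, and the paper does not use it. The relation $(ee')^{m}=1$ in $G$ already forces the order of the image of $ee'$ in $S_n$ to divide $m$.
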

\begin{proof}
Let $G$ be any object in $\CAT(\Gamma)$. We need to verify that the identity map on $E(\Gamma)$ induces a well-defined homomorphism $G \ra C(\Gamma)$. Since $G$ is a Coxeter group, it is defined by relations of the form $(uv)^n = 1$, where $u,v$ are edges in $\Gamma$.
In $S_n$, the product of disjoint transpositions is of order $2$, and the product of adjacent transpositions is a 3-cycle. If $u,v$ are disjoint then their images in $S_n$ commute, so $n$ must be even; but the relation $(uv)^2 = 1$ holds in $C(\Gamma)$, so $(uv)^n = 1$ is satisfied by the morphism. Similarly if $u,v$ share a vertex then the product of their images in $S_n$ has order $3$, so $3 \divides n$; but the relation $(uv)^3 = 1$ holds in $C(\Gamma)$ so $(uv)^n = 1$ is satisfied in this case as well. This shows that the relations defining $G$ are implied by the relations defining $C(\Gamma)$, so the map is well-defined.
\end{proof}

\smallskip
\subsection{The groups $\CY(\Gamma)$}

The group $C(\Gamma)$ is defined by the relations (1)--(3) of \Tref{Serg}. Adding the relations~(4), of the form $[e,e'e''e']=1$, we obtain a quotient group, denoted by $\CY(\Gamma)$, which was fully described in \cite{Cox} as a semidirect product $S_n \ltimes F_{t,n}$, where $t$ is the rank of the fundamental group $\pi_1(\Gamma)$ and $n$ the number of vertices. If $\Gamma$ is a tree then the fifth family of relations is empty, in which case $\CY(\Gamma) = S_n$. It is shown in \cite{Cox} that if $\Gamma$ is not a tree then the projection $\CY(\Gamma) \ra S_n$ is proper. In fact, $\CY(\Gamma)$ is virtually abelian if~$\Gamma$ has a single cycle, and~$\CY(\Gamma)$ contains a free nonabelian group if~$\Gamma$ has more than one cycle. The action of~$C(\Gamma)$ on the vertices is induced by the action of $\CY(\Gamma)$, so there are natural maps
\begin{equation}\label{maps}
\xymatrix{C(\Gamma) \ar@{->}[r] & \CY(\Gamma) \ar@{->}[r] & S_n.}
\end{equation}

\begin{exmpl}\label{DY2}
Let us compare the groups $C(\cdot)$ and $\CY(\cdot)$ for the graphs $\triangle$ and~$\Y$ of \Fref{basic}. Combining \eq{maps} for the two graphs, we have the following commuting diagram,
$$\xymatrix@R=12pt{
C(\Y) \ar@{=}[d] \ar@{->}[r] & \CY(\Y) \ar@{=}[r] & S_4 \ar@{->}[d] \\
C(\triangle) \ar@{=}[r] & \CY(\triangle) \ar@{->}[r] & S_3
}
$$
where the mappings are as follows. The equality $C(\Y) = C(\triangle)$ is formal, as described in \Eref{DY}. The equality $C(\triangle) = \CY(\triangle)$ is also formal, because~$\triangle$ has no vertices of degree $\geq 3$. The equality $\CY(\Y) = S_4$ holds as explained above because $\Y$ has no cycles. Although $C(\Y) = C(\triangle)$, the natural actions of these groups differ, because the number of vertices of the two graphs differ. We now make \Eref{cn} explicit, using the notation of \Eref{DY}. Letting $\alpha = xyzy$, $\beta = yzxz$ and $\gamma = zxyx$ in $C(\Y)$, it is not hard to verify that $\alpha\beta\gamma = 1$ and $\sg{\alpha,\beta,\gamma} \isom \Z^2$, from which we obtain the commuting diagram with short exact sequences
$$\xymatrix@R=12pt{
1 \ar@{->}[r] & \sg{\alpha^2,\beta^2,\gamma^2} \ar@{^(->}[d] \ar@{->}[r] & C(\Y) \ar@{=}[d] \ar@{->}[r]  & S_4 \ar@{->}[d] \ar@{->}[r] & 1 \\
1 \ar@{->}[r] & \sg{\alpha,\beta,\gamma} \ar@{->}[r] & C(\triangle) \ar@{->}[r] & S_3 \ar@{->}[r] & 1
}$$
In particular $C(\triangle) \isom S_3 \ltimes \Z^2$, as we can take $S_3 \stackrel{\sim}{\lra} \sg{x,y} \leq C(\triangle)$ to split the bottom row. Notice that $\alpha^2 = [x,yzy]$, so $\alpha^2 = 1$ is one of the defining relations of $\CY(\Y)$, which reproves the equality $\CY(\Y) = S_4$. In particular, the projection $C(\Y) \ra \CY(\Y)$ is proper.
\end{exmpl}

\begin{rem}
The extension $1 \lra \Z^2 \lra C(\Y) \lra S_4 \lra 1$ does not split. This is because $C(\Y) \isom \Z^2 \rtimes S_3$ has no elements of order $4$. Indeed if $(a \s)^4 = 1$, for $a \in \Z^2$ and $\s \in S_3$, then $\s^4 = 1$ implies $\s^2 = 1$, but then $(a \s)^4 = (a\;\s\!\,a)^2$, so $a\;\s\!\,a = 1$ as $\Z^2$ is torsion-free, and then $(a \s)^2 = 1$. Alternatively, by \Pref{maxfin} all the maximal finite subgroups of $C(\Y)$ are isomorphic to $S_3$.
\end{rem}

\begin{rem}
Fischer \cite{Fischer} defined a ``$3$-transposition group'' as a group generated by a conjugacy class $I$ of involutions such that $o(ss') \leq 3$ for any two elements $s,s' \in I$. It is true that $C(\Gamma)$ is generated by conjugate (by \Cref{conji}) involutions satisfying this condition on the order; but the order condition does not hold for {\it{all}} the conjugates. Indeed, even in $C(\triangle)$, the element $\alpha = x\cdot yzy$ has infinite order.
\end{rem}

\begin{prop}
For a simple graph $\Gamma$, the projection $C(\Gamma) \ra \CY(\Gamma)$ is proper, unless $\Gamma$ is a cycle or the path graph.
\end{prop}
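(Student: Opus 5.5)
Assume $\Gamma$ is connected; in the disconnected case both $C(\cdot)$ and $\CY(\cdot)$ split as direct products over components, so the connected case suffices. We show the contrapositive. The relations (4), $[e,e'e''e']=1$, are only imposed when three distinct edges $e,e',e''$ share a vertex. So if every vertex of $\Gamma$ has degree at most $2$, there are no relations of type (4), and $C(\Gamma)=\CY(\Gamma)$. The connected graphs with maximal degree $\le 2$ are exactly the paths $\path[n]$ and the cycles $\cyc[n]$, which are the excluded cases. It therefore remains to show that if $\Gamma$ has a vertex $v$ of degree $\geq 3$, then some relation of type (4) fails in $C(\Gamma)$.

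Take three edges $x,y,z$ at $v$. Together they span a claw $\Y\sub\Gamma$. By \Pref{parabolic}, the subgroup $\sg{x,y,z}\le C(\Gamma)$ is isomorphic to $C(\Y)$, via the identity on these generators. By \Eref{DY2}, the element $\alpha^2=[x,yzy]$ is nontrivial in $C(\Y)$. Indeed, $\alpha=xyzy$ lies in $\sg{\alpha,\beta,\gamma}\isom\Z^2$, so it is nontrivial and of infinite order, and hence $\alpha^2\neq 1$. If $\alpha=1$ we would get $xyz=y$, contradicting that $\sg{x,y,z}$ is infinite, or simply contradicting the description of the translation lattice.

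Consequently $[x,yzy]\neq 1$ in $C(\Gamma)$. But $[x,yzy]=1$ is one of the defining relations (4) of $\CY(\Gamma)$, so this element lies in the kernel of $C(\Gamma)\ra\CY(\Gamma)$. The kernel is therefore nontrivial, and the projection is proper.

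The only step that needs care is the nontriviality of $\alpha^2$ in $C(\Y)$, which we take from the structure $C(\triangle)\isom S_3\ltimes\Z^2$ in \Eref{DY2}. As an independent check, $\alpha=x\cdot yzy$ is a product of two reflections in the affine group $\tilde A_2$ whose reflecting lines are parallel, namely the reflections through $x$ and through the conjugate $yzy$. Such a product is a nonzero translation, so it has infinite order.
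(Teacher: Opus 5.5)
Your proof is correct and follows the paper's argument: embed $C(\Y)$ into $C(\Gamma)$ as a parabolic subgroup via \Pref{parabolic}, and transfer the non-injectivity of $C(\Y)\ra\CY(\Y)$ from \Eref{DY2}. Because you name the kernel element $[x,yzy]=\alpha^2$ explicitly, you do not need the paper's appeal to the embedding $\CY(\Y)\hookrightarrow\CY(\Gamma)$ from \cite[Prop.~7.8]{Cox}, which in fact is not required for this conclusion.
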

\begin{proof}
If $\Gamma$ is the cycle or the path graph, there are no claw subgraphs and no new relations, so $C(\Gamma) = \CY(\Gamma)$. Now assume $\Gamma$ is not a cycle or a path; then $\Gamma$ contains~$\Y$ as a subgraph. By \Pref{parabolic} and the analogous statement for $\CY(\cdot)$ given in \cite[Prop.~7.8]{Cox}, we have the commutative diagram
$$\xymatrix@R=14pt@C=14pt{ C(\Y) \ar@{^(->}[r] \ar@{->}[d] & C(\Gamma) \ar@{->}[d] \\ \CY(\Y) \ar@{^(->}[r] & \CY(\Gamma) }
$$
The final statement in \Eref{DY2} is that the projection in the left-hand side is proper, so the claim follows.
\end{proof}

\section{Techniques for trees}\label{sec3}

In this section we study the Coxeter group $C(\Sigma)$ where $\Sigma$ is a tree. The group~$C(\Sigma)$ acts transpositionally on the vertex set $\Omega = V(\Sigma)$.

\smallskip
\subsection{Edges in $\Line{\Sigma}$}

For any pair of intersecting edges $\set{a,b},\set{b,c} \in E(\Sigma)$ (where $a,b,c$ are distinct vertices of $\Sigma$), we denote the corresponding edge of $\Line{\Sigma}$ by $$[abc] = \set{\set{a,b},\set{b,c}}.$$
By definition $[abc] = [cba]$.


By \Dref{CT}, the group $C(\Line{\Sigma})$ is generated by all the edges $[abc]$ of $\Line{\Sigma}$. Notice that $[abc]$ and $[a'b'c']$ are disjoint in $\Line{\Sigma}$ if and only if the sets of vertices $\set{a,b,c}$ and $\set{a',b',c'}$ in $\Sigma$ intersect at most once, because $\Sigma$ is a tree (so the case $\set{a,c} = \set{a',c'}$ with $b' \neq b$ is impossible).

Since the tree~$\Sigma$ is connected, $\Line{\Sigma}$ is connected as well. Let us inspect the path connecting two edges of $\Line{\Sigma}$ as a substructure of $\Sigma$ itself.
We say that disjoint edges $[abc]$ and $[a'b'c']$ in $\Line{\Sigma}$ are {\bf{evenly connected}} if
the path in~$\Sigma$ connecting the middle points $b$ and $b'$ does not pass through any of the vertices $a,c,a',c'$ (this is Case (a) in the top row of \Fref{TLT}). This relation is clearly symmetric.

\begin{prop}\label{graph1}
For two disjoint edges $[abc],[a'b'c']$ of $\Line{\Sigma}$, the following are equivalent.
\begin{enumerate}
\item $[abc]$ and $[a'b'c']$ are evenly connected.
\item The four distances
$$\dist[\Sigma]{a}{a'}, \quad \dist[\Sigma]{a}{c'}, \quad \dist[\Sigma]{c}{a'}, \quad \dist[\Sigma]{c}{c'}$$
in $\Sigma$ are equal.
\item Writing $[abc] = \set{x,y}$ and $[a'b'c'] = \set{x',y'}$, the four distances
$$\dist[\Line{\Sigma}]{x}{x'}, \quad \dist[\Line{\Sigma}]{x}{y'}, \quad \dist[\Line{\Sigma}]{y}{x'}, \quad \dist[\Line{\Sigma}]{y}{y'}$$
in $\Line{\Sigma}$ are equal.
\end{enumerate}
\end{prop}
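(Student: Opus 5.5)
Since $\Sigma$ is a tree, we argue through the unique path $P$ in $\Sigma$ from $b$ to $b'$. Disjointness of $[abc]$ and $[a'b'c']$ means the vertex sets $\set{a,b,c}$ and $\set{a',b',c'}$ meet in at most one vertex. In particular $b\neq b'$, since otherwise the two pairs of edges would share the edge structure around a common middle vertex and both edges $\set{a,b},\set{a',b'}$ would meet. So $P$ has length $d=\dist[\Sigma]{b}{b'}\geq 1$.

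The key step for (1)$\Leftrightarrow$(2) is a case analysis on the first edge of $P$ leaving $b$, and symmetrically on the last edge entering $b'$. Suppose $P$ does not start along $\set{b,a}$ or $\set{b,c}$. Then in the tree, $b$ lies on the path from $a$ to $b'$ and on the path from $c$ to $b'$. Hence $\dist{a}{x}=1+\dist{b}{x}$ for every vertex $x$ on the far side, and the same holds for $c$. Applying the same reasoning at $b'$ shows that in case (1) all four distances equal $d+2$.

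Conversely, suppose $P$ passes through $a$; then it starts along $\set{b,a}$, so $a$ is strictly closer to $b'$ than $c$ is. I will check that $\dist{a}{y}=\dist{c}{y}-2$ for $y\in\set{a',c'}$, provided the branch at $b'$ does not interfere. The subtle point is the overlap cases, such as $a=a'$ or $a=b'$, which disjointness partially excludes. I will handle them by noting that $c$ lies on the far side of $b$ from everything in $\set{a',b',c'}$, so $\dist{c}{y}=1+\dist{b}{y}$. Meanwhile $\dist{a}{y}=\dist{b}{y}-1$ whenever the path from $b$ to $y$ starts with $a$, and it does for $y=b'$ and for $y\in\set{a',c'}$ whenever $y\neq b$. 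If some $y=b$, for instance $a'=b$, then $\dist{a}{b}=1\neq 1+0$ for $c$ as well, with the inequality still coming from a mismatch $\dist{a}{a'}=1$ versus $\dist{c}{a'}=1$. This subcase needs separate care, and I would instead compare $\dist{a}{c'}$ with $\dist{c}{c'}$. So in every non-evenly-connected configuration, some pair $\dist{a}{y}\neq\dist{c}{y}$, and the four distances are not all equal. This exhaustive bookkeeping of degenerate overlaps is the main obstacle.

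For (2)$\Leftrightarrow$(3), we use \Rref{mydist}, which says that for distinct edges $e,f$ of $\Sigma$, $\dist[\Line{\Sigma}]{e}{f}=\dist[\Sigma]{e}{f}+1$. With $x=\set{a,b}$, $y=\set{b,c}$ and similarly $x',y'$, these edges are pairwise distinct by disjointness. In a tree, the edge distance $\dist[\Sigma]{\set{a,b}}{\set{a',b'}}$ is the minimum of the four vertex distances. In configuration (1) this minimum is $\dist{b}{b'}=d$ for every choice of pair, so all four $\Line{\Sigma}$-distances equal $d+1$. Otherwise, say $P$ starts along $\set{b,a}$; then $x$ is strictly closer than $y$ to both $x'$ and $y'$, by the same computation as above, so (3) fails. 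Since (1) is equivalent to each of (2) and (3), the three conditions are equivalent.
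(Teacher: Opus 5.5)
Your argument follows the paper's proof. The paper also splits according to how the unique $b$--$b'$ path meets the leaves $a,c,a',c'$ (its cases (a), (b), (c) in \Fref{TLT}), reads off the distances, and uses \Rref{mydist} for condition (3). Your version merges cases (b) and (c) by looking at just one leaf on the path, which suffices. The argument is correct in substance, but one step is false and one subcase is garbled.

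The false step is ``$b\neq b'$''. Disjointness of $[abc]$ and $[a'b'c']$ in $\Line{\Sigma}$ only forbids a common \emph{edge} of $\Sigma$, not a common vertex. If $b$ has four distinct neighbours $a,c,a',c'$, then $[abc]$ and $[a'bc']$ are disjoint. This is exactly the configuration the paper uses in \Rref{YY+} (a vertex of degree $\geq 4$, with empty dotted path). In this case the path $P$ is trivial, so (1) holds, all four distances in (2) equal $2$, and all four in (3) equal $1$. Your computation with $d=0$ already yields these values, so you should drop the claim $d\geq 1$ rather than exclude this case.

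As for the subcase $a'=b$ (or $c'=b$) when $P$ starts along $\set{b,a}$: comparing $\dist[\Sigma]{a}{a'}=1$ with $\dist[\Sigma]{c}{a'}=1$ is not a mismatch at all. In fact the subcase is vacuous. Setting $a'=b$ makes $b'$ a neighbour of $b$ on the $a$-side, so $b'=a$. Then $\set{a',b'}=\set{a,b}$ is a common edge, contradicting disjointness. With this observation, $a',b',c'$ all lie in the component of $\Sigma-b$ containing $a$. Your distance comparison for (2) and your claim for (3) that $x$ is strictly closer than $y$ then go through without special cases.
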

\begin{proof}
Let $[abc]$ and $[a'b'c']$ be edges in $\Line{\Sigma}$; each of these is a pair of edges in $\Sigma$, whose ``leaves'' are $a,c$ and $a',c'$, respectively. Since $\Sigma$ is a tree, there is a unique path in $\Sigma$ connecting $b$ and $b'$.

The three diagrams in the top row of \Fref{TLT} depict this structure, with a dotted line for the path, in three possible cases, characterized by the number of leaves through which the path passes. The diagrams in the bottom row depict the same structures in $\Line{\Sigma}$. It is now easy to verify, using \Rref{mydist}, that the conditions stated in the proposition hold in case (a) but not in cases (b) or (c).
\begin{figure}
$$
\xymatrix@R=2pt@C=2pt{
*={\bullet} \ar@{-}[rrdd] & & & & & & & & & & *={\bullet}\ar@{-}[lldd] \\
 & & & & & & & & & & \\
 & & *={\bullet} \ar@{-}[lldd] \ar@{.}[rrrrrr] & & & & & & *={\bullet} \ar@{-}[rrdd]& & \\
 & & & & & & & & & & \\
*={\bullet} & & & & & & & & & & *={\bullet}  }
\qquad
\xymatrix@R=2pt@C=2pt{
*={\bullet} \ar@{-}[rrdd] & & & & & & & & & & *={\bullet}\ar@{-}[lldd] \\
 & & & & & & & & & & \\
 & & *={\bullet} \ar@{-}[lldd] \ar@{.}[rrrrrrrruu] & & & & & & *={\bullet} \ar@{-}[rrdd]& & \\
 & & & & & & & & & & \\
*={\bullet} & & & & & & & & & & *={\bullet}  }
\qquad
\xymatrix@R=2pt@C=2pt{
*={\bullet} \ar@{-}[rrdd] \ar@{.}[rrrrrrrrrr] & & & & & & & & & & *={\bullet}\ar@{-}[lldd] \\
 & & & & & & & & & & \\
 & & *={\bullet} \ar@{-}[lldd]  & & & & & & *={\bullet} \ar@{-}[rrdd]& & \\
 & & & & & & & & & & \\
*={\bullet} & & & & & & & & & & *={\bullet}  }
$$
$$\xymatrix@R=2pt@C=2pt{
 & & & & & & & & & &  \\
 & *={\circ} \ar@{.}[rrd] \ar@{-}[dd] & & & & & & & & *={\circ}\ar@{.}[lld] \ar@{-}[dd] &\\
 & & & *={} \ar@{.}[rrrr] & & & &  *={} \ar@{.}[rrd] & & & \\
 & *={\circ} \ar@{.}[rru] & & & & & & & &  *={\circ} & \\
 & & & & & & & & & &  }
 \qquad
 \xymatrix@R=2pt@C=2pt{
 & & & & & & & & & &  \\
 & *={\circ} \ar@{.}[rrd] \ar@{-}[dd] & & & & & & & *={}\ar@{.}[r] & *={\circ} \ar@{-}[dd] &\\
 & & & *={} \ar@{.}[rrrrru] & & & &   & & & \\
 & *={\circ} \ar@{.}[rru] & & & & & & & &  *={\circ} & \\
 & & & & & & & & & &  }
 \qquad
 \xymatrix@R=2pt@C=2pt{
 & & & & & & & & & &  \\
 & *={\circ}  \ar@{-}[dd] \ar@{.}[r] & *={} \ar@{.}[rrrrrr] &  & & & & &*={}  \ar@{.}[r]& *={\circ} \ar@{-}[dd] &\\
 & & &  & & & &  *={}  & & & \\
 & *={\circ} & & & & & & & &  *={\circ} & \\
 & & & & & & & & & &  }
$$
$$(a) \qquad\qquad\qquad\qquad\quad (b) \qquad\qquad\qquad\qquad\quad (c)$$
\caption{Pairs of edges in $\Line{\Sigma}$ and the distances between leaves: in $\Sigma$ (top) and in $\Line{\Sigma}$ (bottom). The edges are evenly connected in case (a).}\label{TLT}
\end{figure}
\end{proof}

\begin{cor}
The property of two edges in $\Line{\Sigma}$ to be evenly connected is intrinsic to $\Line{\Sigma}$, and does not require the original graph~$\Sigma$.
\end{cor}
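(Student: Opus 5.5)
The plan is to read the corollary directly off condition (3) of \Pref{graph1}. There are two things to check. First, the hypothesis ``$[abc]$ and $[a'b'c']$ are disjoint edges of $\Line{\Sigma}$'' must be statable inside $\Line{\Sigma}$. Second, the condition itself must be a statement about the graph metric of $\Line{\Sigma}$ alone.

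For the first point, an edge of $\Line{\Sigma}$ is an unordered pair $\set{x,y}$ of adjacent vertices of $\Line{\Sigma}$. Two such edges $\set{x,y}$ and $\set{x',y'}$ are disjoint in $\Line{\Sigma}$ exactly when $\set{x,y}\cap\set{x',y'}=\emptyset$. This is purely graph-theoretic in $\Line{\Sigma}$. We must also confirm that it agrees with the notion of disjointness used in \Pref{graph1}. There, disjointness was phrased via $\Sigma$ as ``$\set{a,b,c}$ and $\set{a',b',c'}$ intersect at most once''. The paper notes just before the proposition that this is equivalent to disjointness in $\Line{\Sigma}$, since $\Sigma$ is a tree.

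For the second point, \Pref{graph1} says that for disjoint edges $\set{x,y},\set{x',y'}$ of $\Line{\Sigma}$, being evenly connected is equivalent to the four distances $\dist[\Line{\Sigma}]{x}{x'}$, $\dist[\Line{\Sigma}]{x}{y'}$, $\dist[\Line{\Sigma}]{y}{x'}$, $\dist[\Line{\Sigma}]{y}{y'}$ being equal. These depend only on the abstract graph $\Line{\Sigma}$ and its vertex labels, not on how each vertex $x$ is realized as an edge of $\Sigma$. Consequently, we may \emph{define} evenly connected for any pair of disjoint edges in the graph $\Line{\Sigma}$ by condition (3). The equivalence (1)$\Leftrightarrow$(3) then shows that this definition agrees with the original one, whichever tree $\Sigma$ is used to present the line graph.

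The only subtlety is the ambiguity in recovering $\Sigma$. By Whitney's theorem $\Sigma$ is determined by $\Line{\Sigma}$, except for the pair $\triangle,\Y$, and $\triangle$ is not a tree. Even so, the intrinsic formulation makes this point irrelevant: condition (3) never refers to $\Sigma$, so any identification of $\Line{\Sigma}$ with $\Line{\Sigma'}$ for trees $\Sigma,\Sigma'$ carries evenly connected pairs to evenly connected pairs. I expect no real obstacle. The main care needed is to state explicitly that both the disjointness hypothesis and condition (3) are invariant under graph isomorphisms of $\Line{\Sigma}$.
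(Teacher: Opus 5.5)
Your proposal is correct and is essentially the paper's argument. The paper states the corollary without a separate proof because it follows directly from the equivalence (1)$\Leftrightarrow$(3) in \Pref{graph1}, together with the remark just before that proposition identifying disjointness of $[abc],[a'b'c']$ with disjointness in $\Line{\Sigma}$; your extra remarks on invariance under isomorphisms of $\Line{\Sigma}$ and on Whitney's theorem are accurate but go slightly beyond what the paper spells out.
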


\smallskip
\subsection{The Coxeter diagram $\covg{\Sigma}$}\label{ss:covg}

Our analysis of $C(\Sigma)$, where $\Sigma$ is any tree, requires the group $C(\Line{\Sigma})$, which by \Cref{C=WL} is the Coxeter group~$W(\Line{\Line{\Sigma}})$. We will define a Coxeter diagram $\covg{\Sigma}$ built from $\Line{\Line{\Sigma}}$, so that there is a natural projection $\Cover[\Sigma] \ra W(\Line{\Line{\Sigma}})$. By definition, the vertices of $\Line{\Line{\Sigma}}$ are the edges of~$\Line{\Sigma}$.

\begin{defn}\label{covgdef}
Let $\Sigma$ be a tree. The Coxeter diagram $\covg{\Sigma}$ is the graph whose vertices are the vertices of $\Line{\Line{\Sigma}}$, with edges of two types: (1) the edges of $\Line{\Line{\Sigma}}$, and (2) an edge, labelled~$\infty$, connecting every disjoint pair $[abc],[a'b'c'] \in E(\Line{\Sigma})$ of evenly connected edges.
\end{defn}

Thus $\covg{\Sigma}$ is the union of the unlabelled graph $\Line{\Line{\Sigma}}$, with new edges all labelled~$\infty$.

\begin{exmpl}
\begin{enumerate}
\item For the path graph $\Sigma = \path[n]$ we have $\Line{\path[n]} = \path[n-1]$ and $\covg{\path[n]} = \Line{\Line{\path[n]}} = \path[n-2]$.
\item For $\Sigma = \Y$ we have $\Line{\Y} = \triangle$ and $\covg{\Y} = \Line{\Line{\Y}} = \triangle$.
\end{enumerate}
\end{exmpl}

A {\bf{generalized $\Y$}} is a tree composed of three paths joined in a central point. A tree is a generalized $\Y$ if and only if it has~$3$ leaves, if and only if it has a unique vertex of degree~$3$ and no vertices of higher degree.
\begin{rem}\label{YY+}
$\covg{\Sigma} = \Line{\Line{\Sigma}}$ if and only if $\Sigma$ is a path graph or a generalized $\Y$.
\end{rem}
\begin{proof}
It is easy to realize a pair of evenly connected edges in $\Line{\Sigma}$ if there is a vertex of degree at least $4$ (where the dotted path in \Fref{TLT}(a) is empty), or if there are two vertices of degree $3$.
\end{proof}

\begin{exmpl}\label{e36}
Let $\Sigma$ be the tree at the left-hand side in \Fref{1inf}. The line graph~$\Line{\Sigma}$ is in the middle, and its line graph $\Line{\Line{\Sigma}}$ is at the right-hand side, with an extra edge $\xymatrix{[123] \ar@{-}[r]^{\infty} & [456]}$ for the pair of evenly connected disjoint edges of $\Line{\Sigma}$, thus composing $\covg{\Sigma}$.
\begin{figure}
$$\xymatrix@R=6pt@C=6pt{
1 \ar@{-}[rd] & & & 4 \ar@{-}[ld] \\
{} & 2  \ar@{-}[r] \ar@{-}[ld]  & 5 \ar@{-}[rd]  \\
3 & & & 6
}
\qquad
\xymatrix@R=6pt@C=6pt{
\set{1,2} \ar@{-}[dd] \ar@{-}[rd] & & \set{4,5} \ar@{-}[dd] \ar@{-}[ld]\\
{} & \set{2,5}  \ar@{-}[ld] \ar@{-}[rd] &  \\
\set{2,3} & & \set{5,6}
}
\qquad
\xymatrix@R=6pt@C=6pt{
{} & [125] \ar@{-}[rr] \ar@{-}[rrdd] \ar@{-}[dd] & {} & [254]  \ar@{-}[dd] & {} \\
{}[123] \ar@/^2ex/@{-}[rrrr]|(0.285)\hole|(0.42)\hole|(0.57)\hole|(0.70)\hole^\infty \ar@{-}[ru] \ar@{-}[rd] & {} &  & {} & {}[456] \ar@{-}[lu] \ar@{-}[ld] \\
{} & [325] \ar@{-}[rr] \ar@{-}[rruu] & {} &  [256] & \\
}
$$
\caption{The graphs of \Eref{e36}} \label{1inf}
\end{figure}
\end{exmpl}

\begin{exmpl}\label{e37}
\Fref{2inf} provides another example similar to \Eref{e36}.
\end{exmpl}
\begin{figure}
$$\xymatrix@R=6pt@C=10pt{
*={\bullet} \ar@{-}[rd] & &&& & *={\bullet} \ar@{-}[ld] \\
{} & *={\bullet}  \ar@{-}[r] \ar@{-}[ld] &*={\bullet} \ar@{-}[r] & *={\bullet} \ar@{-}[r] & *={\bullet} \ar@{-}[rd]  \\
*={\bullet} & &&& & *={\bullet}
}
\qquad
\xymatrix@R=6pt@C=10pt{
*={\bullet} \ar@{-}[dd] \ar@{-}[rd] &&& & *={\bullet} \ar@{-}[dd] \ar@{-}[ld]\\
{} & *={\bullet}  \ar@{-}[ld] \ar@{-}[r] &*={\bullet} \ar@{-}[r] & *={\bullet} \ar@{-}[rd] & \\
*={\bullet} && && *={\bullet}
}
\qquad
\xymatrix@R=6pt@C=10pt{
{} & *={\bullet} \ar@{-}[rd] \ar@{-}[dd] & {} && *={\bullet}  \ar@{-}[dd] & {} \\
*={\bullet} \ar@/^2ex/@{-}[rrrrr]|(0.2)\hole|(0.3)\hole|(0.7)\hole|(0.8)\hole^\infty \ar@{-}[ru] \ar@{-}[rd] & {} & *={\bullet} \ar@{-}[r] & *={\bullet}  \ar@{-}[ru] \ar@{-}[rd] && *={\bullet} \ar@{-}[lu] \ar@{-}[ld] \\
{} & *={\bullet}  \ar@{-}[ru] & {} &&  *={\bullet} & \\
}
$$
\caption{The graphs of \Eref{e37}} \label{2inf}
\end{figure}

\smallskip
\subsection{The Coxeter group $\Cover$}

Having constructed in \Dref{covgdef} the Coxeter diagram $\covg{\Sigma}$ for the tree $\Sigma$, we have the associated Coxeter group~$\Cover$. It is as if this group is obtained from $C(\Line{\Sigma}) = W(\Line{\Line{\Sigma}})$ by removing the commutativity relation of each pair of evenly connected edges $[abc],[a'b'c'] \in E(\Line{\Sigma})$.
By definition, $\Cover$ is a Coxeter group, with orders~$2$, $3$ and $\infty$. Moreover, there is a natural projection
$$\Cover \ra C(\Line{\Sigma}),$$
mapping each edge $[abc]$ to itself. Indeed every relation assumed in $\Cover$ is already present in $C(\Line{\Sigma})$.

Restating \Rref{YY+}, we note:
\begin{rem}\label{Y+}
The projection $\Cover \ra C(\Line{\Sigma})$ is an isomorphism if and only if~$\Sigma$ is a path graph or a generalized~$\Y$.
\end{rem}

\smallskip
\subsection{The system of paths in $\Sigma$}\label{ss:pi}

For any two vertices $a,b$ of the tree $\Sigma$, let $u_1,\dots,u_t$ be the unique path from $a$ to $b$, as in \Fref{path}.
\begin{figure}
$$\xymatrix{
a \ar@{-}[r]^{u_1} & \circ \ar@{-}[r]^{u_2} & \circ \ar@{. }[r] & \circ \ar@{-}[r]^{u_t} & b
}$$
\caption{The path from $a$ to $b$ in $\Sigma$} \label{path}
\end{figure}
We set
\begin{equation}\label{pidef}
\pi_{ab} = u_1 \cdots u_t,
\end{equation}
viewed as an element of $C(\Sigma)$. In particular $\pi_{aa} = 1$ is the identity element. 
In terms of the action of $C(\Sigma)$ on the vertices, note that \begin{equation}\label{piact}
\pi_{ab}(b) = a,\end{equation}
by induction on $t$.

\begin{prop}\label{pitrans}
For any $a,b,c \in V(\Sigma)$, we have that $\pi_{ab} \pi_{bc} = \pi_{ac}$.
\end{prop}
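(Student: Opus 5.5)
Since $\Sigma$ is a tree, the paths from $a$ to $b$, from $b$ to $c$, and from $a$ to $c$ all pass through a unique median vertex $m$ of the three vertices $a,b,c$. This is the vertex where the geodesics meet, and $m$ may coincide with one or more of $a,b,c$. The plan is to reduce the statement to the case where one of the three vertices lies on the path between the other two, and then to use the relation $e^2=1$.

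\textbf{Step 1.} Suppose $b$ lies on the path from $a$ to $c$. The unique path from $a$ to $c$ is then the concatenation of the path from $a$ to $b$ with the path from $b$ to $c$. So $\pi_{ac}=\pi_{ab}\pi_{bc}$ holds literally, as words in the edges, by \eq{pidef}.

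\textbf{Step 2.} Next we establish the reversal identity $\pi_{ba}=\pi_{ab}^{-1}$. If $\pi_{ab}=u_1\cdots u_t$, then the path from $b$ to $a$ traverses the same edges in reverse order, so $\pi_{ba}=u_t\cdots u_1$. Since each $u_i$ satisfies $u_i^2=1$ (relation (1) of \Tref{Serg}, which holds in $C(\Sigma)$), we get $\pi_{ab}\pi_{ba}=1$.

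\textbf{Step 3.} For the general case, write each path through the median $m$: $\pi_{ab}=\pi_{am}\pi_{mb}$ and $\pi_{bc}=\pi_{bm}\pi_{mc}$, and likewise $\pi_{ac}=\pi_{am}\pi_{mc}$. Each of these is an instance of Step 1, since $m$ lies on each of the three geodesics. Then
$$\pi_{ab}\pi_{bc}=\pi_{am}\pi_{mb}\pi_{bm}\pi_{mc}=\pi_{am}\pi_{mc}=\pi_{ac},$$
where the middle equality uses Step 2.

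The only point needing care is the existence of the median $m$ in a tree, including the degenerate cases where some of $a,b,c$ coincide or where $m$ is one of them. These follow from the uniqueness of paths in a tree: take $m$ to be the first vertex of the path from $b$ to $a$ that also lies on the path from $a$ to $c$. The subpath from $b$ to $m$ followed by the path from $m$ to $c$ is then a path with no repeated vertices, so it is the geodesic from $b$ to $c$. The degenerate cases, including $\pi_{aa}=1$, are covered by the same argument without separate treatment.
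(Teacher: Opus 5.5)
Your proof is correct and follows the paper's argument. The paper likewise notes $\pi_{ab}\pi_{ba}=1$, handles the case where $b$ lies between $a$ and $c$ by concatenation, and reduces the general case through the vertex $b'$ on the $a$--$c$ path closest to $b$, which is exactly your median $m$; your check that $m$ lies on the $b$--$c$ geodesic fills in a point the paper leaves implicit.
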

\begin{proof}
First observe that $\pi_{ab}\pi_{ba} = 1$, because the edges in $\Sigma$ are elements of order~$2$ in $C(\Sigma)$. Next, assume $b$ is a vertex between $a$ and $c$. Then the path from $a$ to $c$ is the concatenation of the paths from $a$ to $b$ and then from $b$ to $c$, and the claim is immediate. Finally, let $b'$ be the vertex on the path from $a$ to $c$ that is closest to~$b$, where perhaps $b' \in \set{a,c}$. Then $\pi_{ab} = \pi_{ab'}\pi_{b'b}$ and $\pi_{bc} = \pi_{bb'}\pi_{b'c}$, so $\pi_{ab}\pi_{bc} = \pi_{ab'}\pi_{b'b}\pi_{b'b}\pi_{b'c} = \pi_{ab'}\pi_{b'c} = \pi_{ac}$ by the previous case.
\end{proof}


\smallskip
\subsection{The maps $\Phi_o \co \Cover \ra C(\Sigma)$}\label{ss:Phi}

We define a system of maps $$\Phi_o \co \Cover \ra C(\Sigma),$$
one for each vertex $o \in V(\Sigma)$, directly from the presentation, see \Fref{thegroups}. We will later prove that $\Phi_o$ is an isomorphism of $\Cover$ with the stabilizer $C(\Sigma)_o \sub C(\Sigma)$.

\begin{figure}
$$\xymatrix@C=8pt{
{} & \Cover \ar@{->}[rrrr]^{\Phi_o} \ar@{->>}[d] &&& & C(\Sigma)_o \ar@{^(->}[d] & {} \\
W(\Line{\Line{\Sigma}}) \ar@{=}[r] &  C(\Line{\Sigma}) &&& & C(\Sigma) \ar@{=}[r] & W(\Line{\Sigma})
}$$
\caption{Coxeter groups, the stabilizer and the cover}\label{thegroups}
\end{figure}

\begin{prop}\label{abc}
Fix a vertex $o$. Let $[abc]$ be an edge of $\Line{\Sigma}$.
Then $$\pi_{oa} \pi_{bc} \pi_{oa}^{-1} = \pi_{oc} \pi_{ab} \pi_{oc}^{-1}$$ in $C(\Sigma)$.
\end{prop}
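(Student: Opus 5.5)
We reduce to a local identity at the vertex $b$. Since $[abc]$ is an edge of $\Line{\Sigma}$, the vertices $a,c$ are neighbours of $b$, so $\pi_{bc}=\set{b,c}$ and $\pi_{ab}=\set{a,b}$ are single generators. Write $x=\set{a,b}$ and $y=\set{b,c}$; these are adjacent edges of $\Sigma$, so $(xy)^3=1$ in $C(\Sigma)$, i.e.\ $xyx=yxy$. By \Pref{pitrans} we have $\pi_{oc}=\pi_{oa}\pi_{ac}$, and since the path from $a$ to $c$ in the tree is $a,b,c$, we get $\pi_{ac}=xy$. Hence
$$\pi_{oc}\pi_{ab}\pi_{oc}^{-1}=\pi_{oa}\,(xy)\,x\,(xy)^{-1}\,\pi_{oa}^{-1}=\pi_{oa}\,xyxyx\,\pi_{oa}^{-1},$$
using $(xy)^{-1}=y^{-1}x^{-1}=yx$.

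It therefore suffices to show $xyxyx=y$ in $C(\Sigma)$. This follows from the braid relation: $xyxyx=(xyx)yx=(yxy)yx=yx\,x=y$, using $y^2=x^2=1$. Conjugating by $\pi_{oa}$ then gives the claim.

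The only point requiring care is the identification $\pi_{oc}=\pi_{oa}\,x\,y$, with the factors in the correct order. This is \Pref{pitrans} applied to $o,a,c$, together with the definition \eq{pidef} of $\pi_{ac}$ as the product of the edges along the path from $a$ to $c$. Note that \Pref{pitrans} holds whatever the position of $o$ relative to $a,b,c$, so no case analysis is needed. Everything else is the relation $(xy)^3=1$, which holds in $C(\Sigma)$ by relation (3) of \Tref{Serg}, since $x$ and $y$ share the vertex $b$.
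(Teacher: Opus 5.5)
Your proof is correct and follows essentially the same route as the paper: write $\pi_{oc}=\pi_{oa}\pi_{ac}$ with $\pi_{ac}=xy$, reduce the statement to $xyxyx=y$, and deduce this from $(xy)^3=1$. Your explicit check of the order of the factors in $\pi_{ac}$, and of the fact that \Pref{pitrans} needs no case analysis on the position of $o$, is a welcome bit of extra care.
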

\begin{proof}
Let $x = \set{a,b}$ and $y = \set{b,c}$ be the edges in $\Sigma$ composing the edge~$[abc]$ in~$\Line{\Sigma}$. As elements of $C(\Sigma)$, $x = \pi_{ab}$ and $y = \pi_{bc}$, and by definition $\pi_{ac} = xy$. Now
$\pi_{oc} x \pi_{oc}^{-1} = \pi_{oa}\pi_{ac}x\pi_{ac}^{-1}\pi_{oa}^{-1} = \pi_{oa} xyxyx \pi_{oa}^{-1} = \pi_{oa} y \pi_{oa}^{-1}$
because $(xy)^3 = 1$.
\end{proof}

\begin{prop}\label{map}
For a fixed vertex $o$, the map $\Phi_o \co \Cover \ra C(\Sigma)$ defined on the generators by
\begin{equation}\label{Phidef}
\Phi_o([abc]) = \pi_{oa} \pi_{bc} \pi_{oa}^{-1} 
\end{equation}
is well-defined.
\end{prop}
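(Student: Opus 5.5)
The plan is to check, using the presentation of $\Cover$ from \Dref{covgdef}, that the proposed images of the generators satisfy every defining relation. First I must check that \eq{Phidef} depends only on the edge $[abc]=[cba]$ of $\Line{\Sigma}$ and not on the order of its leaves. This is exactly \Pref{abc}. I will also use a symmetric form of the image. Write $x=\set{a,b}$ and $y=\set{b,c}$. Since $\pi_{oa}=\pi_{ob}x$ by \Pref{pitrans}, we get
$$\Phi_o([abc]) = \pi_{ob}\,(xyx)\,\pi_{ob}^{-1},$$
and $xyx=yxy$ because $(xy)^3=1$. So $\Phi_o([abc])$ is a conjugate of the involution $xyx$. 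In particular it has order~$2$, which gives the relations $[abc]^2=1$.

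Next come the relations $(ee')^{m}=1$ with $m\in\set{2,3}$ for pairs of distinct generators. The pairs joined by an $\infty$-edge, namely evenly connected pairs, impose no relation and need no check. In every remaining case I will conjugate the product $\Phi_o(e)\Phi_o(e')$ by a suitable $\pi_{ov}$ and use \Pref{pitrans}. This turns the product into an element of a standard parabolic subgroup $C(\Sigma')$, where $\Sigma'$ is a small subgraph. If $\Sigma'$ is a path, then $C(\Sigma')\isom S_m$ by \Pref{parabolic}, and the order of the product can be read off from the corresponding permutations.

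\emph{Case 1: $e,e'$ are adjacent in $\Line{\Line{\Sigma}}$.} Then they share an edge of $\Sigma$. In the first subcase they have the same middle vertex, say $[abc]$ and $[abd]$. After conjugating by $\pi_{ob}$ we obtain $xyx$ and $xzx$ with $z=\set{b,d}$. These are conjugates by $x$ of the edges $y,z$, which share the vertex $b$, so the product has order $3$. In the second subcase the middle vertices differ, say $[abc]$ and $[bad]$. Using $\pi_{oa}=\pi_{ob}x$, the second image becomes $\pi_{ob}\,w\,\pi_{ob}^{-1}$ with $w=\set{a,d}$. We must then show that $xyx\cdot w$ has order $3$. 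All these elements lie in $C(\mathfrak{p}_4)\isom S_4$ for the path $c\,b\,a\,d$. There $xyx$ is the transposition $(a\,c)$ and $w=(a\,d)$, so the product has order $3$.

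\emph{Case 2: $e,e'$ are disjoint and not evenly connected.} By \Pref{graph1} and \Fref{TLT}, the path from $b$ to $b'$ passes through a leaf. Suppose first that it passes through $a'$ but not through $a$ or $c$. Write $\Phi_o([a'b'c'])=\pi_{ob}\pi_{ba'}\,y'\,\pi_{a'b}\pi_{bo}$ with $y'=\set{b',c'}$. The edge $y'$ is disjoint from every edge of the path from $b$ to $a'$, so it commutes with them, and the image reduces to $\pi_{ob}\,y'\,\pi_{ob}^{-1}$. Since $y'$ is also disjoint from $x$ and $y$, the element $y'$ commutes with $xyx$, and the product has order $2$. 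If instead the path passes through $a$ and through $a'$, I conjugate both images by $\pi_{oa}$ in the same way. This yields $y$ and $\pi_{aa'}y'\pi_{a'a}=y'$, which are disjoint edges and therefore commute. The case where the path passes through a leaf on one side only is the same up to symmetry.

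The main obstacle is the bookkeeping in Case~2. There I must choose which vertex to conjugate by so that the disjointness of the relevant edges of $\Sigma$, which holds because $\Sigma$ is a tree, becomes visible. I must also make sure that the case division by \Fref{TLT} covers every configuration.
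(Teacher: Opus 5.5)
Your proof is correct and follows essentially the same route as the paper: you verify the order-$2$, commuting, and order-$3$ relations by conjugating the two images to a common base point, split disjoint non-evenly-connected pairs by the cases of \Fref{TLT}, and treat the two configurations of pairs sharing an edge of $\Sigma$. The differences are cosmetic. Your symmetric form $\pi_{ob}(xyx)\pi_{ob}^{-1}$ in effect reproves \Pref{abc}, and you use an $S_4$ permutation check where the paper cancels a conjugation by a disjoint edge (the paper writes $\pi_{bc}\pi_{ab}\pi_{cc'}\pi_{ba}=\pi_{bc}\pi_{cc'}$).
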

\begin{proof}
We need to verify that $\Phi_o$ respects the defining relations of $\Cover$. Notice that if $o,o'$ are two vertices, then $\Phi_{o'}(\cdot) = \pi_{o'o} \Phi_{o}(\cdot) \pi_{o'o}^{-1}$ in their action on generators. This shows that the choice of origin $o$ is immaterial. 
\begin{enumerate}
\item For any edge $[abc]$, $\Phi_o([abc])^2 = (\pi_{oa} y \pi_{oa}^{-1})^2 = \pi_{oa} y^2 \pi_{oa}^{-1} = 1$ where $y = \pi_{bc}$.
\item Let $[abc]$ and $[a'b'c']$ be disjoint edges in $\Line{\Sigma}$. If they are evenly connected there is nothing to prove, so assume otherwise. We need to prove that $[\pi_{oa} \pi_{bc} \pi_{oa}^{-1}, \pi_{oa'} \pi_{b'c'} \pi_{oa'}^{-1}] = 1$. Conjugating, we need to prove that
    $[\pi_{bc} , \pi_{aa'} \pi_{b'c'} \pi_{aa'}^{-1}] = 1$. Recall that the top line of
    \Fref{TLT} depicts the possible relative positions of the edges $[abc]$ and $[a'b'c']$, classified by the number of leaves through which the path from $b$ to $b'$ passes. Case~(a) is ruled out by assumption, so we are in cases~(b) or~(c),
and by symmetry we assume they are labelled as follows.
$$
\xymatrix@R=0pt@C=0pt{
a \ar@{-}[rrdd] & & & & & & & & & & a'\ar@{-}[lldd] \\
 & & & & & & & & & & \\
 & & b \ar@{-}[lldd] \ar@{.}[rrrrrrrruu] & & & & & & b' \ar@{-}[rrdd]& & \\
 & & & & & & & & & & \\
c & & & & & & & & & & c' }
\qquad
\xymatrix@R=0pt@C=0pt{
a \ar@{-}[rrdd] \ar@{.}[rrrrrrrrrr] & & & & & & & & & & a'\ar@{-}[lldd] \\
 & & & & & & & & & & \\
 & & b \ar@{-}[lldd]  & & & & & & b' \ar@{-}[rrdd]& & \\
 & & & & & & & & & & \\
c & & & & & & & & & & c'  }
$$
In both cases the path from $a$ to $a'$ is disjoint from $\set{b',c'}$, so $\pi_{aa'}\pi_{b'c'}\pi_{aa'}^{-1} = \pi_{b'c'}$. It remains to show that
  $[\pi_{bc} , \pi_{b'c'}] = 1$, but the respective edges are disjoint ($c \neq c'$ by the choice of labels, and $b = b'$ is only possible in Case (a), when the dotted path is empty).
\item Let $[abc]$ and $[a'b'c']$ be a pair of intersecting edges in $\Line{\Sigma}$. Two pairs of intersecting edges in $\Sigma$ can collide in a common edge in two ways (up to symmetry), as depicted below. Namely, either $a' = b$ and $b' = c$, or $b' = b$ and $a' = a$, with the respective subgraphs in $\Sigma$ being as follows:
    $$
\xymatrix@R=-5pt@C=8pt{
{} & {} & {} & {} & {} & {} &{} &{} & c'  \\
a \ar@{-}[r]  & b \ar@{-}[r] & c\ar@{-}[r] & c' & {} &{} &a \ar@{-}[r]  & b \ar@{-}[rd] \ar@{-}[ru] &  & \\
{} & {} & {} & {} &{} &{} & {} & {} & c }
    $$
We need to prove that
    $(\pi_{oa}\pi_{bc} \pi_{oa}^{-1} \cdot \pi_{oa'}\pi_{b'c'} \pi_{oa'}^{-1})^3 = 1$. Equivalently, that $(\pi_{bc} \pi_{aa'}\pi_{b'c'} \pi_{a'a})^3 = 1$.

    In the first case, we want $(\pi_{bc} \pi_{ab}\pi_{cc'} \pi_{ba})^3 = 1$, but $\pi_{bc} \pi_{ab}\pi_{cc'} \pi_{ba} = \pi_{bc} \pi_{cc'}$ because the paths from $a$ to $b$ and from $c$ to $c'$ are disjoint, each being a single edge, and this element has order $3$ in $C(\Sigma)$ by definition.
    In the second case, the computation is $(\pi_{bc} \pi_{aa}\pi_{bc'} \pi_{aa})^3 = (\pi_{bc}\pi_{bc'})^3 = 1$, for the same reason.
\end{enumerate}
\end{proof}

\begin{rem}\label{map0}
For any vertex $o$, the image of $\Phi_o \co \Cover \ra C(\Sigma)$ is contained in the stabilizer of the vertex $o$. Indeed, the image of each generator $[abc]$ acts as $\pi_{oa}\pi_{bc}\pi_{ao}(o) = \pi_{oa}\pi_{bc}(a) = \pi_{oa}(a) = o$, since $\pi_{bc}$ transposes $b,c$ and leaves every other vertex, including $a$, fixed.
\end{rem}

\section{Reidemeister-Schreier method for stabilizers}\label{sec:RM}

In this section we are concerned with an arbitrary group $G = \sg{X \subjectto R}$, presented by generators and relations.
The Reidemeister-Schreier method (see \cite[Section~2.3]{MKS}, \cite[Subsection~1.3.7]{CGKZ}) 
is tasked with producing a presentation of a subgroup of $G$, given by a set of generators as words in the free group $\sg{X}$.
We present an adaptation to the classical method when $G$ is endowed with a transitive action on a set~$\Omega$, and~$H$ is the point stabilizer.

For simplicity, we assume that the given presentation of $G$ is {\bf{positive}} (the relations have the form $w = 1$ where $w$ is a word without inverses). When $G$ is generated by elements of finite order, as is the case for quotients of Coxeter groups, every presentation can be made positive (by replacing any $x^{-1}$ by $x^{o(x)-1}$).

\begin{algo}[Reidemeister-Schreier for stabilizers]\label{RSalg}
Let $\sg{X\subjectto R}$ be a presentation of a group $G$, acting transitively on a set $\Omega$. Fix a base point $o \in \Omega$. The process computes a presentation for the stabilizer $G_o < G$.

The action of $G$ on $\Omega$ induces an action of the free group $\sg{X}$ on $\Omega$. We say that $\set{\s_\omega \suchthat \omega \in \Omega} \subset \sg{X}$ is a {\bf{Schreier set}} if \begin{equation}\s_\omega(\omega) = o\end{equation} for each $\omega \in \Omega$, and the set is closed under taking initial subwords. Fix a Schreier set $S$ (which always exists). 

Consider the pairs $(\omega,x)$ for $\omega \in \Omega$ and $x \in X$. The pair $(\omega, x)$ is {\bf{redundant}} if~$\s_\omega x$ is in the Schreier set $S$. Equivalently, if~$\s_\omega x = \s_{x^{-1}(\omega)}$ in the free group~$\sg{X}$.
Let $\GG_0$ be the abstract group defined as follows:

{\sc{Generators}}. $\GG_0$ is generated by the set of pairs $$(\omega,x).$$

{\sc{Relations}}. (1) For every redundant pair $(\omega,x)$, we have the relation $$(\omega,x) = 1.$$
(2) Next, for each $\omega \in \Omega$ and a relation $r \in R$ of $G$, write $r = x_{1} \cdots x_{n}$ for $x_{i} \in X$ (here we assume positivity). Define a sequence $\omega_i \in \Omega$ by $\omega_1 = \omega$ and $\omega_{i+1} = x_i^{-1}(\omega_i)$. Then
\begin{equation}\label{algo}
(\omega_1, x_1) (\omega_2, x_2) \cdots (\omega_n,x_n) = 1,
\end{equation}
is a defining relation of $\GG_0$ (called the ``interpretation of~$r$ at~$\omega$''); and these are all the relations.
\end{algo}

We thus provided a presentation for $\GG_0$ (which is finite if $G$ is finitely presented and $\Omega$ is finite).
\begin{prop}[Reidemeister-Schreier]\label{RMhere}
The map $\Psi_o \co \GG_0 \ra G$ defined by
\begin{equation}\label{Psidef}
\Psi_o \co (\omega,x) \mapsto \s_{\omega} x \s_{x^{-1}(\omega)}^{-1}
\end{equation}
is a well defined embedding, and its image is the stabilizer~$G_o$.
\end{prop}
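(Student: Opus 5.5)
We plan to prove \Pref{RMhere} by reducing it to the classical Reidemeister--Schreier theorem, with the coset representatives indexed by $\Omega$. Since the action is transitive, the map $gG_o \mapsto g(o)$ identifies $G/G_o$ with $\Omega$. To match the normalization $\s_\omega(\omega)=o$, we use $\s_\omega^{-1}$ as the representative of the coset corresponding to $\omega$; equivalently, we work with the right cosets $G_o g$, which are indexed by $g^{-1}(o)$. The Schreier condition (closure under initial subwords) is exactly the hypothesis of the classical theorem, with the inversion tracked on the appropriate side.

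\textbf{Step 1: well-definedness.} We check that $\Psi_o$ respects relations (1) and (2). For (1), if $(\omega,x)$ is redundant then $\s_\omega x=\s_{x^{-1}(\omega)}$ already in the free group, so its image is $1$. For (2), the interpretation of $r=x_1\cdots x_n$ at $\omega$ telescopes:
$$\prod_i \s_{\omega_i}x_i\s_{\omega_{i+1}}^{-1}=\s_{\omega}\,r\,\s_{\omega_{n+1}}^{-1}.$$
Here $\omega_{n+1}=r^{-1}(\omega)=\omega$, because $r$ acts trivially, so the product is $\s_\omega r\s_\omega^{-1}=1$ in $G$.

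\textbf{Step 2: the image lies in $G_o$.} We have $\s_\omega x\s_{x^{-1}(\omega)}^{-1}(o)=\s_\omega x(x^{-1}(\omega))=\s_\omega(\omega)=o$.

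\textbf{Step 3: surjectivity onto $G_o$.} Given a positive word $w=x_1\cdots x_n$ representing $g\in G_o$, set $\omega_1=o$, where $\s_o=1$ since $S$ is closed under initial subwords, and $\omega_{i+1}=x_i^{-1}(\omega_i)$. The same telescoping gives $w=\s_o\,w\,\s_{\omega_{n+1}}^{-1}$, with $\omega_{n+1}=w^{-1}(o)=o$. Hence $w=\Psi_o\bigl(\prod(\omega_i,x_i)\bigr)$. By positivity every element has such a word.

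\textbf{Step 4: injectivity.} This is the main obstacle. We would build a rewriting map $\tau$ from words in $X$ that stabilize $o$ to words in the pairs, using the rule in Step 3. We then verify two things. First, $\tau\circ\Psi_o$ is the identity on $\GG_0$ modulo the relations. Applying $\tau$ to $\s_\omega x\s_{x^{-1}(\omega)}^{-1}$ produces (after writing inverses positively, or directly handling inverse letters via the pair $(\omega',x)^{-1}$) the generator $(\omega,x)$, flanked by products of pairs along the Schreier words $\s_\omega$ and $\s_{x^{-1}(\omega)}$. All of those pairs are redundant, since the prefixes lie in $S$, so they vanish by relation (1). Second, $\tau$ descends to $G_o$. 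If two words are equal in $G$, they differ by insertions of conjugates $u r u^{-1}$ and free cancellations. The image under $\tau$ of $u r u^{-1}$ is a conjugate of the interpretation of $r$ at $u^{-1}(o)$, which is trivial by relation (2). Free cancellations $xx^{-1}$ map to trivial expressions. With positive presentations, $x^{-1}$ is itself a word $x^{k}$, and cancellation $x\,x^{o(x)-1}$ is an instance of the relation $x^{o(x)}$, again covered by (2).

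The careful bookkeeping in this last step is the only delicate point. Alternatively, one may simply cite the classical theorem \cite[Section~2.3]{MKS}, after checking that the pair $(\omega,x)$ corresponds to the Schreier generator $\s_\omega x\,\overline{\s_\omega x}^{-1}$, where $\overline{\s_\omega x}=\s_{x^{-1}(\omega)}$ is the representative of the coset of $\s_\omega x$. This holds because the element $\s_\omega x$ sends $x^{-1}(\omega)$ to $o$, so its inverse lies in the same coset as $\s_{x^{-1}(\omega)}^{-1}$.
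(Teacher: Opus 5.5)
Your proposal is correct, and your closing alternative is exactly the paper's route. The paper gives no separate proof: it attributes the statement to the classical Reidemeister--Schreier theorem. The Remark right after it makes the same identification $(\omega,x)=\s_\omega x\,\overline{\s_\omega x}^{-1}$ with $\overline{\s_\omega x}=\s_{x^{-1}(\omega)}$, and obtains the relations by the same telescoping rewriting of $\s_\omega r\s_\omega^{-1}$ as your Step~1. Your Steps~1--4 go further and reprove the classical theorem in the stabilizer setting: well-definedness, image in $G_o$, surjectivity, and injectivity via the rewriting map $\tau$ with $\tau\circ\Psi_o=\mathrm{id}$. That buys self-containment, at the price of bookkeeping the paper outsources to the classical reference.

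One point should be tidied. A positive presentation does not by itself make every element a positive word, nor does it make $x^{-1}$ equal to some $x^k$ with $x^k\in R$; take $G=\sg{x}$ infinite cyclic with no relators, acting on two points. The fix is the convention you already mention: rewrite a letter $x^{-1}$ read at $\omega$ as $(x(\omega),x)^{-1}$, whose image under $\Psi_o$ is $\s_\omega x^{-1}\s_{x(\omega)}^{-1}$. With this convention Step~3 needs no positivity, and both $xx^{-1}$ and $x^{-1}x$ rewrite to freely trivial words. You can then drop the sentence about $x\,x^{o(x)-1}$. As written, that sentence would require $x^{o(x)}$ to be literally one of the relators, as $x^2$ is for Coxeter groups.
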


\begin{cor}\label{thestabis}
As a subgroup of $G$, the stabilizer is
$$G_o = \sg{\s_\omega x \s_{x^{-1}\omega}^{-1} \suchthat \omega \in V(\Gamma), x \in E(\Gamma)}.$$
\end{cor}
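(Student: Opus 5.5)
The corollary follows from \Pref{RMhere} applied to $G = C(\Gamma)$, generated by $X = E(\Gamma)$ with the positive Coxeter presentation of \Dref{CT}, acting on $\Omega = V(\Gamma)$. So the plan is to check the hypotheses of \Aref{RSalg} and then read off the image of $\Psi_o$.

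First I verify that the setting of \Aref{RSalg} applies. The presentation of $C(\Gamma)$ by relations (1)--(3) of \Tref{Serg} is positive. The action on $V(\Gamma)$ is transitive when $\Gamma$ is connected, because the edge transpositions generate $S_n$. A Schreier set exists: take a spanning tree $\Sigma$ rooted at $o$, and for each vertex $\omega$ let $\s_\omega$ be the word $\pi_{o\omega}$ along the unique tree path. By \eq{piact}, $\s_\omega(\omega)=o$. The set is closed under initial subwords, since an initial subword of $\pi_{o\omega}$ is $\pi_{o\omega'}$ for a vertex $\omega'$ on that path.

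Next, \Pref{RMhere} says that $\Psi_o\co\GG_0\ra G$ is an embedding with image exactly $G_o$. The image of a group under a homomorphism is generated by the images of the generators. The generators of $\GG_0$ are the pairs $(\omega,x)$, and their images are $\s_\omega x\s_{x^{-1}(\omega)}^{-1}$ by \eq{Psidef}. Hence $G_o$ is generated by these elements, with $\omega$ ranging over $V(\Gamma)$ and $x$ over $E(\Gamma)$. This is the displayed formula, with $x^{-1}\omega$ meaning $x^{-1}(\omega)$. One may also note that $x^{-1}=x$ as a transposition.

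The only point needing care is that \Pref{RMhere} was stated without proof as the classical Reidemeister--Schreier theorem, so I cite it. If a direct justification of the surjectivity onto $G_o$ were wanted, I would argue as follows. Take a word $w=x_1\cdots x_n$ fixing $o$, and insert $\s_{\omega_i}^{-1}\s_{\omega_i}$ between consecutive letters, where $\omega_i$ is the point reached when acting from the right end. This rewrites $w$ as a product of the elements $\Psi_o(\omega_i,x_i)$. The boundary terms are $\s_o=1$ at both ends, because $w$ fixes $o$; this bookkeeping of orbit points is the main (but routine) step.
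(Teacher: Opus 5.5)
Your proof is correct and is essentially the paper's. The paper states this as an immediate consequence of \Pref{RMhere}, since the image $G_o$ of $\Psi_o$ is generated by the images $\s_\omega x \s_{x^{-1}(\omega)}^{-1}$ of the generators $(\omega,x)$. Your optional rewriting sketch for surjectivity is also sound: with $\omega_i = x_i\cdots x_n(o)$, the left boundary $\omega_1=o$ uses $w(o)=o$, while $\omega_{n+1}=o$ holds automatically.
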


\begin{exmpl}\label{DY5}
Consider the group $C(\triangle) = \sg{x,y,z}$ defined in \Eref{DY}. This group acts on the three vertices of the triangle. Take the base point $o = y \cap z$. Taking the Schreier set $\set{1, y, z}$, we find that $C(\triangle)_o = \sg{x, yzy, zxy}$.

Similarly, consider the same group acting on the four vertices of $\Y$. Take $o$ to be the center point of the graph. The Schreier set is $\set{1,x,y,z}$ and we find that $C(\Y)_o = \sg{xyx,yzy,zxz}$. (Compare to \Eref{DY3}.\eq{DY3.6} below.)
\end{exmpl}

\begin{rem}
In computational implementations it is customary to a priori remove the redundant generators.
\end{rem}

\begin{rem}
In the notation of \cite{MKS}, the generator $(\omega,x)$ is
$$s_{\omega,x} = \s_\omega x \overline{\s_\omega x}^{-1} = \s_\omega x \s_{x^{-1}\s_\omega^{-1}(o)}^{-1} = \s_\omega x \s_{x^{-1}(\omega)}^{-1},$$ and the relation is obtained from rewriting $$\s_{\omega} r \s_{\omega}^{-1} = (\s_{\omega_1} x_1 \s_{\omega_2}^{-1}) ( \s_{\omega_2} x_2 \s_{\omega_3}^{-1}) \cdots (\s_{\omega_n} x_n \s_{\omega_{n+1}}^{-1})$$
where $\omega_{n+1} = \omega_1$.
\end{rem}

\section{The stabilizer of a point}\label{sec5}

Let $\Gamma$ be any connected simple graph. As before, $C(\Gamma)$ acts transpositionally on the vertex set. In this section we point out a set of generators for the stabilizer. Choose a spanning subtree $\Sigma \sub \Gamma$. In particular $V(\Sigma) = V(\Gamma)$ and $E(\Sigma) \sub E(\Gamma)$. \Eq{pidef} defines a system of paths $\pi_{ab}$. Reading \Cref{thestabis} in this setup, we obtain:

\begin{cor}\label{workstoo}
Let $\Gamma$ be a connected simple graph with a spanning subtree $\Sigma$. Let $o \in V(\Gamma)$. Then
$$C(\Gamma)_o = \sg{\pi_{o\omega} x \pi_{o \omega}^{-1} \ (\omega \not \in x), \quad \pi_{oa} x \pi_{ob}^{-1} \ (x = \set{a,b})}.$$
\end{cor}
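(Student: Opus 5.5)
The plan is to apply \Cref{thestabis} with $G = C(\Gamma)$, acting on $\Omega = V(\Gamma)$, and $X = E(\Gamma)$. As Schreier set we take $\s_\omega = \pi_{o\omega}$ for $\omega \in V(\Gamma)$, viewed as words in the free group on $E(\Gamma)$. The whole argument then comes down to checking that this is a valid Schreier set and evaluating the generators $\s_\omega x \s_{x^{-1}(\omega)}^{-1}$ of \Cref{thestabis}.

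\emph{Schreier condition.} By \eq{piact}, $\pi_{o\omega}(\omega) = o$, so $\s_\omega(\omega) = o$ holds for every $\omega$. For closure under initial subwords, let $o = v_0, v_1, \dots, v_t = \omega$ be the unique path in $\Sigma$ from $o$ to $\omega$, with edges $u_i = \set{v_{i-1},v_i}$. The initial subword $u_1\cdots u_i$ of $\pi_{o\omega}$ is exactly the word $\pi_{ov_i}$, because the path from $o$ to $v_i$ in the tree is the initial segment of the path from $o$ to $\omega$. Hence $\set{\pi_{o\omega}}$ is a Schreier set, and it contains the empty word $\pi_{oo} = 1$.

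\emph{Computing the generators.} Fix $\omega \in V(\Gamma)$ and $x \in E(\Gamma)$. Since $x$ acts as a transposition, $x^{-1} = x$, and there are two cases.
\begin{enumerate}
\item If $\omega \notin x$, then $x^{-1}(\omega) = \omega$, and the generator is $\pi_{o\omega} x \pi_{o\omega}^{-1}$.
\item If $\omega \in x$, write $x = \set{a,b}$. For $\omega = a$ we get $x^{-1}(a) = b$, and the generator is $\pi_{oa} x \pi_{ob}^{-1}$. For $\omega = b$ we get $\pi_{ob} x \pi_{oa}^{-1}$, which is the inverse of the previous element (using $x^2 = 1$). So it does not change the generated subgroup, and $x = \set{a,b}$ can be listed unordered.
\end{enumerate}
These two families are exactly the ones in the statement, so the claimed equality of subgroups follows from \Cref{thestabis}.

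There is no real obstacle. The only point needing care is the Schreier closure, which rests on uniqueness of paths in the tree $\Sigma$: the Schreier set must use the paths $\pi$ of $\Sigma$, even though the edges $x$ range over all of $E(\Gamma)$. It is also worth noting explicitly that the $\pi_{o\omega}$ are taken as words here, not as group elements, since that is what the Schreier condition requires.
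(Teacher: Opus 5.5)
Your proposal is correct and follows essentially the same route as the paper: the corollary is obtained by reading \Cref{thestabis} with the Schreier set $\s_a = \pi_{oa}$ given by tree paths. The paper justifies the Schreier property by the same initial-subword observation (made explicit at the start of \Sref{sec:pres}), and the two families of generators it gets (cf.\ \Rref{whatisPsi}) are exactly the ones you compute.
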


Following this corollary, we seek a concrete set of generators for the stabilizer. Let $\Gamma$ be a connected simple graph, and~$o$ a fixed vertex. The parabolic subgroup generated by the edges that are not adjacent to $o$ is clearly contained in the stabilizer, but this subgroup may be too small. For example, if $u,v$ meet at the vertex $o$, then the {\bf{bridge}} $uvu$ stabilizes $o$. See \Fref{bfb}.
Let $\set{u,v,w}$ be a fork centered at a vertex $\omega$, and assume $u_1,\dots,u_n$ is a path from $o$ to $\omega$, such that $u_n = u$. We call $u_1 \cdots u_n \cdot vwv \cdot u_n \cdots u_1$ a {\bf{pitchfork}} hitting $\set{u,v,w}$. This element stabilizes $o$. Similarly, let $w_1,\dots,w_m$ be a closed loop in the graph, and let $u_1,\dots,u_n$ be a path from $o$ to $w_1 \cap w_m$. We call $u_1 \cdots u_n \cdot w_1 \cdots w_m \cdot u_n \cdots u_1$ a {\bf{balloon}}, and notice that it too stabilizes~$o$.
\begin{figure}
$$\xymatrix{
*={\bullet} \ar@{-}[r] \ar@/^8pt/@{.}[rr] & o  \ar@{-}[r] & *={\bullet}
}
\qquad
\xy
    (8,0)*+{o};(16,0)  **\crv{\dir{*}} ?(1)*\dir{*};
    (16,0);(24,0)  **\crv{\dir{*}} ?(0)*\dir{*}?(1)*\dir{*};
    (24,0);(32,0)  **\crv{\dir{*}} ?(0)*\dir{*}?(1)*\dir{*};
    (32,0);(40,4)  **\crv{\dir{*}} ?(0)*\dir{*}?(1)*\dir{*};
    (32,0);(40,-4)  **\crv{\dir{*}} ?(0)*\dir{*}?(1)*\dir{*};
    (9,1);(9,-1)**\crv{~*{.}(10,2) & (32,2) & (34,4) & (44,8) & (44,-8) & (34,-4) & (32,-2) & (10,-2)} 
\endxy
\qquad
\xy
    (8,0)*+{o};(16,0)  **\crv{\dir{*}} ?(1)*\dir{*};
    (16,0);(24,0)  **\crv{\dir{*}} ?(0)*\dir{*}?(1)*\dir{*};
    (24,0);(32,0)  **\crv{\dir{*}} ?(0)*\dir{*}?(1)*\dir{*};
    (32,0);(40,6)  **\crv{\dir{*}} ?(0)*\dir{*}?(1)*\dir{*};
    (32,0);(40,-6)  **\crv{\dir{*}} ?(0)*\dir{*}?(1)*\dir{*};
    (40,6);(48,0)  **\crv{\dir{*}} ?(0)*\dir{*}?(1)*\dir{*};
    (40,-6);(48,0)  **\crv{\dir{*}} ?(0)*\dir{*}?(1)*\dir{*};
    (9,1);(9,-1)  **\crv{~*{.}(10,2) & (28,2) & (32,3) & (40,10) & (50,3) & (52,0) & (50,-3) & (40,-10) & (32,-3) & (28,-2) & (10,-2)}
\endxy
$$
\caption{A bridge, a pitchfork, and a balloon}\label{bfb}
\end{figure}

\begin{thm}\label{this5.2}
Let $\Gamma$ be a connected simple graph. Let $o \in V(\Gamma)$ be any vertex. The stabilizer $C(\Gamma)_o$ is generated by
\begin{enumerate}
\item the edges $x \in E(\Gamma)$ not adjacent to $o$;
\item the bridges over $o$;
\item the pitchforks based in $o$; and
\item the balloons based in $o$.
\end{enumerate}
\end{thm}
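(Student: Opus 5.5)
The plan is to start from \Cref{workstoo}. After choosing a spanning subtree $\Sigma\sub\Gamma$, the stabilizer $C(\Gamma)_o$ is generated by the elements $\pi_{o\omega}x\pi_{o\omega}^{-1}$ with $\omega\notin x$, and $\pi_{oa}x\pi_{ob}^{-1}$ with $x=\set{a,b}$. Let $H$ be the subgroup generated by the four families in the statement. Each of these families stabilizes $o$, as observed before the theorem, so $H\sub C(\Gamma)_o$. It therefore suffices to show that every Schreier generator lies in $H$. I would choose $\Sigma$ to be a \emph{breadth-first} (geodesic) spanning tree rooted at $o$, although any spanning tree should work with the same induction.

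First I treat the generators with $x=\set{a,b}\in E(\Sigma)$, say $a$ closer to $o$. Then $\pi_{ob}=\pi_{oa}x$, so $\pi_{oa}x\pi_{ob}^{-1}=1$; these are the redundant pairs. If $x\notin E(\Sigma)$, then $\pi_{oa}x\pi_{bo}$ is a closed walk from $o$ going out to $a$, across $x$, and back from $b$. Let $c$ be the meeting vertex of the two tree paths. Then
\[
\pi_{oa}x\pi_{bo}=\pi_{oc}\,(\pi_{ca}\,x\,\pi_{bc})\,\pi_{co},
\]
and $\pi_{ca}x\pi_{bc}$ is a simple cycle through $c$, so this generator is a balloon.

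Next I treat $\pi_{o\omega}x\pi_{o\omega}^{-1}$ with $\omega\notin x$, by induction on the length of $\pi_{o\omega}$. If $\omega=o$, then $x$ is not adjacent to $o$, which is family (1). Otherwise write $\pi_{o\omega}=\pi_{o\omega'}u$ with $u=\set{\omega',\omega}$. Then the generator equals $\pi_{o\omega'}(uxu)\pi_{o\omega'}^{-1}$, and I split into cases.
\begin{enumerate}
\item If $x$ is disjoint from $u$, then $uxu=x$, and since $\omega'\notin x$ the induction hypothesis applies.
\item If $x$ meets $u$ only at $\omega'$, then $uxu=xux$. So the element is $g\,h\,g$ with $g=\pi_{o\omega'}x\pi_{o\omega'}^{-1}$ and $h=\pi_{o\omega'}u\pi_{o\omega'}^{-1}$. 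Both $g$ and $h$ are conjugates by the path to $\omega'$ of edges at $\omega'$.
\item If $x$ meets $u$ only at $\omega$, then $uxu$ is a bridge-type element at $\omega$, hitting $x$ and $u$.
\end{enumerate}

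Case (2) is the main obstacle. The elements $g$ and $h$ are not in $C(\Gamma)_o$ individually; only certain products are. I would therefore strengthen the induction hypothesis to a statement about such products. Fix the previous tree edge $v$ on the path to $\omega'$, so that $\pi_{o\omega'}=\pi_{o\omega''}v$. Then for any two edges $x,u\neq v$ at $\omega'$ I would rewrite
\[
\pi_{o\omega'}x\,u\,x\,\pi_{o\omega'}^{-1}=\pi_{o\omega''}\,(v\,xux\,v)\,\pi_{o\omega''}^{-1}.
\]
When $x,u,v$ are distinct edges at $\omega'$, the middle factor $v\,xux\,v$ is exactly a pitchfork centered at $\omega'$ hitting $\set{v,x,u}$; at $\omega'=o$ the corresponding element is a bridge. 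If instead $x$ or $u$ closes a cycle with the tree path, then the element is a balloon, using the relation $(xu)^3=1$ to rearrange the word.

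Case (3) reduces to case (2) by writing $uxu=xux$ at the vertex $\omega$. So the whole argument amounts to careful bookkeeping of which local configuration (a fork, a bridge, or a cycle) appears at the vertex where the path turns. I expect to check the general pattern against the explicit computation of \Eref{DY5} for $\Y$, where the answer should be $\sg{xyx,yzy,zxz}$.
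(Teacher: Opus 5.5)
Your overall route is the paper's. You start from \Cref{workstoo}, dispose of the generators $\pi_{oa}x\pi_{ob}^{-1}$ as trivial or as balloons exactly as the paper does, and identify each $\pi_{o\omega}x\pi_{o\omega}^{-1}$ as an edge, a bridge or a pitchfork. Your induction that peels off the last edge of $\pi_{o\omega}$ is a stepwise version of the paper's observation: the path edges beyond the last vertex where $x$ touches the path are disjoint from $x$ and cancel. However, your case (2), which carries the whole argument, is muddled, and as written it is incomplete and partly wrong.

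First, there is no obstacle there and no strengthened hypothesis is needed. Writing $\pi_{o\omega'}=\pi_{o\omega''}v$, the element $\pi_{o\omega'}(uxu)\pi_{o\omega'}^{-1}=\pi_{o\omega''}v\cdot uxu\cdot v\pi_{o\omega''}^{-1}$ is by definition the pitchfork hitting the fork $\set{v,u,x}$ whenever $x\neq v$, and it is the bridge $uxu$ when $\omega'=o$. The factorization $ghg$ is a red herring.

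Second, you never treat $x=v$, which does occur in case (2); it is case (iv) of the paper's proof, where $x$ is an edge of the path. Then the element equals $\pi_{o\omega''}u\pi_{o\omega''}^{-1}$ with $\omega''\notin u$, so the induction hypothesis applies (in fact the element is just $u$).

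Third, your claim that the element is a balloon when $x$ closes a cycle with the tree path is wrong. The element is a nontrivial involution lying in the kernel of the projection of \Cref{free}. A balloon, by contrast, maps to the class of its loop in the free group $\pi_1(\Gamma)$, and is therefore either trivial or of infinite order. Such a chord $x$ from $\omega'$ back to the path still gives three distinct edges $v,u,x$ at $\omega'$, so the element is again a pitchfork. This is the paper's case (v), with $\omega'$ the farther contact point, and it is already covered by your ``distinct edges'' clause.

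Finally, your case (3) is vacuous, since $\omega\notin x$ by hypothesis. With these corrections your argument coincides with the paper's.
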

\begin{proof}
Let $\Sigma$ be a spanning subtree, and $\pi_{ab}$ the associated system of paths as in \Ssref{ss:pi}. Fix a vertex $o$. We compute the generators given in \Cref{workstoo}.

First let $\omega$ be any vertex, and $x$ an edge such that $\omega \not \in x$. Let $u_1,\dots,u_n$ be the path from $o$ to $\omega$ in the tree.
(i) If $x$ does not touch the path then $\pi_{o\omega}x\pi_{o\omega}^{-1} = x$.
(ii) If $x$ touches the path once, at $\omega' = u_i \cap u_{i+1}$, then $\pi_{o\omega}x\pi_{o\omega}^{-1}$ is a pitchfork hitting $\set{u_i,u_{i+1},x}$.
(iii) If $x$ touches the path once, at $o$, then $\pi_{o\omega}x\pi_{o\omega}^{-1} = u_1 x u_1$ is the bridge $u_1 x u_1$.
(iv) If $x = u_i$ on the path then $\pi_{o\omega}x\pi_{o\omega}^{-1} = u_{i+1}$.
(v) If $x$ touches the path twice, the outcome is as in case (ii) with $\omega'$ the farthest of the two.

Secondly, let $x = \set{a,b}$ be any edge. If $x \in \Sigma$ then $\pi_{oa}x\pi_{ob}^{-1} = 1$ as in \Rref{whotriv}. Otherwise, let $c$ be the departure point of the paths from $o$ to $a$ and $b$. Then $\pi_{oa} x \pi_{ob}^{-1} = \pi_{oc} \pi_{ca} x \pi_{bc} \pi_{co}$ is a balloon.
\end{proof}

The set of all pitchforks and balloons is unnecessarily large. The same proof leads to a fairly economic set of generators:
\begin{cor}
Let $\Gamma$ be a connected simple graph, with a spanning subtree $\Sigma$. Let $o \in V(\Gamma)$ be any vertex. The stabilizer $C(\Gamma)_o$ is generated by
\begin{enumerate}
\item  the edges $x \in E(\Gamma)$ not adjacent to $o$; \item the bridges $uvu$ over $o$ when at least one of the edges is in $\Sigma$;
\item  the pitchforks whose handle is from $\Sigma$ and at most one edge of the fork is not in $\Sigma$; and
\item  the balloons where the string is from $\Sigma$ and exactly one edge of the cycle is not from $\Sigma$.
\end{enumerate}
\end{cor}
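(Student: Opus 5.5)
The plan is to rerun the computation from the proof of \Tref{this5.2}, this time keeping track of which generators appear when the Schreier set is the tree-path system $\set{\pi_{o\omega}}$ of the fixed spanning subtree $\Sigma$. By \Cref{workstoo}, $C(\Gamma)_o$ is generated by the elements $\pi_{o\omega} x \pi_{o\omega}^{-1}$ with $\omega \notin x$, together with the elements $\pi_{oa} x \pi_{ob}^{-1}$ with $x = \set{a,b}$. So it suffices to show that every such element either is trivial or is one of the listed generators (1)--(4), or is a product of them. I will go through the case analysis of \Tref{this5.2} again and, in each case, record how many edges of $\Sigma$ occur.

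Consider first the conjugates $\pi_{o\omega} x \pi_{o\omega}^{-1}$. Here $u_1,\dots,u_n$ denotes the $\Sigma$-path from $o$ to $\omega$, so every $u_i$ lies in $\Sigma$.
\begin{enumerate}
\item In case (i), the result is $x$ itself, an edge not adjacent to $o$.
\item In case (iv), the result is again a single edge $u_{i+1}$. It is not adjacent to $o$, because $i+1\ge 2$.
\item In case (iii), we get the bridge $u_1 x u_1$, and $u_1 \in \Sigma$.
\item In cases (ii) and (v), we get a pitchfork $u_1\cdots u_i \cdot u_{i+1} x u_{i+1} \cdot u_i \cdots u_1$. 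Its handle $u_1,\dots,u_i$ lies in $\Sigma$, and its fork $\set{u_i,u_{i+1},x}$ has $u_i,u_{i+1}\in\Sigma$, so at most the single edge $x$ of the fork falls outside $\Sigma$.
\end{enumerate}
I need to be careful with case (v). There $x$ touches the path at two vertices, so $x$ is a chord of $\Sigma$. The conjugation first pushes $x$ past the part of the path beyond the farther touching point, and after that it becomes a pitchfork at that farther point. I will verify this explicitly, because it is the only place where the order of simplification matters. The general principle is that conjugating $x$ by a path edge disjoint from it changes nothing, while conjugating by an adjacent path edge produces the fork pattern. Among these pitchforks, only forks with at most one non-$\Sigma$ edge appear, namely $x$ itself.

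Next consider the elements $\pi_{oa} x \pi_{ob}^{-1}$. If $x\in\Sigma$, this element equals $1$ by \Eq{piact} and \Pref{pitrans}, since $\pi_{ob} = \pi_{oa}\pi_{ab}$ and $\pi_{ab}=x$. If $x\notin\Sigma$, let $c$ be the point where the $\Sigma$-paths to $a$ and $b$ diverge. Then the element equals $\pi_{oc}\cdot(\pi_{ca}\, x\, \pi_{bc})\cdot \pi_{co}$. This is a balloon whose string $\pi_{oc}$ lies in $\Sigma$ and whose cycle is the fundamental cycle of $x$ with respect to $\Sigma$. In that cycle every edge except $x$ lies in $\Sigma$, so exactly one edge of the cycle is not from $\Sigma$.

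Combining the two cases, every generator from \Cref{workstoo} is trivial or belongs to one of the families (1)--(4) listed in the statement. Conversely, each listed element is a bridge, a pitchfork, a balloon, or a non-adjacent edge, and all of these stabilize $o$, as already observed before \Tref{this5.2}. Hence these families generate exactly $C(\Gamma)_o$. The only genuinely delicate step is case (v), where $x$ meets the path twice, since there the fork must be read off at the farther vertex. Everything else is bookkeeping of which edges lie in $\Sigma$.
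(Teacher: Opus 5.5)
Your proposal is correct and takes the paper's route: the corollary is obtained by rerunning the case analysis in the proof of \Tref{this5.2} with the tree-path Schreier set $\set{\pi_{o\omega}}$ and recording which edges lie in $\Sigma$, as you do. Case (v) is less delicate than you suggest. Once $x$ is commuted past $u_{k+2},\dots,u_n$, the remaining word $u_1\cdots u_k\cdot u_{k+1}xu_{k+1}\cdot u_k\cdots u_1$ is already a pitchfork at the farther touching point, and the chord $x\notin\Sigma$ is the only non-tree edge of its fork.
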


In \Eref{DY5} we saw that $C(\Y)_o$, where $o$ is the center point, is generated by bridges. Here are a few other examples.

\begin{exmpl}\label{e55}
Let $\Gamma$ be the graph in \Fref{3inf}. Take $\Sigma = \Gamma - \set{z'}$ to be the spanning subtree. Take $o = 1$. Applying \Cref{workstoo}, we find that $C(\Gamma)_o$ is generated by the parabolic subgroup $\sg{v,x,y,z,y',z',x',p,q}$; the four pitchforks $uvxvu$, $uxyzyxu$, $uxyy'x'pqpx'y'yxu$, and $uxyy'x'z'x'y'yxu$; and the balloon $uxzz'y'yxu$ around the unique cycle.
\end{exmpl}

\begin{figure}
$$\xymatrix@R=6pt@C=10pt{
1 \ar@{-}[rd]^u & && 5 \ar@{-}[rd]^{y'} && & 9 \ar@{-}[ld]_p \\
{} & 3  \ar@{-}[r]^x \ar@{-}[ld]^v & 4 \ar@{-}[ru]^y \ar@{-}[rd]_z &  & 7 \ar@{-}[r]^{x'} & 8 \ar@{-}[rd]_q  \\
2 & && 6 \ar@{-}[ru]_{z'} && & 10
}
$$
\caption{The graph of \Eref{e55}} \label{3inf}
\end{figure}

The union of paths joined in a central point is called a {\bf{generalized star}}.
\begin{exmpl}
Let $\Sigma$ be a generalized star. Denote the central vertex by $o$ and the edges adjacent to it by $v_1,\dots,v_k$. Then $C(\Sigma)_o$ is generated by the parabolic subgroup $\sg{E(\Sigma)-\set{v_1,\dots,v_k}}$ and the bridges $v_i v_j v_i$.
\end{exmpl}

\begin{exmpl}\label{Noa1}
Let $\Gamma = K_{n+1}$ be the complete graph on $n+1$ vertices. Denote the vertices by $0,1,\dots,n$, and the edges by $u_i = \set{0,i}$ and $v_{ij} = \set{i,j}$ for distinct $i,j \neq 0$. Then $C(\Gamma)_0$ is generated by the edges $v_{ij}$, the bridges $u_i u_j u_i$, and the balloons $u_i v_{ij} u_j$.
\end{exmpl}

\subsection{The tree case}\label{5*2}

The edges, bridges, and pitchforks occurring in \Tref{this5.2} are
reflections in $C(\Sigma)$. Since there are no balloons when
$\Sigma$ is a tree, the stabilizer $C(\Sigma)_o$ is generated by
reflections. By the theorems of Deodhar and Dyer \cite{Deodhar,Dyer},
it is therefore a Coxeter group.

\begin{cor}\label{here77}
Let $\Sigma$ be a tree. Then the point stabilizer $C(\Sigma)_o$ is a reflection subgroup of $C(\Sigma)$, and hence is itself a Coxeter group.
\end{cor}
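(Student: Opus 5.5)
We combine \Tref{this5.2} with the Deodhar--Dyer theorem, as the preceding paragraph indicates. Two facts are needed: the generators listed in \Tref{this5.2} are reflections of $C(\Sigma)$, meaning conjugates of the Coxeter generators $E(\Sigma)$ of $W(\Line{\Sigma})$; and no other kind of generator is required. Once the stabilizer is known to be generated by reflections, it is a Coxeter group by \cite{Deodhar,Dyer}.

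First we apply \Tref{this5.2} with $\Gamma = \Sigma$, taking $\Sigma$ as its own spanning subtree. The generators are of four kinds: edges not adjacent to $o$, bridges over $o$, pitchforks based at $o$, and balloons based at $o$. A balloon needs a closed loop $w_1,\dots,w_m$ in the graph, and a tree has none, so the fourth kind does not occur. This also follows from the proof of \Tref{this5.2}: each element $\pi_{oa}x\pi_{ob}^{-1}$ with $x=\set{a,b}\in E(\Sigma)$ is trivial. So $C(\Sigma)_o$ is generated by the first three kinds.

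Next we check that each remaining generator is a reflection. An edge $x$ is a Coxeter generator of $C(\Sigma)=W(\Line{\Sigma})$ by \Cref{C=WL}. A bridge $uvu$ is the conjugate of the generator $v$ by the involution $u$. A pitchfork $u_1\cdots u_n\cdot vwv\cdot u_n\cdots u_1$ equals $g\,w\,g^{-1}$ with $g = u_1\cdots u_n v$, because $(u_1\cdots u_n)^{-1}=u_n\cdots u_1$ and $v^{-1}=v$. Each generator is therefore a conjugate of an element of $S=E(\Sigma)$, that is, a reflection in the Coxeter system $(W(\Line{\Sigma}),E(\Sigma))$. Hence $C(\Sigma)_o$ is the reflection subgroup generated by these elements.

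The Coxeter structure now comes from the Deodhar--Dyer theorem \cite{Deodhar,Dyer}: every subgroup of a Coxeter group generated by reflections is itself a Coxeter group, with a canonical set of Coxeter generators. No step here is difficult. The only point needing care is that the case analysis in the proof of \Tref{this5.2} really yields only edges, bridges and pitchforks when $\Gamma$ is a tree. Case (iv) gives an edge $u_{i+1}$ of the path, which is still a reflection; case (v) produces a pitchfork, as in case (ii). The tree hypothesis enters only through the disappearance of balloons, and this is what separates the tree case from the general case; a balloon need not be a reflection. Identifying the resulting Coxeter diagram as $\covg{\Sigma}$ is a separate and harder problem, which this corollary does not address.
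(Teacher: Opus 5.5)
Your proposal is correct and follows the paper's argument: by \Tref{this5.2} with no balloons in a tree, the stabilizer is generated by edges, bridges and pitchforks, all of which are conjugates of Coxeter generators, and the Deodhar--Dyer theorem then makes it a Coxeter group. Your explicit conjugator $g = u_1\cdots u_n v$ for a pitchfork spells out a step the paper only asserts.
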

\Sref{sec:6} is devoted to identifying the Coxeter diagram of the stabilizer.

\section{A presentation for the stabilizer}\label{sec:pres}

We now restart the previous section and aim higher, namely for a presentation of the stabilizer. Let $\Gamma$ be any connected simple graph, and $C(\Gamma)$ the associated Coxeter group. Our goal is to find a presentation for the stabilizer $C(\Gamma)_o$, where $o$ is a vertex of $\Gamma$. Choose a spanning subtree $\Sigma \sub \Gamma$. In particular $V(\Sigma) = V(\Gamma)$ and $E(\Sigma) \sub E(\Gamma)$.

To obtain a presentation for the stabilizer $C(\Gamma)_o$, we apply \Aref{RSalg} to the group $G = C(\Gamma)$ acting on the vertex set. Using \eq{pidef} for paths in the spanning tree~$\Sigma$, we take $\s_{a} = \pi_{oa}$. The set of such $\s_{a}$ ($a \in V(\Sigma)$) is a Schreier set because the initial subword $u_1 \dots u_{t'}$ of $\s_a = u_1 \cdots u_t$ is~$\s_{a'}$ for $a' = (u_1 \cdots u_{t'})^{-1}o$. Since the presentation depends on the choice of the Schreier set, which for us is determined by the choice of the spanning subtree, we denote the abstract group defined by the algorithm by~$\GG_0(\Sigma,\Gamma)$. By Reidemeister-Schreier (\Pref{RMhere}) we know that $C(\Gamma)_o \isom \GG_0(\Sigma,\Gamma)$ for any spanning subtree $\Sigma$.

By definition, $\GG_0(\Sigma,\Gamma)$ is generated by the pairs $(a,x)$ where $a \in V(\Sigma)$ and $x \in E(\Gamma)$, but some generators are redundant.
\begin{rem}\label{whotriv}
For any edge $x$ of the spanning tree $\Sigma$, if $a$ is a vertex on $x$ then $(a,x)$ is redundant or trivial.

Indeed, by assumption $xa$ is a neighbor of $a$ in the tree. If $\dist[\Sigma]{o}{xa}> \dist[\Sigma]{o}{a}$ then $(a,x)$ is redundant because $\s_{xa} = \s_a x$ by definition. On the other hand if $\dist[\Sigma]{o}{xa} < \dist[\Sigma]{o}{a}$ then the interpretation of the relation $x^2 = 1$ at~$a$ is $(a,x)(xa,x) = 1$, but since $(xa,x)$ is redundant we get $(a,x) = 1$.
\end{rem}
In light of this remark, and to simplify notation, we will include redundant generators in the presentation, together with the relation defining them to be trivial.
Now $\GG_0(\Sigma,\Gamma)$ is generated by all the pairs $(a,x)$ (for all $a \in V(\Sigma) = V(\Gamma)$ and $x \in E(\Gamma)$), and $(a,x) = 1$ if $a \in x \in E(\Sigma)$.

\begin{prop}\label{52}
The generators of $\GG_0(\Sigma,\Gamma)$ satisfy the following identities, for any $x \in E(\Gamma)$:
\begin{enumerate}
\item If $xa = a$ then $(a,x)^2 = 1$.
\item If $xa \neq a$ then $(xa,x) = (a,x)^{-1}$. If, furthermore, $x \in E(\Sigma)$, then $(xa,x)=(a,x)=1$.
\end{enumerate}
\end{prop}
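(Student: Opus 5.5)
The plan is to read off both identities from the relation $x^2=1$ in the presentation of $C(\Gamma)$, interpreted via \Aref{RSalg}. The relation $x^2 = 1$ is positive, of the form $r = x_1x_2$ with $x_1 = x_2 = x$. Its interpretation at a vertex $a$ uses the sequence $\omega_1 = a$ and $\omega_2 = x^{-1}(a) = xa$, since $x$ is an involution acting as a transposition. This gives the defining relation
$$(a,x)(xa,x) = 1$$
of $\GG_0(\Sigma,\Gamma)$ for every $a \in V(\Gamma)$ and $x \in E(\Gamma)$.

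For (1), suppose $xa = a$, that is, $a \notin x$. Then $\omega_2 = \omega_1 = a$, and the relation reads $(a,x)^2 = 1$.

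For (2), suppose $xa \neq a$. The same relation gives $(xa,x) = (a,x)^{-1}$ directly. If furthermore $x \in E(\Sigma)$, then $a \in x$, since $a$ is moved by $x$. By \Rref{whotriv}, or by the convention adopted right after it, $(a,x) = 1$. The vertex $xa$ also lies on $x$, so $(xa,x) = 1$ as well; alternatively, this follows from $(xa,x) = (a,x)^{-1} = 1$.

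There is no real obstacle. The only point needing care is the direction of the action in the algorithm, $\omega_{i+1} = x_i^{-1}(\omega_i)$, and this is harmless here because $x^{-1}$ and $x$ act identically.
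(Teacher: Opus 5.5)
Your proposal is correct and follows essentially the same route as the paper. The paper also interprets $x^2=1$ at $a$ to get $(a,x)(xa,x)=1$, reads off both identities from this relation, and invokes \Rref{whotriv} for the case $x\in E(\Sigma)$.
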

Consequently, we say that the generator $(a,x)$ is of {\bf{type $2$}} if $xa = a$ and {\bf{type~$\infty$}} if $xa \neq a$, reflecting the order of the generator. We will prove below that generators of type $\infty$, for edges not in $\Sigma$, are indeed of infinite order.
\begin{proof}[Proof of \Pref{52}]
Both claims follow by interpreting the relation $x^2 = 1$ at various vertices.
First assume that $xa = a$, namely $a$ is not a vertex on $x$. The interpretation by \eq{algo} is $(a,x)(a,x)=1$, which proves the first claim. Similarly if~$a$ is a vertex on $x$, then $b = xa$ is the other vertex, and the interpretation at $a$ is $(a,x)(b,x)=1$, which shows that $(b,x) = (a,x)^{-1}$. As remarked above, if in this case $x \in E(\Sigma)$ then $(a,x) = (b,x) = 1$.
\end{proof}

To distinguish the types, we will henceforth write $x_a$ for the generators $(a,x)$ of type $\infty$. \begin{rem}\label{whatisPsi}
After establishing the notation for the generators of $\GG_0(\Sigma,\Gamma)$, let us make explicit the injection $\Psi_o \co \GG_0(\Sigma,\Gamma) \ra C(\Gamma)$, whose image is the stabilizer~$C(\Gamma)_o$. By \eq{Psidef} and our choice of the $\s_a$, this map is defined, for any $x \in E(\Gamma)$, by
\begin{eqnarray}
  (\omega,x) & \mapsto & \pi_{o \omega} x \pi_{o \omega}^{-1}  \qquad \omega \not \in x
\label{Psidefhere1} \\
  x_a & \mapsto & \pi_{oa} x \pi_{ob}^{-1} \qquad x = \set{a,b}
\label{Psidefhere2}
\end{eqnarray}
\end{rem}

Applying Algorithm~\ref{RSalg} to the presentation given in \Dref{CT} results in the following.
\begin{prop}\label{thepres}
The group $\GG_0(\Sigma,\Gamma)$ is generated by two types of generators:
\begin{enumerate}
\item $(\omega,x)$ for any edge $x \in E(\Gamma)$ and any vertex~$\omega$ not on $x$;
\item $x_a$        for any edge $x \in E(\Gamma)$ and any vertex $a \in x$;
\end{enumerate}
subject to the following relations:
\begin{enumerate}
\item For every edge $x = \set{a,b}$ of $\Gamma$: \label{54.1}
\begin{enumerate}
\item $(\omega,x)^2 = 1$ \quad for every $\omega \neq a,b$;
\item $x_ax_b = 1$;
\item\label{54.1c} $x_a = x_b = 1$ \quad when $x \in E(\Sigma)$;
\end{enumerate}
\item For every pair $x = \set{a,b}$ and $y = \set{c,d}$ of disjoint edges in $\Gamma$:
\begin{enumerate}
\item\label{54.2a} $[(\omega,x),(\omega,y)] = 1$ \quad for every $\omega \neq a,b,c,d$;
\item\label{54.2b} $x_a(b,y)x_a^{-1} = (a,y)$;
\item\label{54.2c} $y_c(d,x)y_c^{-1} = (c,x)$;
\end{enumerate}
\item For every pair $x = \set{a,b}$ and $y = \set{b,c}$ of (distinct) intersecting edges in~$\Gamma$:
\begin{enumerate}
\item\label{54.3a} $(\omega,x)(\omega,y)(\omega,x) = (\omega,y)(\omega,x)(\omega,y)$ \quad for every $\omega \neq a,b,c$;
\item\label{54.3b} $y_b(c,x)y_b^{-1} = x_b(a,y)x_b^{-1}$. 
\end{enumerate}
\end{enumerate}
\end{prop}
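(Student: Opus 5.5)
The plan is to apply \Aref{RSalg} literally to the positive presentation of $C(\Gamma)$ from \Dref{CT}, with the relations written as the words $x\,x$, $(xy)^2 = xyxy$ and $(xy)^3 = xyxyxy$, and with Schreier set $\s_a = \pi_{oa}$. The generators are the pairs $(\omega,x)$. I will write $x_a$ for $(a,x)$ when $a\in x$, as in \Pref{52}. Relations of type (1) in the algorithm (redundant pairs set to $1$) have already been traded, via \Rref{whotriv}, for the relation $x_a=x_b=1$ when $x\in E(\Sigma)$. This gives relation (1c).

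First I interpret $x^2$ at each vertex. The sequence is $\omega_1=\omega$, $\omega_2=x\omega$. If $\omega\notin x$ this gives $(\omega,x)^2=1$, which is (1a). If $\omega=a\in x$ it gives $x_ax_b=1$, which is (1b). This is exactly the content of \Pref{52}.

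Next I interpret $(xy)^2$ for disjoint $x=\{a,b\}$, $y=\{c,d\}$ at each $\omega$. The sequence of points is $\omega, x\omega, yx\omega, xyx\omega$. If $\omega\notin x\cup y$, all points equal $\omega$, and the relation reads $(\omega,x)(\omega,y)(\omega,x)(\omega,y)=1$. Using $(\omega,\cdot)^2=1$, this becomes the commutator (2a). At $\omega=a$ the points are $a,b,b,a$, giving $x_a(b,y)x_b(a,y)=1$. With $x_b=x_a^{-1}$ and $(a,y)^2=1$, this becomes (2b). Starting at $b$ gives the same relation after inversion. Starting at $c,d$ gives (2c) symmetrically.

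Then I interpret $(xy)^3$ for $x=\{a,b\}$, $y=\{b,c\}$. The word is read as $x,y,x,y,x,y$ with $\omega_{i+1}=x_i^{-1}\omega_i$. For $\omega\notin\{a,b,c\}$ all points are fixed, and the relation is the braid relation (3a) after using the involution relations. For $\omega=a$ the points are $a\to b\to c\to c\to b\to a\to a$. The relation reads
\[
x_a\,y_b\,(c,x)\,x_b\cdots
\]
and I will track it carefully. I expect it to reduce to $x_a y_b (c,x) y_c x_b (a,y)=1$. Using $y_c=y_b^{-1}$ and $x_b=x_a^{-1}$, this rearranges to $y_b(c,x)y_b^{-1}=x_b(a,y)x_b^{-1}$ (after conjugating by $x_a$ and using $(a,y)^{-1}=(a,y)$), which is (3b). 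The interpretations at $b$ and at $c$ are cyclic conjugates or inverses of the same relation, so they are consequences. Interpreting the word $yx\cdots$ order is unnecessary, since the relation $(xy)^3$ is only listed once, or else gives the inverse.

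The main obstacle is bookkeeping. I need to check that each interpretation at the other starting vertices is a consequence of the listed ones, and that the order conventions ($\omega_{i+1}=x_i^{-1}\omega_i$, with $x_i^{-1}=x_i$) yield exactly the stated forms rather than conjugates of them. Once every interpretation is accounted for, \Pref{RMhere} shows this is a complete presentation of $\GG_0(\Sigma,\Gamma)$.
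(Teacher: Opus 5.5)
Your proposal is correct and follows the paper's own proof. You apply \Aref{RSalg} with Schreier set $\s_a=\pi_{oa}$, absorb the redundant pairs via \Rref{whotriv}, and interpret $xx$, $xyxy$, $xyxyxy$ at every vertex. At $\omega=a$ your word $x_a\,y_b\,(c,x)\,y_c\,x_b\,(a,y)=1$ is exactly the paper's, and the interpretations at $b$ and $c$ are cyclic rotations of it.
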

\begin{proof}
The relations are obtained by interpreting $xx = 1$, $xyxy=1$ and $xyxyxy = 1$ at every vertex, as in \Eq{algo}. {}From $xx=1$ we get $(\omega,x)(x \omega, x) =1$, which, taking into account that $(a,x)=1$ when $x \in E(\Sigma)$, is the part \eq{54.1}. Similarly when $x=\set{a,b}$ and $y = \set{c,d}$, $xyxy=1$ gives $(\omega,x)(x \omega ,y)(yx\omega,x)(xyx\omega,y) =1$, which reduces to \eq{54.2a} if $\omega \neq a,b,c,d$; to $(a,x)(b,y)(b,x)(a,y)=1$ when $\omega = a$, and so on. Finally for $x = \set{a,b}$ and $y = \set{b,c}$, $xyxyxy=1$ gives $$(\omega,x)(x\omega,y)(yx\omega,x)(xyx\omega,y)(yxyx\omega,x)(xyxyx\omega,y) = 1,$$
which is \eq{54.3a} if $\omega \neq a,b,c$; and to
                    $(a,x)(b,y)(c,x)(c,y)(b,x)(a,y) = 1$ for $\omega = a$,
          $(b,x)(a,y)(a,x)(b,y)(c,x)(c,y) = 1$ for $\omega = b$, and
$(c,x)(c,y)(b,x)(a,y)(a,x)(b,y) = 1$ for $\omega = c$ -- which are all rotations of \eq{54.3b}.
\end{proof}

The fundamental group $\pi_1(\Gamma)$ of the graph $\Gamma$ is the free group of rank $r$, where $r = \card{E(\Gamma)-E(\Sigma)}$ is the number of cycles, namely the number of excess edges over the spanning tree.
\begin{cor}\label{free}
There is a projection $\GG_0(\Sigma,\Gamma) \ra \pi_1(\Gamma)$.
\end{cor}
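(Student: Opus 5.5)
We construct the projection directly from the presentation of $\GG_0(\Sigma,\Gamma)$ in \Pref{thepres}. Let $r = \card{E(\Gamma)-E(\Sigma)}$, and identify $\pi_1(\Gamma)$ with the free group $F$ on the symbols $t_x$, one for each edge $x \in E(\Gamma)-E(\Sigma)$. For every such edge, fix an orientation $x = \set{a,b}$ with $a$ chosen first. We define the map on generators:
\begin{enumerate}
\item every type $2$ generator $(\omega,x)$ is sent to $1$;
\item $x_a \mapsto t_x$ and $x_b \mapsto t_x^{-1}$ when $x \notin E(\Sigma)$;
\item $x_a, x_b \mapsto 1$ when $x \in E(\Sigma)$.
\end{enumerate}
Topologically, $x_a$ corresponds to the balloon that traverses $x$ from $a$ to $b$, which is the standard free generator of $\pi_1(\Gamma)$ relative to the spanning tree $\Sigma$. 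This justifies the identification with $\pi_1(\Gamma)$.

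Next we check each family of relations. Relations (1a), (2a) and (3a) involve only type $2$ generators, so their images are trivial. Relation (1b) maps to $t_xt_x^{-1}=1$, and (1c) holds by construction. Relations (2b) and (2c) become $t^{\pm1}\cdot 1\cdot t^{\mp1} = 1$, which is true. Relation (3b) has the form $y_b(c,x)y_b^{-1} = x_b(a,y)x_b^{-1}$. Since $(c,x)$ and $(a,y)$ map to $1$, both sides map to $1$. Thus every defining relation is respected, and the map is a well-defined homomorphism.

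Surjectivity is immediate, since each free generator $t_x$ is the image of $x_a$. The only point requiring care is the bookkeeping of type $2$ versus type $\infty$ generators in each relation, and checking that in every conjugation relation the type $\infty$ letters appear as $g\,h\,g^{-1}$, so that they cancel in the free group. This holds for all the listed relations.
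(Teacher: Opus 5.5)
Your proposal is correct and is essentially the paper's argument. The paper quotients $\GG_0(\Sigma,\Gamma)$ by the normal closure of the type~$2$ generators $(\omega,x)$ and observes that what remains is the free group on the $x_a$ with $x \notin E(\Sigma)$. That quotient map is exactly the homomorphism you define explicitly, except that you verify relations (1b), (2b), (2c) and (3b) one by one, whereas the paper's phrasing also records that the kernel is precisely that normal closure.
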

\begin{proof}
Mod out by the normal subgroup generated by all the elements $(\omega,x)$ for $\omega \not \in x$. The quotient is generated by the $x_a$, one generator for each $x \in E(\Gamma) - E(\Sigma)$, and all the relations vanish. The resulting group is indeed the free group $\pi_1(\Gamma)$.
\end{proof}
A concrete map $C(\cyc)_o \ra \Z$ is given in \Eref{cyccase}.

\begin{cor}\label{notCoxeter}
If $\Gamma$ is not a tree, then $C(\Gamma)_o$ is not a Coxeter group. (Because a Coxeter group has no nontrivial torsion-free quotient).
\end{cor}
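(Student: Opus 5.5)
The plan is to show that every Coxeter group $W$ has the property that each homomorphism from $W$ to a torsion-free group is trivial. Then I will combine this with \Cref{free} and \Pref{RMhere} to conclude.

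First, let $W$ be a Coxeter group with Coxeter generating set $S$, and let $f\co W \ra Q$ be a surjective homomorphism onto a torsion-free group $Q$. Each $s \in S$ satisfies $s^2=1$, so $f(s)^2 = 1$ in $Q$. Since $Q$ is torsion-free, this forces $f(s)=1$. The set $S$ generates $W$, so $f$ is trivial and hence $Q=1$. In other words, a Coxeter group has no nontrivial torsion-free quotient. This is exactly the parenthetical remark in the statement, and it is the only structural fact about Coxeter groups that the argument needs.

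Second, suppose $\Gamma$ is connected and not a tree, and choose a spanning subtree $\Sigma$. Then $r = \card{E(\Gamma)-E(\Sigma)} \geq 1$. By \Pref{RMhere}, or more precisely its specialization stated just before \Pref{thepres}, we have $C(\Gamma)_o \isom \GG_0(\Sigma,\Gamma)$. By \Cref{free} there is a surjection $\GG_0(\Sigma,\Gamma) \ra \pi_1(\Gamma)$, and $\pi_1(\Gamma)$ is a free group of rank $r \geq 1$. A nontrivial free group is torsion-free, so $C(\Gamma)_o$ has a nontrivial torsion-free quotient. By the first step, $C(\Gamma)_o$ is therefore not isomorphic to any Coxeter group.

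The only point that needs care is that the quotient in \Cref{free} really is free of rank $r$ and in particular nontrivial. In that proof, killing all the $(\omega,x)$ leaves generators $x_a$ together with the relations $x_ax_b=1$ and $x_a=1$ for $x\in E(\Sigma)$. The conjugation relations \eq{54.2b}, \eq{54.2c} and \eq{54.3b} all become trivial once the $(\omega,x)$ are killed. So one free generator survives for each non-tree edge, and the quotient is nontrivial as soon as $\Gamma$ is not a tree. I expect no further obstacle, since everything else follows from the earlier results.
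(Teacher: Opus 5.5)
Your proof is correct and is the paper's own argument: \Cref{free} gives a surjection of $C(\Gamma)_o \isom \GG_0(\Sigma,\Gamma)$ onto the nontrivial free group $\pi_1(\Gamma)$. Any homomorphism from a Coxeter group to a torsion-free group kills the involutive generators and is therefore trivial. Your extra check that relations \eq{54.2b}, \eq{54.2c} and \eq{54.3b} collapse once the $(\omega,x)$ are killed, leaving a free group of rank $r \geq 1$, spells out the step the paper states only briefly.
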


\begin{cor}
The subgroup generated by all the $x_a$ in $\GG_0(\Sigma,\Gamma)$ (one for each edge not in $\Sigma$) is free. In particular the order of each $x_a$ is infinite.
\end{cor}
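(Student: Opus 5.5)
The statement concerns the subgroup $F=\sg{x_a \suchthat x \in E(\Gamma)-E(\Sigma)}$ of $\GG_0(\Sigma,\Gamma)$, with one generator chosen per edge (the other endpoint gives the inverse, by \Pref{52}). I will show that $F$ is free on these generators by using the projection of \Cref{free}. Let $N$ be the normal subgroup generated by all the type-$2$ generators $(\omega,x)$ with $\omega\notin x$. As shown in the proof of \Cref{free}, the quotient $\GG_0(\Sigma,\Gamma)/N$ is generated by the images of the $x_a$, one for each $x\in E(\Gamma)-E(\Sigma)$, and it is free on them.

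I will first recheck that claim against \Pref{thepres}, since everything rests on it. Killing all $(\omega,x)$ affects the relations as follows:
\begin{itemize}
\item \eq{54.1} becomes $x_ax_b=1$, which only identifies $x_b$ with $x_a^{-1}$. It also sets $x_a=1$ for tree edges.
\item \eq{54.2a} and \eq{54.3a} become trivial.
\item \eq{54.2b} and \eq{54.2c} become $1=1$, because conjugating the trivial element by $x_a$ gives the trivial element.
\item \eq{54.3b} likewise becomes $1=1$.
\end{itemize}
So the quotient is the free group on the set $\set{x_a}$, with one representative per non-tree edge, and this set has rank $r=\card{E(\Gamma)-E(\Sigma)}$.

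Next I compose the inclusion $F\hookrightarrow \GG_0(\Sigma,\Gamma)$ with this projection. The generators of $F$ map bijectively onto a free basis of the free group $\GG_0(\Sigma,\Gamma)/N$, so the composite is surjective. Now use the universal property of the free group $\sg{t_x \suchthat x\in E(\Gamma)-E(\Sigma)}$: the homomorphism $t_x\mapsto x_a\in F$ is surjective, and composing it with the projection gives the isomorphism onto the quotient. Hence that homomorphism is also injective, and $F$ is free of rank $r$ on the chosen $x_a$.

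In particular each such $x_a$ is a basis element of a free group and so has infinite order. The same then holds for $x_b=x_a^{-1}$, and for the corresponding elements $\pi_{oa}x\pi_{ob}^{-1}$ of $C(\Gamma)_o$ under $\Psi_o$ (\Pref{RMhere}).

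The only point needing care is the check that no relation of \Pref{thepres} survives in the quotient, and in particular that the conjugation relations \eq{54.2b} and \eq{54.3b} do not produce relations among the $x_a$. This holds because each side of these relations is a conjugate of some $(\omega,x)$. Everything else is formal.
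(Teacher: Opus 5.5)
Your proof is correct and is essentially the argument the paper relies on. The paper gives no separate proof; the corollary is read off from the projection of \Cref{free}, under which the chosen $x_a$ map to a free basis of $\pi_1(\Gamma)$, so the subgroup they generate is free on them. Your relation-by-relation check and the universal-property step make that implicit argument explicit.
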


From the presentation of $\GG_0(\Sigma,\Gamma)$ we can describe the abelianization of $\GG_0(\Sigma,\Gamma)$ in terms of the fundamental group of $\Gamma$.
\begin{thm}\label{abt}
The abelianization of
$\GG_0(\Sigma,\Gamma)$, when $\card{V(\Gamma)}>2$, is
$$\ab{\GG_0(\Sigma,\Gamma)} \isom \Z_2 \,\times\, \ab{\pi_1(\Gamma)};$$
the first component is the image of all the $(\omega,x)$, and the second is generated by the images of the $x_a$.
\end{thm}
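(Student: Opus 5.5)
Abelianize the presentation of \Pref{thepres} directly, writing additively. In the abelianization the relations (2b),(2c),(3b) become identifications among generators of type~$2$: for disjoint $x=\set{a,b}$, $y=\set{c,d}$, relation (2b) gives $(b,y)=(a,y)$, and (2c) gives $(d,x)=(c,x)$. Relation (3b), for $x=\set{a,b}$, $y=\set{b,c}$, gives $(c,x)=(a,y)$. Relation (3a) gives $(\omega,x)=(\omega,y)$ whenever $x,y$ intersect and $\omega\notin x\cup y$. Relation (2a) becomes vacuous, and (1a) says each $(\omega,x)$ has order dividing~$2$. Among the $x_a$, relation (1b) gives $x_b=-x_a$, and (1c) kills $x_a$ for $x\in E(\Sigma)$. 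No relation mixes the $x_a$ with the $(\omega,x)$, since $x_a$ occurs only inside conjugations, which vanish. Hence the abelianization splits as a direct sum $A\oplus B$. Here $B$ is free abelian on one $x_a$ per edge outside $\Sigma$, and this is $\ab{\pi_1(\Gamma)}$, matching \Cref{free}. The summand $A$ is the elementary abelian $2$-group on the symbols $(\omega,x)$ with $\omega\notin x$, modulo the identifications above.

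The main step is to show $A\isom\Z_2$, i.e.\ that all the $(\omega,x)$ become equal in $A$ and that the common class is nonzero. For nontriviality I would use a homomorphism rather than the relations. The sign map $C(\Gamma)\ra\set{\pm1}$, sending every edge to $-1$, is well defined since all relations have even length. Compose it with $\Psi_o$ from \Rref{whatisPsi}. Each $(\omega,x)\mapsto\pi_{o\omega}x\pi_{o\omega}^{-1}$ is sent to $-1$, while $x_a\mapsto\pi_{oa}x\pi_{ob}^{-1}$ has odd length in the edges. To keep the splitting clean, I would instead define $\GG_0\ra\Z_2$ directly on generators: send every $(\omega,x)$ to $1$ and every $x_a$ to $0$. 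All relations of \Pref{thepres} are respected, because each relation has an even number of type-$2$ letters on each side, or equates two of them. So $A\neq0$.

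For the equalities, consider the graph $\mathcal{H}$ whose vertices are the pairs $(\omega,x)$ with $\omega\notin x$, joined when an identification above relates them; I must show $\mathcal{H}$ is connected. First, fix the edge $x$. Relation (2b) lets $\omega$ move along any edge $y$ disjoint from $x$. Relation (3b) moves $(c,x)$ to $(a,y)$, which switches the edge. Relation (3a) keeps $\omega$ fixed and switches between intersecting edges not containing $\omega$. Then argue as follows. Fix $\omega$. The edges avoiding $\omega$ are connected through (3a) moves whenever they lie in one component of $\Gamma-\omega$. Edges in different components of $\Gamma-\omega$ are reached via (3b): the pair $(\omega,x)$ with $x=\set{a,b}$ and $\omega$ adjacent to $b$ through $y=\set{b,\omega}$ relates to $(a,y)$. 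Thus one hops to a different base point, and from there on to other components. Combining these moves, and using connectivity of $\Gamma$, every pair reaches every other.

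The obstacle is the small cases: one must check that enough room exists. This is where $\card{V(\Gamma)}>2$ enters, since otherwise there are no type-$2$ generators at all and $A=0$. Triangle- and star-like graphs, for example, should be verified directly; for $\Y$ with $o$ the center, the answer $\Z_2$ is consistent with \Eref{DY5}. I expect the connectivity bookkeeping to be the fiddliest part, and I would organize it by induction on a path from one pair to another in $\Gamma$.
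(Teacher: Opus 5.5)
Your outline follows the paper: abelianize \Pref{thepres}, note that the $x_a$ survive only in (1b) and (1c), split off $\Z^r\isom\ab{\pi_1(\Gamma)}$, and collapse the $(\omega,x)$. Your homomorphism $\GG_0(\Sigma,\Gamma)\ra\Z_2$, sending every $(\omega,x)$ to $1$ and every $x_a$ to $0$, is well defined and adds something. The paper's proof only shows that the $(\omega,x)$ collapse to a single element of order dividing~$2$, and leaves its nontriviality implicit. Two slips in that part are harmless. First, $\Psi_o(x_a)$ need not have odd length; its parity is that of the fundamental cycle of $x$. This does not matter, because the sign character composed with $\Psi_o$ already factors through the abelianization and sends every $(\omega,x)$ to $-1$. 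Second, (3a) has three type-$2$ letters on each side, not an even number, which is fine because the two counts agree.

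The gap is the step you defer, which is the entire content of the paper's proof: you assert that $\mathcal{H}$ is connected but do not show it. Your (3b) hop sends $(\omega,x)$ to $(a,\set{b,\omega})$, whose base point is $a$. You never explain how to return to base point $\omega$ with an edge in another component of $\Gamma-\omega$. Nor do you show how, with $x$ fixed, the base point gets past a vertex of $x$, where (2b) is unavailable. The paper settles this with two lemmas.

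The first lemma is $(\omega,x)=(\omega,y)$ whenever $\omega\notin x\cup y$. Slide along a simple path from $x$ to $y$ by (3a). If $\omega$ lies on that path, reduce to a path with consecutive vertices $a,b,\omega,c,d$, where $x=\set{a,b}$, $y=\set{c,d}$, and
\[
(\omega,x)\stackrel{\mathrm{(2b)}}{=}(c,x)\stackrel{\mathrm{(3a)}}{=}(c,\set{b,\omega})\stackrel{\mathrm{(3b)}}{=}(b,\set{\omega,c})\stackrel{\mathrm{(3a)}}{=}(b,y)\stackrel{\mathrm{(2b)}}{=}(\omega,y).
\]
The trick is to move the base point off $\omega$ first, along $\set{\omega,c}$, which is disjoint from $x$.

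The second lemma is $(\omega,x)=(\omega',x)$ for $\omega,\omega'\notin x=\set{a,b}$. Move along a shortest $\omega$--$\omega'$ path by (2b). If the path meets $x$, this reduces it either to the path $\omega,a,b,\omega'$ or to a claw $\omega,a,\omega'$ centred at $a$. The first case is handled by
\[
(\omega,x)\stackrel{\mathrm{(3a)}}{=}(\omega,\set{b,\omega'})\stackrel{\mathrm{(2b)}}{=}(a,\set{b,\omega'})\stackrel{\mathrm{(3b)}}{=}(\omega',x),
\]
and the claw by
\[
(\omega,x)\stackrel{\mathrm{(3a)}}{=}(\omega,\set{a,\omega'})\stackrel{\mathrm{(3b)}}{=}(\omega',\set{a,\omega})\stackrel{\mathrm{(3a)}}{=}(\omega',x).
\]

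Together these identify any two pairs that share a base point or an edge, and that links all pairs once $\card{V(\Gamma)}\geq 4$. Your instinct about small cases is sound: for $\Gamma=\path[3]$ or $\triangle$ one must use (3b) directly, a case the paper passes over.
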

\begin{proof}
Reading through the presentation of $\GG_0(\Sigma,\Gamma)$ given in \Pref{thepres}, the abelianization is the abelian group generated by the $(\omega,x)$ and $x_a$ as above, subject to the relations:
\begin{enumerate}
\item\label{AB0}
$x_ax_b = 1$ \quad for every edge $x = \set{a,b}$ not in $\Sigma$;
\item\label{AB1} $x_a = 1$ \quad for every edge $x = \set{a,b}$ in $\Sigma$;
\item $(\omega,x)^2 = 1$ \quad for every edge~$\omega$ not on $x$;
\item\label{AB2} $(b,y) = (a,y)$ \quad for any disjoint edges $x = \set{a,b}$ and $y = \set{c,d}$ in $\Gamma$;
\item For every pair $x = \set{a,b}$ and $y = \set{b,c}$ of intersecting edges in~$\Gamma$:
\begin{enumerate}
\item\label{AB3.1} $(\omega,x) = (\omega,y)$ \quad for every $\omega \neq a,b,c$;
\item\label{AB3.2} $(c,x) = (a,y)$;
\end{enumerate}
\end{enumerate}

Since the only relations involving the generators $x_a$ are \eq{AB0} and \eq{AB1}, we have that
$$\ab{\GG_0(\Sigma,\Gamma)} \isom \sg{(\omega,x) \suchthat x \in E(\Gamma), \omega \not \in x} \times \sg{x_a \suchthat x \in E(\Gamma), a \in x},$$
and that the second component is generated by the $x_a$ for $x \not \in E(\Sigma)$, subject only to the relation $x_{xa} = x_a^{-1}$. This component is therefore isomorphic to $$\Z^r \isom \ab{\pi_1(\Gamma)}.$$

It remains to show that all the generators $(\omega,x)$ coincide in the abelianization (noting that such generators exist because of the assumption on the number of vertices in $\Gamma$). To prove this, we prove two subclaims. First we show that
$(\omega,x) = (\omega,y)$ for any two edges $x,y$, and any vertex~$\omega$ not on $x,y$. Let $x = u_1,\dots,u_t = y$ be a simple path starting at $x$ and ending at $y$. If~$\omega$ is not a vertex on the path, we are done by repeated application of Relation \eq{AB3.1}. If~$\omega$ is on the path, then $t \geq 4$, and we may still apply the same relation and shorten the path, unless $t = 4$, where (labelling the vertices) the path is
$$\xymatrix{a \ar@{-}[r]^{x} & b \ar@{-}[r]^{u_2} & \omega \ar@{-}[r]^{u_3} &c \ar@{-}[r]^{y} & d}.$$
Applying the relations \eq{AB2}, \eq{AB3.1} and \eq{AB3.2} as indicated, we get
$$(\omega,x) \stackrel{\eq{AB2}}{\equiv} (c,x) \stackrel{\eq{AB3.1}}{\equiv} (c,u_2) \stackrel{\eq{AB3.2}}{\equiv} (b,u_3) \stackrel{\eq{AB3.1}}{\equiv} (b,y) \stackrel{\eq{AB2}}{\equiv} (\omega,y).$$

Secondly, we show that for any edge $x$, $(\omega,x) = (\omega',x)$ for any two vertices $\omega,\omega'$ not on $x$. Let $u_1,u_2,\dots,u_n$ be a shortest path from~$\omega$ to $\omega'$. If the path does not touch $x$, repeatedly apply Relation~\eq{AB2}. If $x$ is one of the edges on the path, then by the same relation we may assume the path is
$$\xymatrix{\omega \ar@{-}[r]^{u_1} & a \ar@{-}[r]^{x} & b \ar@{-}[r]^{u_3} & \omega'},$$
where $(\omega,x) \stackrel{\eq{AB3.1}}{\equiv} (\omega,u_3) \stackrel{\eq{AB2}}{\equiv} (a,u_3) \stackrel{\eq{AB3.2}}{\equiv} (\omega',x)$. Finally if the path touches $x$ in one vertex, again we may shorten the path by Relation~\eq{AB2}, arriving at the claw
$$\xymatrix@C=18pt@R=10pt{
 {} & \omega \ar@{-}[r]^{u_1} & a \ar@{-}[d]^x & \omega' \ar@{-}[l]_{u_2} \\
 {} & {} & b & {}},$$
where $(\omega,x) \stackrel{\eq{AB3.1}}{\equiv} (\omega,u_2) \stackrel{\eq{AB3.2}}{\equiv} (\omega',u_1) \stackrel{\eq{AB3.1}}{\equiv} (\omega',x)$. With this, all the relations $(\omega,x)$ collapse to a single element of order $2$, generating the first component of the abelianization. 
\end{proof}

\begin{rem}
The proof of \Tref{abt} also shows that in the group $\GG_0(\Sigma,\Gamma)$, all the $(\omega,x)$ are conjugate.
\end{rem}

In conclusion of this section, we pose the following natural question. The key property of $C(\Gamma)$ is that it acts on the vertex set $V(\Gamma)$; letting $n = \card{V(\Gamma)}$ be the number of vertices, we get a short exact sequence
\begin{equation}\label{basics}
\xymatrix{1 \ar@{->}[r] & \bigcap_{o \in V(\Gamma)} C(\Gamma)_o \ar@{->}[r] & C(\Gamma) \ar@{->}[r] & S_n\ar@{->}[r] & 1,}
\end{equation}
where each $C(\Gamma)_o$ is the stabilizer of a vertex $o$, and the intersection is taken over all the vertices.

\begin{ques}\label{kernel}
Let $\Gamma$ be a connected simple graph with $n = \card{V(\Gamma)}$. Find a presentation for the kernel of $C(\Gamma) \ra S_n$.
\end{ques}
A concise description of the kernel will be quite helpful for the computation of fundamental groups of Galois covers, see \cite{Merav8}. Note that the kernel of $\CY(\Gamma) \ra S_n$ was fully described in \cite{Cox}.

\section{The stabilizer $C(\Sigma)_o$ for a tree}\label{sec:6}

We continue the previous section, assuming here that $\Gamma = \Sigma$ is a tree.
After showing that the stabilizer in this case is a Coxeter group in \Ssref{5*2}, our goal here is to identify the corresponding Coxeter diagram as~$\covg{\Sigma}$.

Again let $\GG_0(\Sigma) = \GG_0(\Sigma,\Sigma)$ denote the abstract group isomorphic
to the stabilizer $C(\Sigma)_o$ by the isomorphism $\Psi_o \co \GG_0(\Sigma) \ra C(\Sigma)_o$ defined in \eq{Psidef}, where $o$ is a fixed vertex. Simplifying the presentation from \Pref{thepres}, we obtain:
\begin{cor}\label{relstree}
Suppose $\Gamma = \Sigma$ is a tree. The group $\GG_0(\Sigma)$ is
generated by the pairs $(\omega,x)$ for $x \in E(\Sigma)$ and~$\omega$ a vertex not on $x$; with the following relations:
\begin{enumerate}
\item For every edge $x = \set{a,b}$ of $\Sigma$:
\begin{enumerate}
\item\label{T1a} $(\omega,x)^2 = 1$ for every $\omega \neq a,b$;
\end{enumerate}
\item For every pair $x = \set{a,b}$ and $y = \set{c,d}$ of disjoint edges in $\Sigma$:
\begin{enumerate}
\item\label{T2a} $[(\omega,x),(\omega,y)] = 1$ for every $\omega \neq a,b,c,d$;
\item\label{T2b} $(b,y) = (a,y)$;
\item\label{T2c} $(d,x) = (c,x)$;
\end{enumerate}
\item For every pair $x = \set{a,b}$ and $y = \set{b,c}$ of (distinct) intersecting edges in~$\Gamma$:
\begin{enumerate}
\item\label{T3a} $(\omega,x)(\omega,y)(\omega,x) = (\omega,y)(\omega,x)(\omega,y)$ for every $\omega \neq a,b,c$;
\item\label{T3b} $(a,y) = (c,x)$;
\end{enumerate}
\end{enumerate}
\end{cor}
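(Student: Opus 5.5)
This is a direct specialization of \Pref{thepres} to the case $\Gamma = \Sigma$, and I would prove it by substituting that case into each family of relations and simplifying. Since every edge of $\Gamma$ now lies in $E(\Sigma)$, relation \eq{54.1c} forces $x_a = x_b = 1$ for \emph{every} edge $x=\set{a,b}$. So all generators of type $\infty$ disappear from the presentation. Tietze transformations then remove them together with the relations $x_ax_b=1$, which become trivial. What is left is generated by the type-$2$ generators $(\omega,x)$ with $\omega \not\in x$, and it remains to check what each remaining relation of \Pref{thepres} becomes.

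Going through the families in order:
\begin{enumerate}
\item Relation (1a) of \Pref{thepres} is kept verbatim as \eq{T1a}.
\item For disjoint edges $x=\set{a,b}$, $y=\set{c,d}$, relation \eq{54.2a} is kept as \eq{T2a}.
\item In \eq{54.2b}, $x_a(b,y)x_a^{-1}=(a,y)$, we substitute $x_a=1$ and obtain $(b,y)=(a,y)$, which is \eq{T2b}. Symmetrically, \eq{54.2c} becomes \eq{T2c}.
\item For intersecting edges $x=\set{a,b}$, $y=\set{b,c}$, relation \eq{54.3a} is kept as \eq{T3a}.
\item In \eq{54.3b}, $y_b(c,x)y_b^{-1}=x_b(a,y)x_b^{-1}$, we put $x_b=y_b=1$ and obtain $(c,x)=(a,y)$, which is \eq{T3b}.
\end{enumerate}

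The result is exactly the list in the corollary. Together with $\GG_0(\Sigma)\isom C(\Sigma)_o$ via $\Psi_o$ (\Pref{RMhere}), this gives the presentation as stated.

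The only point needing care is the bookkeeping of the deletions. Every occurrence of an $x_a$ must become trivial, and no relation may be lost. The first is immediate, because $\Gamma=\Sigma$ means no edge lies outside the spanning tree. For the second, we must verify that the rotated interpretations of $xyxyxy=1$ at the vertices $a$, $b$, $c$ all collapse to the single relation \eq{T3b}; this already follows from the proof of \Pref{thepres}, where they were shown to be rotations of \eq{54.3b}. I expect no genuine obstacle here; the argument is routine simplification.
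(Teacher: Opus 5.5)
Your proposal is correct and matches the paper's route: the paper obtains \Cref{relstree} by specializing \Pref{thepres} to $\Gamma=\Sigma$. There relation \eq{54.1c} kills every $x_a$, so \eq{54.2b}, \eq{54.2c} and \eq{54.3b} collapse to \eq{T2b}, \eq{T2c} and \eq{T3b}, and the remaining relations carry over unchanged. (Your closing appeal to $\Psi_o$ is not needed, since the corollary is a statement about the abstract group $\GG_0(\Sigma)$ itself.)
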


\begin{figure}
$$\xymatrix{
{} & \GG_0(\Sigma) \ar@{.>}[dl]_(0.45){\Theta} \ar@{->}[dr]^{\Psi_o} & {} \\
\Cover \ar@{->}[rr]^{\Phi_o} & & C(\Sigma)_o
}$$
\caption{The stabilizer in three disguises}\label{thegroups2}
\end{figure}

In \Pref{map} we constructed maps $\Phi_o \co \Cover \ra C(\Sigma)$, whose images are contained in the point stabilizers $C(\Sigma)_o$. 
Our goal is to prove that the three groups in \Fref{thegroups2}---one given by presentation, one a Coxeter group, and one the point stabilizer---are isomorphic. In particular, this will give $C(\Sigma)_o$ a concrete presentation as the Coxeter group $\Cover$, which was defined in \Dref{covgdef}. The next step is to define a homomorphism $\Theta \co \GG_0(\Sigma) \ra \Cover$. Let $(\omega, x)$ be a generator. Thus $x$ is an edge and~$\omega$ a vertex not on $x$. Write $x = \set{a,b}$ where~$\omega$ is closer to $b$ than to $a$. Let $c$ be the neighbor of $b$ on the path from~$b$ to~$\omega$, as in~\eq{patho}. We define $\Theta$ to send $(\omega,x)$ to the edge $\set{x,x'} = [abc]$ of $\Line{\Sigma}$.
\begin{equation}\label{patho}
\xymatrix{
a \ar@{-}[r]^{x} & b \ar@{-}[r]^{x'} & c \ar@{. }[rr] &  & \omega}
\end{equation}

\begin{prop}\label{Thetadef}
The map $$\Theta \co \GG_0(\Sigma) \ra \Cover$$
defined by sending each generator $(\omega,x)$ to $[abc] \in E(\Line{\Sigma})$ as above is a well-defined homomorphism.
\end{prop}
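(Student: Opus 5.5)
We verify that the assignment respects each defining relation of $\GG_0(\Sigma)$ listed in \Cref{relstree}. Throughout, $\Theta(\omega,x)$ depends only on $x=\set{a,b}$ and on the first edge $x'=\set{b,c}$ of the path from $x$ towards $\omega$. It is therefore constant as long as $\omega$ stays in the component of $\Sigma - \set{b}$ containing $c$. This locality observation handles the ``equality'' relations. The Coxeter relations reduce to identifying which edges of $\Line{\Sigma}$ the images form, and checking that the relevant pair is adjacent, or disjoint and \emph{not} evenly connected, in $\covg{\Sigma}$.

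Relation \eq{T1a} is immediate, since every generator of $\Cover$ is an involution. For \eq{T2b}, take disjoint $x=\set{a,b}$, $y=\set{c,d}$. The vertices $a,b$ are adjacent and both off $y$. Since $\Sigma$ is a tree, the paths from $a$ and from $b$ to $y$ enter $y$ at the same endpoint, via the same last edge. So $\Theta(a,y)=\Theta(b,y)$, and \eq{T2c} is symmetric. For \eq{T3b}, with $x=\set{a,b}$, $y=\set{b,c}$, both $\Theta(a,y)$ and $\Theta(c,x)$ equal $[abc]$ by definition. Thus the equalities are trivial.

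For \eq{T3a}, fix $\omega\notin\set{a,b,c}$. Let $\beta$ be the point where the path from $\omega$ meets the path $a$---$b$---$c$. If $\beta=b$, the path leaves $b$ through a third edge $z=\set{b,e}$. The images are $[abe]$ and $[cbe]$, which share the edge $z$, so they are adjacent in $\Line{\Line{\Sigma}}$ and satisfy the braid relation. If instead $\beta=c$ (or, symmetrically, $\beta=a$), the images are $[abc]$ and $[bce]$, where $e$ is the next vertex towards $\omega$. These share the edge $y$, so again the braid relation holds.

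For \eq{T2a}, fix disjoint $x,y$ and $\omega$ off both. We must show the images commute in $\Cover$. This is the main case, because commutation must hold for disjoint edges of $\Line{\Sigma}$ that are not evenly connected, and fail only for evenly connected ones. Write $\Theta(\omega,x)=[abe]$ with $e$ towards $\omega$, and $\Theta(\omega,y)=[dcf]$ with $f$ towards $\omega$. If these two edges of $\Line{\Sigma}$ share a vertex of $\Line{\Sigma}$, that shared edge would be $\set{b,e}=\set{c,f}$, since $x\neq y$. Then the two images would be adjacent rather than commuting. Here we use $\omega\notin x\cup y$ and that $\omega$ lies beyond both $b$ and $c$. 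If $b=f$ and $c=e$, then $\omega$ would have to be simultaneously on the $c$-side of $b$ and on the $b$-side of $c$, which is impossible. So the images are disjoint in $\Line{\Sigma}$. We then show they are not evenly connected by the criterion of \Pref{graph1} or \Fref{TLT}. The middle points are $b$ and $c$, and the path between them passes through a leaf. Indeed, consider where the rays from $x$ and from $y$ towards $\omega$ meet. The path from $b$ to $c$ either goes through $e$ or through $f$, or the configuration has the form (b)/(c) of \Fref{TLT} because one of the edges points towards the other. I would check this by splitting according to whether $y$ lies in the component of $\Sigma-\set{b}$ containing $e$, and vice versa. If it does, the path from $b$ to $c$ starts with $\set{b,e}$ and so passes through the leaf $e$. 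If neither holds, then $\omega$ lies on the segment between them, and again the path passes through $e$ (and through $f$). This is the step I expect to require the most care. Once it is settled, the pair is not joined by an $\infty$-edge, so the images commute in $\Cover$, and $\Theta$ is well defined.
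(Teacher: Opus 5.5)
Your proposal is correct in substance and follows the paper's proof: a relation-by-relation check of the presentation in Corollary~\ref{relstree}. There, the equality relations hold because both sides have the same image, and the braid relations hold because the two images share an edge of $\Line{\Sigma}$. The commutation relation $[(\omega,x),(\omega,y)]=1$ is the only real case, and it is settled by showing that the images are disjoint (the only possible coincidence $\{b,e\}=\{c,f\}$ is impossible) and are not evenly connected.

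Two small corrections in that last step. First, your third case (``neither holds, so $\omega$ lies between them'') is empty: if $\omega$ lay on the $b$--$c$ path, both of your first two alternatives would hold. The clean argument is that the paths from $b$ and from $c$ to $\omega$ merge at some vertex $m$, and since $b\neq c$, either $m\neq b$ or $m\neq c$. If $m\neq b$, the $b$--$c$ path starts with $\{b,e\}$; if $m\neq c$, it ends with $\{f,c\}$. Either way it passes through a leaf. Second, ruling out the coincidences $x=\{c,f\}$ and $\{b,e\}=y$ uses $x\cap y=\emptyset$, not just $x\neq y$.
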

\begin{proof}
We apply $\Theta$ to the relations listed in \Cref{relstree}. The relations in \ref{relstree}.\eq{T1a} become $[abc]^2 = 1$, which clearly holds in $\Cover$. 

To verify \ref{relstree}.\eq{T2a}, let $x=\set{a,b}$ and $y = \set{c,d}$ be disjoint edges in $\Sigma$, and let~$\omega$ be a vertex not on these edges. Let $x',y'$ be the edges for which~$\Theta$ maps $(\omega,x)$ and $(\omega,y)$ to $\set{x,x'} = [abb']$ and $\set{y,y'} = [cdd']$, respectively (thus $\dist{b'}{\omega} < \dist{b}{\omega} < \dist{a}{\omega}$ and
$\dist{d'}{\omega} < \dist{d}{\omega} < \dist{c}{\omega}$). To show that $\set{x,x'}$ and $\set{y,y'}$ are disjoint, it suffices to verify that $x' \neq y'$; but otherwise $b' = d$ and $d' = b$ (since $b \neq d$), which is impossible because $\dist{b'}{\omega} < \dist{b}{\omega}$ and $\dist{d'}{\omega} < \dist{d}{\omega}$. Furthermore, since~$\omega$ is closer to $b',d'$ than to $a,d$, respectively, the pairs $\set{x,x'}$ and $\set{y,y'}$ are not evenly connected. This proves that $\set{x,x'}$ and $\set{y,y'}$ commute by the definition of $\Cover$. 

Consider the relation \ref{relstree}.\eq{T2b}. Assuming $a$ is closer to $\set{c,d}$ than $b$ is, the path from $b$ to $\set{c,d}$ passes through $a$, so they approach $\set{c,d}$ together. This shows that~$\Theta$ maps $(a,y)$ and $(b,y)$ to the same edge of $E(\Line{\Sigma})$, so the relation holds trivially in the image. Similarly for \ref{relstree}.\eq{T3b}, when $x = \set{a,b}$ and $y = \set{b,c}$, both $(a,y)$ and $(c,x)$ map to $[abc]$.

It remains to verify Relation \ref{relstree}.\eq{T3a}. Let $\omega \not \in \set{a,b,c}$, where $x = \set{a,b}$ and $y = \set{b,c}$ are edges. If~$\omega$ is closer to $a$ than to~$b$ and $c$, then $(\omega,x) \mapsto \set{x',x}$ and $(\omega,y) \mapsto \set{x,y}$, where $x'$ is the edge leading from $a$ towards~$\omega$. The product of two edges of $\Line{\Sigma}$ is of order $3$ by the definition of $\Cover$. 
The case where $\omega$ is closer to $c$ than to $b$ and $a$ is symmetric.
Finally if~$\omega$ is closer to~$b$ than to $a$ or $c$, then $(\omega,x) \mapsto \set{x,z}$ and $(\omega,y) \mapsto \set{y,z}$ where $z$ is an edge from~$b$ towards~$\omega$, and the same holds.
\end{proof}

\begin{prop}
The triangle in \Fref{thegroups2} commutes.
\end{prop}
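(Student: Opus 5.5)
To show the triangle in \Fref{thegroups2} commutes, i.e.\ $\Phi_o\circ\Theta = \Psi_o$ as maps $\GG_0(\Sigma)\ra C(\Sigma)$, it suffices to check equality on the generators $(\omega,x)$ of $\GG_0(\Sigma)$ listed in \Cref{relstree}. This works because all three maps are homomorphisms: $\Theta$ by \Pref{Thetadef}, $\Phi_o$ by \Pref{map}, and $\Psi_o$ by \Pref{RMhere}.

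Fix a generator $(\omega,x)$, with $x=\set{a,b}$, where $\omega$ is closer to $b$ than to $a$, and $c$ is the neighbor of $b$ on the path towards $\omega$, as in \eq{patho}. By \eq{Psidefhere1}, $\Psi_o(\omega,x)=\pi_{o\omega}\,x\,\pi_{o\omega}^{-1}$. On the other hand, $\Theta(\omega,x)=[abc]$. Reading $[abc]$ in the orientation $[cba]$, \eq{Phidef} gives $\Phi_o([cba])=\pi_{oc}\pi_{ba}\pi_{oc}^{-1}=\pi_{oc}\,x\,\pi_{oc}^{-1}$, since $\pi_{ba}=x$. By \Pref{abc} this agrees with the other reading $\pi_{oa}\pi_{bc}\pi_{oa}^{-1}$, so the orientation is immaterial and $\Phi_o$ is indeed well defined on the unordered symbol $[abc]$.

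It therefore remains to show
$$\pi_{o\omega}\,x\,\pi_{o\omega}^{-1}=\pi_{oc}\,x\,\pi_{oc}^{-1}.$$
By \Pref{pitrans}, $\pi_{o\omega}=\pi_{oc}\pi_{c\omega}$, so the claim reduces to $\pi_{c\omega}\,x\,\pi_{c\omega}^{-1}=x$. The path from $c$ to $\omega$ in the tree moves away from $b$: it starts at $c$ and never returns to $b$, since $c$ lies on the geodesic from $b$ to $\omega$. Hence no edge of this path contains $a$ or $b$, so each such edge is disjoint from $x$ and commutes with it in $C(\Sigma)$ by relation~(2). This gives the claim. When $c=\omega$, the path is empty and the identity is trivial.

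The only delicate point is the bookkeeping of which endpoint of $[abc]$ plays the role of ``$a$'' in \eq{Phidef}. \Pref{abc} disposes of this, so we expect no real obstacle.
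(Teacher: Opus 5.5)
Your proof is correct and follows the paper's argument: check equality on the generators $(\omega,x)$, rewrite $\Phi_o([abc])$ as $\pi_{oc}\,x\,\pi_{oc}^{-1}$, and then use $\pi_{o\omega}=\pi_{oc}\pi_{c\omega}$ (\Pref{pitrans}) together with the fact that the path from $c$ to $\omega$ avoids $a$ and $b$. Your explicit appeal to \Pref{abc} to justify reading $[abc]$ as $[cba]$ simply spells out a step the paper uses implicitly.
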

\begin{proof}
We need to prove that $\Phi_o \circ \Theta = \Psi_o$. Let $(\omega,x)$ be a generator of $\GG_0$. Suppose $\Theta \co (\omega,x) \mapsto [abc]$, where $x = \set{a,b}$. As defined in \eq{Phidef}, $\Phi_o([abc]) = \pi_{oc}\pi_{ab}\pi_{oc}^{-1}$. At the same time, $\Psi_o$ maps $(\omega,x)$ to $\s_\omega x \sigma_{x\omega}^{-1} = \pi_{o \omega} x \pi_{o \omega}^{-1} = \pi_{oc}\pi_{c \omega} x \pi_{c \omega}^{-1} \pi_{o c}^{-1} = \pi_{oc}x  \pi_{o c}^{-1} = \pi_{oc} \pi_{ab} \pi_{oc}^{-1}$ by \Pref{pitrans} and the fact that the path from~$\omega$ to $c$ does not touch $x = \set{a,b}$ by the very choice of $c$, as in \eq{patho}.
\end{proof}

\begin{cor}
$\Theta$ and $\Phi_o$ are isomorphisms.
\end{cor}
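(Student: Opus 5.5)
We already have $\Phi_o\circ\Theta=\Psi_o$, and $\Psi_o\co\GG_0(\Sigma)\ra C(\Sigma)_o$ is an isomorphism by \Pref{RMhere}. The plan is therefore to prove that $\Theta$ is surjective. Once we know that, we argue as follows. Since $\Psi_o$ is injective, $\Theta$ is injective, so $\Theta$ is an isomorphism. Then $\Phi_o=\Psi_o\circ\Theta^{-1}$ is the composition of isomorphisms onto $C(\Sigma)_o$. This uses \Rref{map0}, which places the image of $\Phi_o$ inside $C(\Sigma)_o$.

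For surjectivity it suffices to hit each Coxeter generator $[abc]$ of $\Cover$, where $x=\set{a,b}$ and $x'=\set{b,c}$ are adjacent edges of $\Sigma$. Take $\omega=c$. Then $\omega\notin x$, and $\omega$ is closer to $b$ than to $a$. The neighbour of $b$ on the path from $b$ to $\omega$ is $c$ itself. By the definition of $\Theta$, the generator $(c,x)$ is sent to $[abc]$. Every generator of $\Cover$ is therefore in the image, so $\Theta$ is onto.

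There is no real obstacle, because all the work was done in \Pref{Thetadef} (well-definedness) and in the commutativity of the triangle. The only point to check carefully is the degenerate case of the definition in \eq{patho}, namely when $\omega$ is adjacent to $b$ and so $c=\omega$. This is allowed: the dotted path in \eq{patho} is then empty, and the recipe still outputs $[abc]$. It follows that $C(\Sigma)_o\isom\Cover$ via $\Phi_o$.
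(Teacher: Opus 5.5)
Your proposal is correct and is essentially the paper's proof. Injectivity of $\Theta$ comes from $\Phi_o\circ\Theta=\Psi_o$, and surjectivity holds because each generator $[abc]$ is the image of $(c,\set{a,b})$ (the paper also notes that $(a,\set{b,c})$ works). Then $\Phi_o=\Psi_o\circ\Theta^{-1}$ is an isomorphism.
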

\begin{proof}
Since $\Psi_o = \Phi_o \circ \Theta$ is an isomorphism, we conclude that $\Theta$ is injective. But also notice that $\Theta$ is surjective because every edge $[abc] \in E(\Line{\Sigma})$ is covered by $(a,\set{bc})$ and $(c,\set{a,b})$. This proves the claim for $\Theta$, and then for $\Phi_o = \Psi_o \circ \Theta^{-1}$ as well.
\end{proof}

This finally gives:
\begin{thm}\label{main-1}
$C(\Sigma)_o$ is isomorphic to the Coxeter group $\Cover$.
\end{thm}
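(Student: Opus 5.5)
The plan is to assemble \Tref{main-1} from the three maps of \Fref{thegroups2}, all of which are already available.

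\emph{Step 1: the abstract group is the stabilizer.} By Reidemeister--Schreier (\Pref{RMhere}), applied to $G = C(\Sigma)$ acting on $V(\Sigma)$ with the Schreier set $\s_a = \pi_{oa}$, the map $\Psi_o \co \GG_0(\Sigma) \ra C(\Sigma)$ is injective with image exactly $C(\Sigma)_o$. The presentation of $\GG_0(\Sigma)$ is the one in \Cref{relstree}, so $\Psi_o \co \GG_0(\Sigma) \ra C(\Sigma)_o$ is an isomorphism.

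\emph{Step 2: the factorization through $\Cover$.} By \Pref{map}, $\Phi_o \co \Cover \ra C(\Sigma)$ is a well-defined homomorphism. By \Rref{map0}, its image lies in $C(\Sigma)_o$. By \Pref{Thetadef}, $\Theta \co \GG_0(\Sigma) \ra \Cover$ is a well-defined homomorphism, and the preceding proposition gives $\Phi_o \circ \Theta = \Psi_o$. Since $\Psi_o$ is injective, $\Theta$ is injective.

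\emph{Step 3: surjectivity of $\Theta$.} For each generator $[abc]$ of $\Cover$, the generator $(c,\set{a,b})$ of $\GG_0(\Sigma)$ maps to it. Indeed, $c$ is closer to $b$ than to $a$, and the neighbour of $b$ on the path from $b$ to $c$ is $c$ itself. So $\Theta$ is an isomorphism.

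\emph{Conclusion.} Since $\Theta$ and $\Psi_o$ are isomorphisms, so is $\Phi_o = \Psi_o \circ \Theta^{-1}$. Hence $\Phi_o \co \Cover \ra C(\Sigma)_o$ is an isomorphism, which proves \Tref{main-1}.

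The main obstacle would be the well-definedness of $\Theta$, in particular checking that the commutation relations \ref{relstree}.\eq{T2a} land on non-evenly-connected pairs. This, however, is already done in \Pref{Thetadef}, so the remaining argument is formal. Matching $\Phi_o\circ\Theta$ with $\Psi_o$ on generators relies on \Pref{pitrans} and on the fact that the path from $\omega$ to $c$ avoids $x$.
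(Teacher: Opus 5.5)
Your proposal is correct and follows the paper's own argument essentially verbatim. As in the paper, $\Psi_o$ is an isomorphism onto $C(\Sigma)_o$ by Reidemeister--Schreier, and the commuting triangle $\Phi_o\circ\Theta=\Psi_o$ makes $\Theta$ injective. Surjectivity of $\Theta$ follows from $(c,\set{a,b})\mapsto[abc]$, so $\Phi_o=\Psi_o\circ\Theta^{-1}$ is an isomorphism $\Cover\to C(\Sigma)_o$.
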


Combining with \Cref{notCoxeter}, we proved:
\begin{cor}
For a graph $\Gamma$, the stabilizer $C(\Gamma)_o$ is a Coxeter group if and only if $\Gamma$ is a tree.
\end{cor}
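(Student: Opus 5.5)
The statement has two directions, and both are essentially assembled from results already established; my plan is to treat them separately.

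\emph{Trees.} Suppose $\Gamma=\Sigma$ is a tree. Then \Tref{main-1} gives $C(\Sigma)_o\isom\Cover$, and $\Cover$ is by construction the Coxeter group of the labelled diagram $\covg{\Sigma}$ (\Dref{covgdef}). So $C(\Sigma)_o$ is a Coxeter group. Independently, \Cref{here77} already yields this via Deodhar--Dyer, since the stabilizer is generated by edges, bridges and pitchforks, all of which are reflections.

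\emph{Non-trees.} Suppose $\Gamma$ is not a tree. Choose a spanning subtree $\Sigma$; then $r=\card{E(\Gamma)-E(\Sigma)}\geq 1$. By Reidemeister--Schreier (\Pref{RMhere}) we have $C(\Gamma)_o\isom\GG_0(\Sigma,\Gamma)$, and by \Cref{free} there is a surjection $\GG_0(\Sigma,\Gamma)\ra\pi_1(\Gamma)$, which is free of rank $r\geq1$. Composing with the abelianization onto $\Z^r$ and then projecting to one coordinate gives a surjection $C(\Gamma)_o\ra\Z$. On the other hand, a Coxeter group is generated by involutions, and any homomorphism to a torsion-free group sends each involution to an element $g$ with $g^2=1$, hence $g=1$. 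So every homomorphism from a Coxeter group to a torsion-free group is trivial, and the surjection onto $\Z$ is a contradiction. This is the content of \Cref{notCoxeter}.

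\emph{Checking the second direction.} The only point needing care is that the contradiction applies to \emph{every} Coxeter presentation, not merely one with respect to the natural generators. The involution argument handles this, since it uses nothing about the particular generating set beyond its consisting of involutions.

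I also want to confirm that the map in \Cref{free} is genuinely onto a nontrivial free group. Killing the $(\omega,x)$ leaves exactly the generators $x_a$ for $x\notin E(\Sigma)$, with the relation $x_b=x_a^{-1}$. Every relation in \Pref{thepres} involving the $x_a$ becomes trivial after killing the $(\omega,x)$: for instance, \eq{54.2b} becomes $x_ax_a^{-1}=1$, and \eq{54.3b} becomes $1=1$ after conjugation. So the quotient is free of rank $r\geq 1$, and in particular nontrivial.

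Combining the two directions gives the equivalence: $C(\Gamma)_o$ is a Coxeter group if and only if $\Gamma$ is a tree.
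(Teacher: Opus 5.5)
Your proposal is correct and follows the paper's argument. The tree direction is \Tref{main-1}, which identifies $C(\Sigma)_o$ with $\Cover$. The non-tree direction is \Cref{notCoxeter}: the surjection of \Cref{free} onto the nontrivial free group $\pi_1(\Gamma)$, combined with the fact that a group generated by involutions has no nontrivial torsion-free quotient. Your extra step through a surjection onto $\Z$ is harmless but unnecessary, since $\pi_1(\Gamma)$ is already torsion-free.
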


\subsection{Rank comparison}

The Coxeter rank of $C(\Sigma)$ is, clearly, $\card{E(\Sigma)} = \card{V(\Sigma)} - 1$. By \Tref{main-1}, the rank of the stabilizer is the number of generators of $\Cover$, which is the number of vertices in $\covg{\Sigma}$. But this graph has the same vertex set as~$\Line{\Line{\Sigma}}$, composed of the edges $[abc]$ of $\Line{\Sigma}$, where $b$ is a vertex and $a,c$ distinct neighbors.

Let $\delta(b)$ denote the number of neighbors of a vertex $b$. The rank of $C(\Sigma)_o$ is, therefore
$$\rank C(\Sigma)_o = \sum_{v} \binom{\delta(v)}{2},$$
summing over the vertices $v \in V(\Sigma)$. Let us compare the ranks of $C(\Sigma)$ and the stabilizer.

For a tree $\Sigma$, define the invariant $i(\Sigma) = \sum \binom{\delta(v)}{2} - \card{E(\Sigma)}$. Let $n_d$ denote the number of vertices with $\delta(v) = d$, so that $i(\Sigma) = \sum n_d \binom{d}{2} - (\sum n_d-1)$.
The handshake lemma $\sum d n_d = 2 \card{E(\Sigma)}$ gives us $n_1 = 2+\sum_{d \geq 3}(d-2)n_d$. Consequently, $i(\Sigma) = 1-n_1+\sum [\binom{d}{2}-1]n_d = \sum_{d \geq 3} \binom{d-1}{2}n_d - 1$. The only case where $i(\Sigma)<0$ is when $\Sigma$ is path, in which case $C(\Sigma)$ is finite. Whenever $C(\Sigma)$ is infinite, we proved that
$$\rank C(\Sigma)_o \geq \rank C(\Sigma),$$
as $i(\Sigma) = \rank C(\Sigma)_o - \rank C(\Sigma) \geq 0$, in agreement with Felikson–Tumarkin \cite{Felikson1}. Equality holds only when $\Sigma$ is a generalized $\Y$.

\subsection{Coxeter pairs and examples}

In more common terms, since $C(\Sigma) = W(\Line{\Sigma})$  by \Cref{C=WL}, \Tref{main-1} provides the following embedding.
\begin{thm}\label{nice}
For any tree $\Sigma$, there is an embedding of Coxeter groups
$$\Cover \hra W(\Line{\Sigma}),$$
whose image is a reflection subgroup of index $\card{V(\Sigma)}$.
\end{thm}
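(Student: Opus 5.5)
The plan is to assemble \Tref{nice} from results already established, with little new work. By \Cref{C=WL} we identify $C(\Sigma) = W(\Line{\Sigma})$, so the target group is $C(\Sigma)$ with its transpositional action on $V(\Sigma)$. Fix any vertex $o$. The map $\Phi_o \co \Cover \ra C(\Sigma)$ of \Pref{map} is well defined, and by \Rref{map0} its image lies in $C(\Sigma)_o$. The corollary following the commutativity of \Fref{thegroups2} shows that $\Phi_o$ is an isomorphism onto $C(\Sigma)_o$. Composing with the inclusion $C(\Sigma)_o \hra C(\Sigma) = W(\Line{\Sigma})$ gives the required injective homomorphism $\Cover \hra W(\Line{\Sigma})$.

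For the index, we use that $\Sigma$ is connected, so the edge transpositions generate $S_n$ with $n = \card{V(\Sigma)}$. Hence the action of $C(\Sigma)$ on $V(\Sigma)$ is transitive. By orbit–stabilizer, $[C(\Sigma) : C(\Sigma)_o] = \card{V(\Sigma)}$, which is the index of the image.

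It remains to check that the image is a reflection subgroup. For this we show that each generator $[abc]$ of $\Cover$ maps to a reflection. By \eq{Phidef}, $\Phi_o([abc]) = \pi_{oa}\pi_{bc}\pi_{oa}^{-1}$. This is a conjugate of the Coxeter generator $\set{b,c}$ of $W(\Line{\Sigma})$, hence a reflection. So the image is generated by reflections. This agrees with \Cref{here77}, and it also shows that the embedding sends the Coxeter generators of $\Cover$ to reflections, so it is an embedding of Coxeter systems in the intended sense.

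The main obstacle is already handled earlier: it is the injectivity of $\Phi_o$. That injectivity comes from the Reidemeister–Schreier isomorphism $\Psi_o$ of \Pref{RMhere} together with the factorization $\Psi_o = \Phi_o\circ\Theta$ and the surjectivity of $\Theta$. Here we only need to cite that corollary, or equivalently \Tref{main-1}, and check that the isomorphism it provides is realized by $\Phi_o$ followed by inclusion, rather than by some abstract isomorphism.
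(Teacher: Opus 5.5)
Your proposal is correct and follows the paper's argument: the embedding is $\Phi_o$, which is an isomorphism onto $C(\Sigma)_o$ by \Tref{main-1} (via $\Psi_o=\Phi_o\circ\Theta$ and the surjectivity of $\Theta$), the index comes from orbit--stabilizer for the transitive action on $V(\Sigma)$, and the reflection-subgroup claim is \Cref{here77}. You check the reflection property directly, noting that each $\Phi_o([abc])=\pi_{oa}\pi_{bc}\pi_{oa}^{-1}$ is a conjugate of a simple reflection, rather than going through \Tref{this5.2}; this is the same observation in a slightly more self-contained form.
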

After we proved in \Cref{here77} that the stabilizer is a reflection group, this theorem identifies its Coxeter diagram. This is the orbit-stabilizer count for the natural action of $C(\Sigma) = W(\Line{\Sigma})$ on~$V(\Sigma)$. Fixing any vertex $o \in V(\Sigma)$ and recalling \Eq{pidef}, an explicit embedding $\Phi_o \co \Cover \ra W(\Line{\Sigma})$ is given in \Eq{Phidef} by $[abc] \mapsto \pi_{oa}\pi_{bc}\pi_{oa}^{-1}$.

\begin{exmpl}\label{star4}
For $\Gamma = K_{1,n}$ a star with $n$ leaves, the line graph $\Line{\Gamma} = K_n$ is the complete graph. Taking $o$ to be the central vertex, $C(\Gamma)_o$ is generated by $u_{ij}$ for $\set{i,j} \sub \set{1,\dots,n}$ ($i\neq j$) where $u_{ij}=u_{ji}$, with the relations $u_{ij}^2 = 1$ and $(u_{ij}u_{ik})^3 = 1$ (distinct $i,j,k$). 
When $\set{i,j}$ and $\set{k,\ell}$ are disjoint, the edge connecting these vertices in $\covg{\Gamma}$ carries the label~$\infty$.
\end{exmpl}

\begin{exmpl}\label{e37+}
\Fref{2inf+} adds labels to the graphs of \Fref{2inf} (see \Eref{e37}). Let $\Sigma$ denote the left-most diagram; observe that $\Line{\Sigma}$ is the middle diagram, and~$\covg{\Sigma}$ is on the right. Labelling the edges as $u = \set{1,3}$, $x=\set{3,4}$, and so on, as shown in the diagram, the vertices of $\covg{\Sigma}$ are denoted as $[134] = [ux]$, etc.

Taking $o = 8$ as the fixed point, the Schreier set is $$\set{\pi_{81},\cdots,\pi_{88}} = \set{qzyxu, qzyxv, qzyx, qzy, qz, q, qp, 1}.$$
Let us compute the map $\Phi_o \co \Cover \hra W(\Line{\Sigma})$.
The eight generators of $\Cover$ are mapped to $u,v,x,y,z,p$ which clearly stabilize $8$, and to the pitchforks $\Phi_8([uv]) = \Phi_8([132]) = \pi_{82} \pi_{13} \pi_{28} = qzyxvuvxyzq$ and
$\Phi_8([zp]) = \Phi_8([567]) = \pi_{85} \pi_{67} \pi_{58} = qzpzq$.

Other stabilizers can be obtained by conjugation. Conjugating by $\pi_{58} = zq$ gives $C(\Sigma)_5 = \sg{yxuvuxy, u, v, x, zyz, q, zqpqz, z}$. Here $\Phi_5([yz]) = yzy$ bridges the connected parabolic subgroups $\sg{u,v,x}$ and $\sg{q,z}$; and $\Phi_5([pq]) = zpqpz$ and $\Phi_5([uv]) = yxuvuxy$ are the pitchforks based in~$5$.
\end{exmpl}
\begin{figure}
$$\xymatrix@R=5pt@C=10pt{
1 \ar@{-}[rd]^u & &&& & 7 \ar@{-}[ld]_p \\
{} & 3  \ar@{-}[r]^x \ar@{-}[ld]^v & 4 \ar@{-}[r]^y & 5 \ar@{-}[r]^z & 6 \ar@{-}[rd]_q  \\
2 & &&& & 8
}
\qquad
\xymatrix@R=6pt@C=6pt{
u \ar@{-}[dd] \ar@{-}[rd] &&& & p \ar@{-}[dd] \ar@{-}[ld]\\
{} & x  \ar@{-}[ld] \ar@{-}[r] & y \ar@{-}[r] & z \ar@{-}[rd] & \\
v && && q
}
\qquad
\xymatrix@R=6pt@C=5pt{
{} & {}[ux] \ar@{-}[rd] \ar@{-}[dd] & {} && [zp]  \ar@{-}[dd] & {} \\
{}[uv] \ar@/^3ex/@{-}[rrrrr]|(0.2)\hole|(0.3)\hole|(0.7)\hole|(0.8)\hole^\infty \ar@{-}[ru] \ar@{-}[rd] & {} & [xy] \ar@{-}[r] & [yz]  \ar@{-}[ru] \ar@{-}[rd] && {}[pq] \ar@{-}[lu] \ar@{-}[ld] \\
{} & {}[vx]  \ar@{-}[ru] & {} && [zq] & \\
}
$$
\caption{The graphs of \Eref{e37+}} \label{2inf+}
\end{figure}

\begin{exmpl}\label{DY3}
Let $\Gamma = \Y$ be the claw of \Fref{basic}, with the vertices numbered $0$ for the center, and $1,2,3$ for the leaves of $x,y,z$, respectively. The Coxeter group $C(\Y)$ acts on $\set{0,1,2,3}$ by $x \mapsto (01)$, $y \mapsto (02)$ and $z \mapsto (03)$. The spanning subtree is $\Sigma = \Gamma$. Thus $\pi_{01} = x$, $\pi_{02} = y$ and $\pi_{03} = z$. We choose the fixed vertex to be $o = 0$.
\begin{enumerate}
\item The group $\Cover[\Y]$ is generated by the edges $[102], [103], [203]$ of $\Line{\Y}$, with the relations $\lam^2 = 1$ and $(\lam\lam')^3 = 1$ for every distinct generators $\lam,\lam'$.
\item
Then
$\GG_0(\Y)$ is generated by
$(1,y)=(2,x)$, $(2,z)=(3,y)$, and $(3,x)=(1,z)$, with the
relations $\lam^2 = 1$ and $(\lam\lam')^3 = 1$ for every distinct generators $\lam,\lam'$.
\item The isomorphism $\Theta \co \GG_0(\Y) \ra \Cover[\Y]$ is defined by $(3,x) \mapsto [103]$, $(2,z) \mapsto [302]$, and $(1,y) \mapsto [201]$.
\item The group $\GG_0(\Y)$ is isomorphic to the stabilizer of $0$ under the action of $C(\Y)$. It can be identified as the image of $\Phi_0 \co \Cover \ra C(\Y)$, defined in \eq{Phidef}. Taking $b = 0$ we get $[a0c] \mapsto \pi_{0a}\pi_{0c}\pi_{0a}^{-1}$; namely $[103] \mapsto xzx$, $[201] \mapsto yxy$, and $[302] \mapsto zyz$. It follows that $C(\Y)_0 = \sg{xzx, yxy, zyz}$.
\item\label{DY3.emb} We thus obtained an embedding $$W(\triangle) \isom \Cover[\Y] \isom C(\Y)_0 \hra C(\Y) = W(\Line{\Y}) = W(\triangle),$$ of index $4$, defined by sending $x,y,z$ to $xzx,yxy,zyz$. 
\item\label{DY3.6} Conjugating $C(\Y)_0$ by $z$ takes this subgroup to $C(\Y)_3 = \sg{x, y, zyxyz} = \sg{x,y,\alpha^2,\beta^2,\gamma^2}$; this can be extracted directly from the \Eref{DY2}, which gives the kernel of $C(\Y) \ra S_4$.
\end{enumerate}
\end{exmpl}

\begin{exmpl}\label{ex69}
Let $n,n',n'' \geq 1$.
Let $\Y_{n,n',n''}$ denote the generalized $\Y$ tree, obtained by gluing the paths $\path[n]$, $\path[n']$, and $\path[n'']$
in a common vertex. Let $\triangle_{n,n',n''} = \Line{\Y_{n+1,n'+1,n''+1}}$, which is the triangle graph to which paths
$\path[n]$, $\path[n']$, and $\path[n'']$ are glued.
For example $\Y = \Y_{2,2,2}$ and $\triangle = \triangle_{1,1,1}$. Now let $\triangle^*_{n,n',n''} = \Line{\triangle_{n+1,n'+1,n''+1}}$.  This is the triangle graph, subdivided into four triangles, with paths of lengths $n, n',n''$ glued at the external vertices. The graphs are illustrated in \Fref{Deltastar}.

As noted in \Rref{YY+},
$$\covg{\Y_{n+2,n'+2,n''+2}} = \Line{\Line{\Y_{n+2,n'+2,n''+2}}} = \Line{\triangle_{n+1,n'+1,n''+1}} = \triangle^*_{n,n',n''}.$$
\begin{figure}
$$
\xymatrix@R=4pt@C=4pt{
*={\bullet} \ar@{-}[rd] & {} & {} & {}  & *={\bullet} \ar@{-}[ld] \\
 & *={\bullet} \ar@{-}[rd] & {} & *={\bullet} \ar@{-}[ld]  & \\
{} & {} & *={\bullet}\ar@{-}[d] & {} & {} \\
{} & {} & *={\bullet} \ar@{-}[d] & {} & {} \\
{} & {} & *={\bullet} & {} & {}
}
\qquad
\xymatrix@R=5pt@C=0pt{
*={\bullet} \ar@{-}[rd] & {} & {} & {} & *={\bullet} \ar@{-}[ld]\\
{} & *={\bullet} \ar@{-}[rd] \ar@{-}[rr] & {} & *={\bullet} \ar@{-}[ld] & {}\\
{} & {} & *={\bullet} \ar@{-}[d] & {} &  {}\\
{} & {} & *={\bullet} & {} &  {}
}
\qquad
\xymatrix@R=8pt@C=1pt{
*={\bullet} \ar@{-}[rr] \ar@{-}[rd] & {} & *={\bullet} \ar@{-}[ld] \ar@{-}[rr] \ar@{-}[rd] & {} & *={\bullet}  \ar@{-}[ld]\\
{} & *={\bullet} \ar@{-}[rr] \ar@{-}[rd] & {} & *={\bullet} \ar@{-}[ld] & {}  \\
{} & {} & *={\bullet} & {} & {}
}
$$
\caption{The graphs $\Y_{3,3,3}$, $\triangle_{2,2,2}$ and $\triangle^*_{1,1,1}$, left to right}\label{Deltastar}
\end{figure}
Taking $\Sigma = \Y_{n+2,n'+2,n''+2}$ in \Tref{nice}, we obtain a finite-index embedding of simply laced Coxeter groups,
\begin{equation}\label{WW}
W(\triangle^*_{n,n',n''}) \hra W(\triangle_{n+1,n'+1,n''+1}),
\end{equation}
of index $n+n'+n''+4$. In our terms this is the embedding
$$C(\triangle_{n+1,n'+1,n''+1}) \hra C(\Y_{n+2,n'+2,n''+2}).$$
\end{exmpl}

\begin{rem}\label{n=0}
In \Eref{ex69} we assume $n,n',n''\geq 1$. With respect to the diagrams in \Fref{Deltastar}, taking (for example) $n = 0$ amounts to removing one vertex from each of $\Y_{n+2,n'+2,n''+2}$, $\triangle_{n+1,n'+1,n''+1}$, and $\triangle^*_{n,n',n''}$.
\end{rem}

By \Rref{YY+}, all the cases in which \Tref{nice} embeds a {\it{simply-laced}}\, Coxeter group in the infinite group $W(\Line{\Sigma})$ are given by \Eq{WW} for $n,n',n'' \geq 0$ (interpreted by \Rref{n=0}). In this notation, the example given in the Introduction is $W(\triangle^*_{0,1,1}) \hra W(\triangle_{1,2,2})$. \Eref{DY3}.\eq{DY3.emb} is $W(\triangle^*_{0,0,0}) \hra W(\triangle_{1,1,1})$.

\begin{rem}
To illustrate the connection between the various objects, note that the projection $\Cover[\Sigma] \ra W(\Line{\Line{\Sigma}})$ positions the stabilizer $C(\Sigma)_o$ both as a subgroup of $C(\Sigma)$ and as a cover of $C(\Line{\Sigma})$, by the maps:
$$\xymatrix{C(\Sigma)_o \ar@{^(->}[r] & C(\Sigma) \ar@{=}[r] & W(\Line{\Sigma})
\\
\Cover \ar@{->}[u]|\cong^{\Phi_o} \ar@{->>}[r] & C(\Line{\Sigma}) \ar@{=}[r]  & W(\Line{\Line{\Sigma}})
}$$
\end{rem}

\medskip
\section{Arbitrary graphs and relative conjugation extensions}\label{sec:ex}

In \Sref{sec:6} we identified the stabilizer $\GG_0(\Sigma) \isom C(\Sigma)_o$ as a Coxeter group, when~$\Sigma$ is a tree. We also know that the stabilizer $C(\Gamma)_o$, for an arbitrary graph $\Gamma$, is isomorphic to $\GG_0(\Sigma,\Gamma)$. In this section we show that if $\pi_1(\Gamma)$ is of rank $1$, then $\GG_0(\Sigma,\Gamma)$ admits a relative presentation over a Coxeter group with one additional generator and only conjugation relations.

In order to get a better understanding of $\GG_0(\Sigma,\Gamma)$, we need to know that the canonical map $\GG_0(\Sigma) \ra \GG_0(\Sigma,\Gamma)$ is an embedding.
As before, let $\Gamma$ be a finite simple graph, and $\Sigma \sub \Gamma$ a spanning subtree. Let $\Sigma \sub \Gamma' \sub \Gamma$ be an intermediate spanning subgraph.
\begin{prop}
The map $\iota \co \GG_0(\Sigma,\Gamma') \ra \GG_0(\Sigma,\Gamma)$,
defined by sending each generator $(\omega,x)$ or $x_a$ of $\GG_0(\Sigma,\Gamma')$ (as described in \Pref{thepres}) to its respective counterpart in $\GG_0(\Sigma,\Gamma)$, is an injection.
\end{prop}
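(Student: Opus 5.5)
The plan is to realize $\iota$ as the restriction of the Reidemeister--Schreier isomorphisms to a parabolic subgroup, so that injectivity comes from \Pref{parabolic}.

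First, by \Pref{RMhere} applied with the Schreier set $\s_a=\pi_{oa}$ (the same for $\Gamma'$ and $\Gamma$, since it depends only on $\Sigma$), we have isomorphisms
$$\Psi_o' \co \GG_0(\Sigma,\Gamma') \ra C(\Gamma')_o, \qquad \Psi_o \co \GG_0(\Sigma,\Gamma) \ra C(\Gamma)_o.$$
Let $j \co C(\Gamma') \ra C(\Gamma)$ be the map induced by the inclusion $E(\Gamma')\sub E(\Gamma)$. By \Pref{parabolic}, $j$ is injective, with image the standard parabolic subgroup $\sg{E(\Gamma')}$. Since $V(\Gamma')=V(\Gamma)$ and each edge acts by the same transposition in both groups, $j$ is equivariant for the actions on the common vertex set. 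Hence $j$ maps $C(\Gamma')_o$ into $C(\Gamma)_o$, and it is still injective there.

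Second, I will check that $\iota$ is a well-defined homomorphism. Every relation of $\GG_0(\Sigma,\Gamma')$ listed in \Pref{thepres} involves only edges of $\Gamma'$. Disjointness and intersection of edges of $\Gamma'$ are the same whether read in $\Gamma'$ or in $\Gamma$, and relation \eq{54.1c} concerns only $E(\Sigma)$. So each relation of $\GG_0(\Sigma,\Gamma')$ is literally one of the defining relations of $\GG_0(\Sigma,\Gamma)$, and $\iota$ is well defined.

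Third, I will show that the square commutes, namely $\Psi_o\circ\iota = j\circ\Psi_o'$. It suffices to check this on generators, using the formulas \eq{Psidefhere1} and \eq{Psidefhere2} of \Rref{whatisPsi}. On both sides a generator $(\omega,x)$ goes to $\pi_{o\omega}x\pi_{o\omega}^{-1}$, and a generator $x_a$ with $x=\set{a,b}$ goes to $\pi_{oa}x\pi_{ob}^{-1}$. These words use only edges of $\Sigma\sub\Gamma'$ together with $x\in E(\Gamma')$, so the two images agree.

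Finally, $\Psi_o\circ\iota = j\circ\Psi_o'$ is a composition of injective maps, hence injective, and therefore $\iota$ is injective. The main point is the second step, verifying that every relation of $\GG_0(\Sigma,\Gamma')$ holds in $\GG_0(\Sigma,\Gamma)$. Once that is in place, injectivity is inherited from \Pref{parabolic}, and no hands-on word-problem argument is needed.
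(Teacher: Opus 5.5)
Your proof is correct and follows the paper's argument: the commutative square $\Psi_o\circ\iota = j\circ\Psi_o'$, combined with the injectivity of $C(\Gamma')\ra C(\Gamma)$ from \Pref{parabolic}. Two small additions are worth noting: your explicit check that $\iota$ is well defined fills in a step the paper leaves implicit, and you correctly observe that only the injectivity of $\Psi_o'$ is needed, since $\Psi_o\circ\iota$ being injective already forces $\iota$ to be injective.
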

\begin{proof}
Fix a vertex $o$ in $\Gamma$. The diagram
$$\xymatrix@R=12pt@C=16pt{
\GG_0(\Sigma,\Gamma) \ar@{->}[r]^{\Psi_o} & C(\Gamma)_o \ar@{^(->}[r] & C(\Gamma) \\
\GG_0(\Sigma,\Gamma') \ar@{->}[u]^{\iota} \ar@{->}[r]^{\Psi'_o} & C(\Gamma')_o \ar@{->}[u] \ar@{^(->}[r] & C(\Gamma') \ar@{^(->}[u] \\
}$$
commutes by the formula for $\Psi_o$ and $\Psi_o'$ in \Rref{whatisPsi}. The map $C(\Gamma') \ra C(\Gamma)$ is an embedding by \Pref{parabolic}, and
consequently  $C(\Gamma')_o \ra C(\Gamma)_o$ is an embedding. Since the $\Psi_o$ are isomorphisms, the map $\iota$ is an injection as well.
\end{proof}

In particular, taking $\Gamma' = \Sigma$, we get:
\begin{cor}
The map $\iota \co \GG_0(\Sigma) = \GG_0(\Sigma,\Sigma) \ra \GG_0(\Sigma,\Gamma)$ is an injection.
\end{cor}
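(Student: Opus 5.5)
The statement is the special case $\Gamma' = \Sigma$ of the preceding proposition, so the plan is simply to check that this specialization is legitimate and then read off the conclusion.

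First I check that $\Sigma$ qualifies as an intermediate spanning subgraph. Since $\Sigma \sub \Gamma$ is a spanning subtree, the inclusions $\Sigma \sub \Sigma \sub \Gamma$ hold trivially, and $\Sigma$ is spanning. The preceding proposition therefore applies with $\Gamma' = \Sigma$. By definition $\GG_0(\Sigma) = \GG_0(\Sigma,\Sigma)$, so the map $\iota$ it provides is exactly the canonical map in the statement. It sends each generator $(\omega,x)$ of $\GG_0(\Sigma)$ to the generator of $\GG_0(\Sigma,\Gamma)$ with the same name. There are no generators $x_a$ to worry about, since these are trivial for $x \in E(\Sigma)$ by \Pref{thepres}.

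For the reader who wants the mechanism spelled out, I would recall the commuting square. The map $\Psi'_o \co \GG_0(\Sigma) \ra C(\Sigma)_o$ is an isomorphism by \Pref{RMhere}. The inclusion $C(\Sigma) \hra C(\Gamma)$ is injective by \Pref{parabolic}, because $C(\Sigma)$ is the standard parabolic subgroup generated by $E(\Sigma)$. Restricting this inclusion gives an injection $C(\Sigma)_o \hra C(\Gamma)_o$.

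By the formulas in \Rref{whatisPsi}, which are the same for both graphs since the Schreier set $\pi_{oa}$ depends only on $\Sigma$, we have $\Psi_o \circ \iota = \Psi'_o$ followed by the inclusion. The right-hand side is injective, hence $\iota$ is injective.

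There is no real obstacle here. The only point needing care is that the Schreier set used for $\GG_0(\Sigma,\Gamma)$ and for $\GG_0(\Sigma,\Sigma)$ is the same, namely the tree paths $\pi_{oa}$ in $\Sigma$. This is what makes the square commute on generators.
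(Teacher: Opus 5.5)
Your proposal is correct and follows the paper's route: the corollary is the case $\Gamma' = \Sigma$ of the preceding proposition, whose proof is exactly the commuting square you describe. That square is built from the isomorphisms $\Psi'_o$ and $\Psi_o$, defined using the common Schreier set $\pi_{oa}$, together with the parabolic embedding $C(\Sigma) \hra C(\Gamma)$ from \Pref{parabolic}.
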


The advantage here is that the isomorphism $\Theta \co \GG_0(\Sigma) \ra \Cover$, defined in \Pref{Thetadef}, provides us with a concise presentation, which can be used to give a better presentation for $\GG_0(\Sigma,\Gamma)$. Let us phrase this as an algorithm whose output is a presentation of $\GG_0(\Sigma,\Gamma) \isom C(\Gamma)_o$.

\begin{rem}[Simplified presentation for $C(\Gamma)_o$]\label{proc1}
Let $\Gamma$ be a simple graph, and $\Sigma$ a spanning subtree. A simplified presentation of $\GG_0(\Sigma,\Gamma)$ can be obtained from the one given in \Pref{thepres}, by applying $\Theta$ to the generators $(\omega,x)$ for $x \in E(\Sigma)$; moreover the relations involving pairs of generators $x,y$ from the spanning subtree become redundant in the presence of $\Cover$.
After this procedure, $\GG_0(\Sigma,\Gamma)$ is presented as generated by the subgroup $\Cover$ and two extra families of generators,  $(\omega,x)$ for $\omega \not \in x$, and $x_a$ for $a \in x$, ranging only over the $x \in E(\Gamma) - E(\Sigma)$.
\end{rem}

\begin{rem}\label{proc2}
Especially powerful in the procedure of \Rref{proc1} is the relation~\eq{54.2c} from \Pref{thepres}: if $x \not \in E(\Sigma)$, then $(c,x) = (d,x)$ for any edge $y = \set{c,d} \in E(\Sigma)$ disjoint from $x$. Letting $\Sigma - x$ denote the forest obtained by deleting the vertices of $x$ from $\Sigma$, there is thus only one generator of the form $(\omega,x)$ for each connected component of $\Sigma - x$.
\end{rem}

To gain a better understanding of $\GG_0(\Sigma,\Gamma)$, we focus on the part generated by the generators $(\omega,x)$, leaving aside the~$x_a$.
\begin{defn}\label{Ndef}
Let $\Gamma$ be a connected simple graph with a spanning subtree $\Sigma$. We define $N_0(\Sigma,\Gamma)$ to be the group generated by all the pairs $(\omega,x)$
for any edge~$x$ of~$\Gamma$ and any vertex~$\omega$ not on $x$;
subject to the relations:
\begin{enumerate}
\item $(\omega,x)^2 = 1$ \quad for every $x = \set{a,b} \in E(\Gamma)$ and $\omega \neq a,b$;
\item For every pair $x = \set{a,b}$ and $y = \set{c,d}$ of disjoint edges in $\Gamma$:
\begin{enumerate}
\item $[(\omega,x),(\omega,y)] = 1$ \quad for every $\omega \neq a,b,c,d$;
\item $(b,y) = (a,y)$ \quad if $x \in E(\Sigma)$;
\item $(d,x) = (c,x)$ \quad if $y \in E(\Sigma)$;
\end{enumerate}
\item For every pair $x = \set{a,b}$ and $y = \set{b,c}$ of (distinct) intersecting edges in~$\Gamma$:
\begin{enumerate}
\item $(\omega,x)(\omega,y)(\omega,x) = (\omega,y)(\omega,x)(\omega,y)$ \quad for every $\omega \neq a,b,c$;
\item $(c,x) = (a,y)$ \quad if $x,y \in E(\Sigma)$.
\end{enumerate}
\end{enumerate}
\end{defn}

Two things are clear from this definition. First, that $N_0(\Sigma,\Gamma)$ is a Coxeter group. And second, that there is a natural map $N_0(\Sigma,\Gamma) \ra \GG_0(\Sigma,\Gamma)$ sending each $(\omega,x)$ to its counterpart.
However, it should be noted that $N_0(\Sigma,\Gamma)$ may be missing relations induced by conjugation (as demonstrated in the proof of the relation $s_0ts_0=ts_0t$ in \Pref{cyctail}). Thus, in general, the map $N_0(\Sigma,\Gamma) \ra \GG_0(\Sigma,\Gamma)$ is not injective.
If $\Gamma = \Sigma$ then it is easy to see that $N_0(\Sigma,\Gamma) = \GG_0(\Sigma,\Gamma)$ (as already hinted in \Cref{relstree}). Moreover, $\GG_0(\Sigma)$ is a parabolic subgroup of $N_0(\Sigma,\Gamma)$, generated by the $(\omega,x)$ for $x \in E(\Sigma)$.

We shall call a relative presentation of the form
$$\sg{N,q \subjectto q u_\lam q^{-1}=v_\lam, \quad u_\lam,v_\lam \in N},$$
a {\bf{relative conjugation presentation}} over $N$, and the group so presented a {\bf{relative conjugation extension}} of $N$. This terminology refers only to the form of the presentation: it does not assert that the natural map from $N$ is injective, or that the prescribed pairs extend to an isomorphism between subgroups of $N$. If the conjugation induces an isomorphism from the subgroup generated by the $u_\lam$ to the subgroup generated by the $v_\lam$, this becomes an HNN extension.

\smallskip
\subsection{The stabilizer for unicyclic graphs}

Let $\Gamma$ be a connected simple graph with a unique cycle. Taking $\Sigma$ to be a spanning subtree, there is a unique edge $p \in \Gamma$ such that $\Gamma = \Sigma \cup \set{p}$. We will give a relative conjugation presentation of $\GG_0(\Sigma,\Gamma)$ over a Coxeter group.


Let $a,b$ be the vertices on the edge $p$. From the definition it is clear that  $\GG_0(\Sigma, \Gamma)$ is generated by the image of the map $N_0(\Sigma,\Gamma)$ and the generator $p_a$. A thorough inspection of \Pref{thepres} leads to the following:
\begin{prop}\label{PresCase1}
Assume $\Gamma = \Sigma \cup \set{p}$. The group $\GG_0(\Sigma,\Gamma)$ is generated by $N_0(\Sigma,\Gamma)$, and the generator $p_a$, subject to the following relations:
\begin{enumerate}
\item $p_a(b,y)p_a^{-1} = (a,y)$ \qquad for every edge $y = \set{c,d}$ disjoint from $p$;
\item $p_a(c,p)p_a^{-1} = (a,y)$ \qquad for every edge $y = \set{b,c}$;
\item $p_a(b,y)p_a^{-1} = (c,p)$ \qquad for every edge $y = \set{a,c}$.
\end{enumerate}
\end{prop}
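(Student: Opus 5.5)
Start from the presentation of $\GG_0(\Sigma,\Gamma)$ in \Pref{thepres}, with $\Gamma = \Sigma\cup\set{p}$ and $p=\set{a,b}$. Sort its generators and relations into two groups: those involving only the $(\omega,x)$, and those involving the type-$\infty$ generators $x_a$. By relation \eq{54.1c}, every $x_a$ with $x\in E(\Sigma)$ is trivial. The only surviving type-$\infty$ generators are $p_a$ and $p_b$, and $p_b = p_a^{-1}$ by relation (1b). So $\GG_0(\Sigma,\Gamma)$ is generated by the $(\omega,x)$ together with $p_a$. Now run through each relation of \Pref{thepres} and substitute $x_a=1$ for $x\in E(\Sigma)$. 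The relations that then involve no $p_a^{\pm1}$ should be exactly the defining relations of $N_0(\Sigma,\Gamma)$ in \Dref{Ndef}:
\begin{enumerate}
\item (2b) with $x\in E(\Sigma)$ becomes $(b,y)=(a,y)$;
\item (2c) with $y\in E(\Sigma)$ becomes $(d,x)=(c,x)$;
\item (3b) with $x,y\in E(\Sigma)$ becomes $(c,x)=(a,y)$;
\item the relations (1a), (2a) and (3a) are copied verbatim.
\end{enumerate}
This identifies the $p_a$-free part of the presentation with $N_0(\Sigma,\Gamma)$.

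The remaining relations are those in which $p$ contributes a nontrivial $p_a$ or $p_b$; each is a conjugation relation.
\begin{enumerate}
\item Relation (2b) with $x=p$ and $y=\set{c,d}$ disjoint from $p$ gives $p_a(b,y)p_a^{-1}=(a,y)$, which is item (1). Relation (2c) with the roles of $x$ and $y$ exchanged is the same relation, read from the other side.
\item For $p$ meeting a tree edge $y$, relation (3b) has two orientations. Writing $y=\set{b,c}$, so that $p$ and $y$ share $b$, the shared vertex subscript gives $p_b(a,y)p_b^{-1}$, while $y_b=1$. This yields $(c,p)=p_b(a,y)p_b^{-1}$, and inverting $p_b=p_a^{-1}$ gives $p_a(c,p)p_a^{-1}=(a,y)$, which is item (2).
\item Similarly, for $y=\set{a,c}$ sharing $a$ we get $(c,p)=p_a(b,y)p_a^{-1}$, which is item (3).
\end{enumerate}
At this step I will carefully match the letters $a,b,c$ in (3b) against the fixed names of $p$'s endpoints, since the relabeling is where sign or inverse slips are likely.

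Relation (1b), $p_ap_b=1$, is absorbed by eliminating $p_b$ via a Tietze transformation. Relation \eq{54.1c} does not apply to $p$. Finally, check that every relation of \Pref{thepres} has been accounted for exactly once; this bookkeeping over orientations is the main obstacle rather than any conceptual point. In particular, $p$ is disjoint from no edge $x$ outside $\Sigma$ other than itself, because $\Gamma$ is unicyclic, so no relation couples two type-$\infty$ generators.
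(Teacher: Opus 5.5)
Your proposal is correct and is essentially the paper's own argument. The paper justifies this statement only by ``a thorough inspection'' of \Pref{thepres}, which is what you carry out: you eliminate the trivial tree-edge generators and set $p_b=p_a^{-1}$ by Tietze moves, recognize the $p_a$-free relations as exactly those of $N_0(\Sigma,\Gamma)$ in \Dref{Ndef}, and read items (1)--(3) off relations \eq{54.2b}, \eq{54.2c} and \eq{54.3b} (your orientation bookkeeping, using $y_b=1$ and $y_a=1$ for the tree edge $y$, checks out).
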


\begin{cor}\label{caseuni}
Assume that $\Gamma=\Sigma\cup\set{t}$, where $t=\set{a,b}$. Then $\GG_0(\Sigma,\Gamma)$ admits the following relative presentation over $N_0(\Sigma,\Gamma)$:
$$
\begin{aligned}
\GG_0(\Sigma,\Gamma) =\big\langle N_0(\Sigma,\Gamma),t_a\ \big|\;&  t_a(b,y)t_a^{-1}=(a,y)  &&\text{for $y\in E(\Gamma)$ disjoint from $t$},\\
&  t_a(c,t)t_a^{-1}=(a,\set{b,c})  &&\text{for $\set{b,c}\in E(\Sigma)$},\\
&  t_a(b,\set{a,c})t_a^{-1}=(c,t)  &&\text{for $\set{a,c}\in E(\Sigma)$} \big\rangle .
\end{aligned}$$
Thus $\GG_0(\Sigma,\Gamma)$ is a relative conjugation extension of $N_0(\Sigma,\Gamma)$ in the terminology above: it is obtained by adjoining one generator and imposing the indicated conjugation relations.
\end{cor}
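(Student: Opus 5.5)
The corollary is a repackaging of \Pref{thepres} in the case $\Gamma=\Sigma\cup\set{t}$. I would prove it by sorting the relations of \Pref{thepres} into two groups: those involving only generators $(\omega,x)$, and those also involving the $x_a$. Since $t$ is the only edge outside $\Sigma$, \Pref{thepres}.\eq{54.1c} kills every $x_a$ with $x\in E(\Sigma)$. The relation $t_at_b=1$ then eliminates $t_b=t_a^{-1}$. So $\GG_0(\Sigma,\Gamma)$ is generated by the $(\omega,x)$ together with the single extra generator $t_a$.

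Next I would compare the relations of \Pref{thepres} that involve no $x_a$ with those of \Dref{Ndef}. They coincide, since \Dref{Ndef} was written precisely by deleting trivial factors. There are three cases to check.
\begin{enumerate}
\item For \eq{54.2b}, when $x\in E(\Sigma)$ we have $x_a=1$, so the relation becomes $(b,y)=(a,y)$.
\item Relation \eq{54.2c} behaves symmetrically.
\item For \eq{54.3b}, when $x,y\in E(\Sigma)$ both conjugators vanish and the relation becomes $(c,x)=(a,y)$.
\end{enumerate}
The remaining relations with a nontrivial conjugator are exactly those in which $t$ plays the role of the conjugating edge. Hence $\GG_0(\Sigma,\Gamma)$ is the quotient of the free product $N_0(\Sigma,\Gamma)*\sg{t_a}$ by those leftover relations, and by construction each of them has the form $t_{a}^{\pm1}uv^{-1}\cdots$, that is, it is a conjugation relation.

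The main bookkeeping step is listing the leftover relations and normalising them to the conjugator $t_a$.
\begin{enumerate}
\item In \eq{54.2b} with $x=t=\set{a,b}$ and $y$ disjoint from $t$, we get $t_a(b,y)t_a^{-1}=(a,y)$. The version with $t_b$ is the same relation after substituting $t_b=t_a^{-1}$ and inverting.
\item In \eq{54.2c} with $y=t$, the relation is the same as in the previous item after relabelling.
\item In \eq{54.3b} with $y=t$ and $x=\set{c,b}\in E(\Sigma)$, that is, with $t$ and the tree edge sharing the vertex $b$, we have $x_b=1$. This gives $t_b(a,x)t_b^{-1}=(c,t)$, which is equivalent to $t_a(c,t)t_a^{-1}=(a,\set{b,c})$.
\item The case where the shared vertex is $a$ is handled the same way, using $t_a$ directly, and gives $t_a(b,\set{a,c})t_a^{-1}=(c,t)$.
\item The case where both intersecting edges are $t$ cannot occur, since the edges are distinct.
\end{enumerate}
This matches the three families in \Pref{PresCase1}. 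The statement then follows because those relations, together with $N_0(\Sigma,\Gamma)$, give exactly this presentation.

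The obstacle I expect is orientation bookkeeping: making sure each rotated form of \eq{54.3b} reduces to one of the listed relations rather than to a genuinely new one. I would check this directly using the three rotations written out in the proof of \Pref{thepres}.
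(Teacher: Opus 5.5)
Your proposal is correct and follows the paper's route. The paper obtains \Pref{PresCase1}, and hence this corollary, by ``a thorough inspection of \Pref{thepres}''. That inspection is exactly your sorting of relations: killing $x_a$ for tree edges, eliminating $t_b=t_a^{-1}$, identifying the relations free of $t_a$ with those of \Dref{Ndef}, and normalizing the surviving instances of \eq{54.2b}, \eq{54.2c} and \eq{54.3b} to conjugation by $t_a$, with the case split on whether the shared vertex is $a$ or $b$ that the paper leaves implicit.
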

Notice that the natural map $N_0(\Sigma,\Gamma) \ra \GG_0(\Sigma,\Gamma)$ is not injective in general, see \Eref{badN0bad}.

The rest of this subsection is devoted to a family of relatively simple examples.

\smallskip
\subsection{Graphs with a spanning path}\label{ss:71}

Preparing for the following subsections, consider the path graph $\Sigma = \path[n]$. Denote the vertices by $1,\dots,n$, and let $u_i = \set{i,i+1}$ be the edges, $i = 1,\dots,n-1$. The vertices of the line graph $\Line{\path[n]}$ are $u_1,\dots,u_{n-1}$. The vertices in $\covg{\path[n]} = \Line{\Line{\path[n]}}$ are the edges $\set{u_i,u_{i+1}}$ of $\Line{\path[n]}$, which we denote as $s_j = \set{u_j,u_{j+1}} = [j\,j+1\,j+2]$,\ $j = 1,\dots,n-2$. Note that $\Cover[{\path[n]}] \isom S_{n-1}$, with the Coxeter generators $s_1,\dots,s_{n-2}$. By definition, the isomorphism $\Theta \co \GG_0(\Sigma) \ra \Cover \isom S_{n-1}$ is defined for $\omega \neq i,i+1$ by
$$\Theta \co (\omega,u_i) \mapsto \begin{cases}
s_{i-1}& \omega<i \\ 
s_{i}& i+1<\omega\end{cases}.$$

Note that if $\path[n]$ is a spanning subtree of any graph $\Gamma$, and $x$ is a new edge of $\Gamma$, then by \Rref{proc2} there are at most three distinct generators of the form $(\omega,x)$.

\smallskip
\subsection{The cycle}

Let us now consider the cycle graph $\Gamma = \cyc$, for which $C(\Gamma)$ is virtually abelian. First we build on the Coxeter representation.
\begin{exmpl}\label{cyccase}
Let $\Gamma = \cyc$ be the cycle graph for $n> 3$. Letting $S_n$ act naturally on~$\Z^n$, denote by~$(\Z^n)_0$ the submodule of zero-sum vectors. The Coxeter group $C(\cyc)$ is the semidirect product $S_n \ltimes (\Z^n)_0$, as discussed in \Eref{cn}.
Let $C(\cyc)_o = S_{n-1} \ltimes (\Z^n)_0$ denote the stabilizer of the vertex $n$.
We have a short exact sequence $$0 \lra (\Z^{n-1})_0 \lra (\Z^n)_0 \lra \Z \lra 0$$
of $S_{n-1}$-modules, 
where the projection is onto the $n$th component.
Therefore, we have a short exact sequence of groups,
\begin{equation}\label{Ccyc0}
1 \lra C(\cyc[n-1]) \lra C(\cyc)_o \lra \Z \lra 1,
\end{equation}
embedding $C(\cyc[n-1])$ as a normal subgroup of $C(\cyc)_o$.
\end{exmpl}
A presentation, which we are about to exhibit, will describe the action of the cyclic quotient on $C(\cyc[n-1])$ which is implicit in \eq{Ccyc0}. First we treat the case $n = 3$.

\begin{exmpl}
Let $\Gamma$ be the graph $\triangle$, as given in \Fref{basic}, with a spanning subtree $\Sigma = \set{x,y}$. In \Eref{DY2} we showed that $C(\triangle) = S_3 \ltimes (\Z^3)_0$ where the normal subgroup is $\sg{\alpha,\beta,\gamma}$. Let $o$ denote the vertex $o = y \cap z$. Then $C(\triangle)_o = \sg{x, \alpha,\beta,\gamma}$ where $\alpha\beta\gamma=1$. (Compare to \Eref{DY5}.)
Notice that $x\alpha x^{-1} = \alpha^{-1}$. We have a short exact sequence
\begin{equation}\label{Ccyc0;n=3}
1 \lra \sg{x,\alpha} \lra C(\triangle)_o \lra \Z \lra 1,
\end{equation}
where the map to $\Z$ is defined by $x \mapsto 0$, $\alpha \mapsto 0$, $\beta \mapsto 1$ and $\gamma \mapsto -1$. The kernel is $\sg{x,\alpha} \isom D_{\infty}$, the Coxeter group of type $\tilde{A}_1$.
\end{exmpl}

We keep the notation from \Ssref{ss:71}. Realize $\cyc[n]$, where $n>3$, as the graph obtained by adding the edge $p = \set{1,n}$ to the path $\path[n]$. Applying the procedure of \Rref{proc1}, we obtain the following presentation.
\begin{prop}\label{thepres.which}
The group $\GG_0(\path[n],\cyc[n])$ is generated by $\Cover[{\path[n]}] = \sg{s_1,\dots,s_{n-2}}$ and the generators of the form $(\omega,p)$ for $\omega \not \in \set{1,n}$ and $p_1,p_n$,
subject to the following relations:
\begin{enumerate}
\item
\begin{enumerate}
\item $(\omega,p)^2 = 1$ \quad for every\ $\omega \neq 1,n$;
\item\label{TC.1b} $p_1p_n = 1$;
\end{enumerate}
\item For every\ $i = 1,\dots,n-3$: 
\begin{enumerate}
\item $[s_i,(\omega,p)] = 1$ \quad for every\ $1 < \omega < i+1$;
\item $[s_{i+1},(\omega,p)] = 1$ \quad for every\ $i+2 < \omega < n$;
\item $s_{i+1} = p_1^{-1} s_{i} p_1$;
\item\label{meme} $(i+1,p) = (i,p)$;
\end{enumerate}
\item
\begin{enumerate}
\item $s_1(\omega,p)s_1 = (\omega,p)s_1(\omega,p)$ \quad for every $\omega \neq 1,2,n$;
\item $p_n^{-1}s_1p_n = (2,p)$; 
\end{enumerate}
\item
\begin{enumerate}
\item $s_{n-2}(\omega,p)s_{n-2} = (\omega,p)s_{n-2}(\omega,p)$ \quad for every $\omega \neq 1,n-1,n$;
\item $p_1^{-1}s_{n-2}p_1 = (n-1,p)$. 
\end{enumerate}
\end{enumerate}
\end{prop}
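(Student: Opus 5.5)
The approach is to specialize \Pref{thepres} to $\Gamma = \cyc[n]$ with spanning tree $\Sigma = \path[n]$ and new edge $p = \set{1,n}$, and then carry out the procedure of \Rref{proc1}. That is, every generator $(\omega,u_i)$ with $u_i \in E(\Sigma)$ is replaced by its image under $\Theta$, namely $s_{i-1}$ if $\omega<i$ and $s_i$ if $\omega>i+1$, as in \Ssref{ss:71}. All relations of \Pref{thepres} involving only edges of $\Sigma$ are exactly the relations of $\GG_0(\path[n])$. They are therefore equivalent to the Coxeter relations of $\Cover[{\path[n]}] = \sg{s_1,\dots,s_{n-2}}$, because $\Theta$ is an isomorphism (\Sref{sec:6}), and they may be discarded. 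The generators $(u_i)_a$ are trivial by \Pref{thepres}\eq{54.1c}. What remains is to list, relation by relation, the relations of \Pref{thepres} in which $p$ occurs, and to rewrite them through $\Theta$.

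I would organize this by the position of the partner edge $u_i$ relative to $p$.

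\begin{enumerate}
\item Relations of type (1) for $x=p$ give (1a) and (1b) directly.
\item Every $u_i$ with $2\le i\le n-2$ is disjoint from $p$. Relation \eq{54.2a} at a vertex $\omega \ne 1,n,i,i+1$ gives $[(\omega,p),\Theta(\omega,u_i)]=1$. Reindexing (taking $s_{i-1}$ when $\omega<i$ and $s_i$ when $\omega>i+1$) yields (2a) and (2b) with the stated ranges.
\item Relation \eq{54.2b}, applied with $x=u_i$ and $y=p$, gives $(i+1,p)=(i,p)$, since $(u_i)_a=1$. This is (2d).
\item Relation \eq{54.2c}, $p_1(n,u_i)p_1^{-1}=(1,u_i)$, becomes $p_1 s_i p_1^{-1} = s_{i-1}$. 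After the index shift this is (2c).
\item The edges $u_1$ and $u_{n-1}$ meet $p$ at the vertices $1$ and $n$ respectively. For these pairs, \eq{54.3a} gives the braid relations (3a) and (4a), using $\Theta(\omega,u_1)=s_1$ and $\Theta(\omega,u_{n-1})=s_{n-2}$.
\item Relation \eq{54.3b} with shared vertex $1$ reads $(u_1)_1(n,p)(u_1)_1^{-1} = p_1(2,u_1)p_1^{-1}$. Since $(u_1)_1=1$, this gives $(n,\cdot)$-type terms that must be reinterpreted: the left side lies in $N_0$, and the right side is $p_1 s_1 p_1^{-1}$ or its inverse-conjugate form.
\end{enumerate}

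The main obstacle is the bookkeeping in the last item, together with its mirror at vertex $n$. Under the paper's notational conventions, the generator $(\omega,x)$ only exists for $\omega\notin x$. So for the intersecting pair $x=u_1=\set{2,1}$, $y=p=\set{1,n}$, I must orient the relation as $y_b(c,x)y_b^{-1}=x_b(a,y)x_b^{-1}$, with $b=1$, $a=2$ and $c=n$. This gives $p_1 (n,u_1) p_1^{-1} = (2,p)$, that is, $p_1 s_1 p_1^{-1}=(2,p)$. Using $p_n=p_1^{-1}$ from (1b), this becomes $p_n^{-1}s_1p_n=(2,p)$, which is (3b). The vertex-$n$ case similarly yields (4b). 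I would double-check the direction of conjugation against the convention used in (2c), because a sign or inversion slip there is the likely error. Finally, I would confirm that no relation has been missed by counting the interpretations of each relation at each vertex, as in the proof of \Pref{thepres}.
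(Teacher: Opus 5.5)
Your proposal is correct and follows the paper's route exactly: the paper obtains this presentation by specializing \Pref{thepres} to $\Sigma=\path[n]\subset\cyc[n]$ with $p=\set{1,n}$ and running the procedure of \Rref{proc1}. Your final orientation of (3b), $p_1(n,u_1)p_1^{-1}=(2,p)$, and its mirror at vertex $n$, are the right ones. One bookkeeping point you should flag rather than gloss over: only the $u_i$ with $2\le i\le n-2$ are disjoint from $p$, so what you actually derive for (2d) is $(i+1,p)=(i,p)$ for $2\le i\le n-2$. The printed range $i=1,\dots,n-3$ is therefore off by one; for $i=1$ it would equate $(2,p)$ with $(1,p)=p_1$, which is not a type-$2$ generator.
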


Taking advantage of \Rref{proc2}, we notice that all the generators $(\omega,p)$ coincide, so we (purposefully) denote $(\omega,p) = s_0$. We can thus translate the previous presentation, still assuming $n>3$, to:
\begin{cor}\label{cor76}
The group $\GG_0(\path[n],\cyc[n])$ is generated by $\Cover = \sg{s_1,\dots,s_{n-2}}$ and the generators $s_0$ and $p_1$,
subject to the following relations:
\begin{enumerate}
\item
\begin{enumerate}
\item $s_0^2 = 1$;
\item $[s_{i},s_0] = 1$ \quad for $i = 2,\dots,n-3$:
\item $s_1s_0s_1 = s_0s_1s_0$;
\item $s_{n-2}s_0s_{n-2} = s_0s_{n-2}s_0$;
\end{enumerate}
\item
\begin{enumerate}
\item $p_1s_ip_1^{-1} = s_{i-1}$ \quad for $i = 1,\dots,n-2$;
\item $p_1s_0p_1^{-1} = s_{n-2}$.
\end{enumerate}
\end{enumerate}
\end{cor}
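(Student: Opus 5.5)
The plan is to obtain \Cref{cor76} from \Pref{thepres.which} by Tietze transformations. Three things happen: the generators $(\omega,p)$ are identified, $p_n$ is eliminated, and each remaining relation is rewritten. The Coxeter relations among $s_1,\dots,s_{n-2}$ are kept unchanged, since $\Cover[{\path[n]}]$ is included as a subgroup in both presentations.

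\emph{Step 1: collapse the $(\omega,p)$.} Relation \eq{meme} comes from \Pref{thepres}.\eq{54.2c} applied to $x=p$ and $y=u_i=\set{i,i+1}\in E(\Sigma)$. The edge $u_i$ is disjoint from $p=\set{1,n}$ exactly when $2\le i\le n-2$, and for these $i$ the relation gives $(i,p)=(i+1,p)$. Chaining over $i=2,\dots,n-2$ identifies all of $(2,p),\dots,(n-1,p)$; this is \Rref{proc2}, because $\Sigma-p$ is the single path on $2,\dots,n-1$. I would call this common element $s_0$ and delete the redundant generators. Relation 1(a) then becomes $s_0^2=1$.

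\emph{Step 2: rewrite the relations involving only $s_0$ and the $s_i$.} Relation 2(a) gives $[s_i,s_0]=1$ whenever some $\omega$ with $1<\omega<i+1$ exists, that is, for $i\ge2$. Relation 2(b) gives $[s_{i+1},s_0]=1$ whenever $i+2<n-1$. Together these yield $[s_i,s_0]=1$ for $i=2,\dots,n-3$. Neither covers $s_1$ or $s_{n-2}$, and this must be so, since those two braid with $s_0$ instead. Relations 3(a) and 4(a) become $s_1s_0s_1=s_0s_1s_0$ and $s_{n-2}s_0s_{n-2}=s_0s_{n-2}s_0$. These are non-vacuous because $n>3$ guarantees an admissible $\omega$ (for 3(a), $\omega=3\le n-1$).

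\emph{Step 3: eliminate $p_n$ and rewrite the conjugations.} Relation 1(b) gives $p_n=p_1^{-1}$, so $p_n$ can be removed. Relation 2(c), $s_{i+1}=p_1^{-1}s_ip_1$ for $i=1,\dots,n-3$, is equivalent to $p_1s_{j}p_1^{-1}=s_{j-1}$ for $j=2,\dots,n-2$. Relation 3(b), $p_1s_1p_1^{-1}=(2,p)=s_0$, supplies the case $j=1$ with $s_{j-1}=s_0$. Relation 4(b), $p_1^{-1}s_{n-2}p_1=(n-1,p)=s_0$, is exactly $p_1s_0p_1^{-1}=s_{n-2}$. This produces every relation listed in \Cref{cor76}.

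Since each step is an invertible Tietze move, the two presentations define the same group. The only delicate point is the index bookkeeping: checking which $i$ and $\omega$ actually occur in 2(a), 2(b), 3(a) and 4(a), and confirming that the identification in Step 1 covers every $\omega\notin\set{1,n}$. That check is where I would be most careful, and it is the place where the hypothesis $n>3$ is used.
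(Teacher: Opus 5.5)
Your proposal is correct and follows the paper's route: the paper invokes \Rref{proc2} to identify all the $(\omega,p)$ with a single generator $s_0$, then translates the presentation of \Pref{thepres.which}, which is exactly your Steps 1--3. You add the bookkeeping the paper leaves implicit, namely the substitution $p_n=p_1^{-1}$ and the ranges of $i$ and $\omega$, and you correctly read relation \eq{meme} as coming from the edges $u_i$ with $2\le i\le n-2$, the ones disjoint from $p$.
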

The first part of \Cref{cor76} shows that $H = \sg{s_0,s_1,\dots,s_{n-2}}$ is the Coxeter group of type $\tilde{A}_{n-2}$, isomorphic to $C(\cyc[n-1])$. The second part shows that~$H$ is a normal subgroup of $\GG_0(\path[n],\cyc[n]) \isom C(\cyc[n])_o$, on which $p_1$ acts by rotating the generators. In particular $p_1^{n-1}$ is a central element, in fact generating the center. Letting $\rho \co C(\cyc[n-1]) \ra C(\cyc[n-1])$ denote this rotation map of order $n-1$, we conclude that
\begin{cor}
The stabilizer $C(\cyc[n])_o$, where $n>3$, is an HNN extension:
$$C(\cyc[n])_o = \HNN(C(\cyc[n-1]);\rho).$$
\end{cor}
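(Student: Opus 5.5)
The plan is to read the HNN structure directly off the presentation in \Cref{cor76}. The first group of relations (1a)--(1d), together with the Coxeter relations among $s_1,\dots,s_{n-2}$ coming from $\Cover[{\path[n]}] \isom S_{n-1}$, is exactly the presentation of the Coxeter group with diagram the cycle on $s_0,s_1,\dots,s_{n-2}$. Since $n>3$, this cycle has $n-1\geq 3$ vertices and is simply-laced, so the group is $W(\cyc[n-1]) = C(\cyc[n-1])$ by \Eref{cn}.

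Next I will check that the relations (2a)--(2b) define an automorphism. Setting $\rho(s_i) = s_{i-1}$ for $i\geq 1$ and $\rho(s_0) = s_{n-2}$ (indices mod $n-1$), $\rho$ is a rotation of the cycle diagram. It therefore preserves all Coxeter relations and extends to an automorphism $\rho$ of $C(\cyc[n-1])$ of order $n-1$. The relations (2) then say precisely that $p_1 h p_1^{-1} = \rho(h)$ for all generators $h$ of $H = C(\cyc[n-1])$.

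Consequently the presentation of \Cref{cor76} reads
$$\sg{C(\cyc[n-1]), p_1 \subjectto p_1 h p_1^{-1} = \rho(h),\ h \in C(\cyc[n-1])}.$$
This is by definition the HNN extension of $C(\cyc[n-1])$ whose associated subgroups are the whole group, identified via $\rho$; equivalently it is the semidirect product $C(\cyc[n-1]) \rtimes_\rho \Z$. Combining this with the isomorphism $C(\cyc[n])_o \isom \GG_0(\path[n],\cyc[n])$ from \Pref{RMhere} gives the claim. Britton's lemma, or the semidirect product description, also confirms that $H$ embeds, consistent with the exact sequence \eq{Ccyc0}.

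The main obstacle is verifying that \Cref{cor76} is a complete and correct translation of \Pref{thepres.which}. In particular one must check that all $(\omega,p)$ collapse to the single generator $s_0$ via relation \eq{meme}, and that the relations of types 3(b) and 4(b) become $p_1 s_0 p_1^{-1} = s_{n-2}$, using $p_n = p_1^{-1}$. A further point is that no relation in (1) and (2) other than the listed ones survives. Once that is granted, the remaining steps are formal.
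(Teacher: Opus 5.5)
Your proposal is correct and follows essentially the same route as the paper: it reads off from \Cref{cor76} that $\sg{s_0,\dots,s_{n-2}}$ is the Coxeter group of type $\tilde{A}_{n-2}\isom C(\cyc[n-1])$ and that $p_1$ acts on it by the rotation $\rho$, which gives the HNN extension. Two points you make explicit are left implicit in the paper: that $\rho$, being a diagram automorphism, is a well-defined automorphism, and that the group is $C(\cyc[n-1])\rtimes_\rho\Z$, so the Coxeter group genuinely embeds rather than merely mapping onto $H$.
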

This gives a more explicit form to the short exact sequence~\eq{Ccyc0}.

\smallskip
\subsection{A cycle with a tail}\label{ss:tail}

Now consider the union of $\path[n]$ and the edge $p = \set{1,k}$, where $3 \leq k \leq n-1$. This is the union of a cycle $\cyc[k]$ with a path $\path[n-k+1]$, glued in a common vertex at the end of the path. We denote this (unambiguous) gluing by $\cycpath{k}{n-k+1}$. Clearly, $\path[n]$ may be taken as a spanning subtree of $\cycpath{k}{n-k+1}$.

\begin{figure}
$$\xymatrix@C=24pt@R=12pt{
1 \ar@{-}[r]_{u_1} \ar@/^24pt/@{-}[rrrr]_p & 2 \ar@{-}[r]_{u_2} & 3 \ar@{.}[r] & k\!-\!1 \ar@{-}[r]_(0.6){u_{k-1}} & k \ar@{-}[r]_(0.4){u_k} & k\!+\!1 \ar@{.}[r] & n-1 \ar@{-}[r]_(0.6){u_{n-1}} & n \\
}
$$
\caption{The graph $\cycpath{k}{n-k+1}$ studied in \Ssref{ss:tail}}\label{tailfig}
\end{figure}

Thus, take $\Gamma = \cycpath{k}{n-k+1}$, as depicted in \Fref{tailfig}.  By \Rref{proc2}, $(2,p) = \cdots = (k-1,p)$, and $(k+1,p) = \cdots = (n,p)$. We denote the former by $s_0$ and the latter by $t$. Also note that analogously to \Pref{thepres.which}.\eq{TC.1b}, here $p_1p_k = 1$. Let us now apply the procedure of \Rref{proc1} as above.

\begin{prop}\label{cyctail} 
The group $\GG_0(\path[n], \cycpath{k}{n-k+1})$ is generated by $\Cover[{\path[n]}] = \sg{s_1,\dots,s_{n-2}} \isom S_{n-1}$ and the generators $s_0$, $t$ and~$p_1$, subject to the following relations:
\begin{enumerate}
\item\label{XT1} $s_0^2 = t^2 = 1$;
\item \label{XT2}
\begin{enumerate}
\item\label{XT2a} $s_1s_0s_1 = s_0s_1s_0$ \qquad if $4 \leq k$;
\item\label{XT2b} $[s_i,s_0] = 1$ \qquad ($i = 2,...,k-3$);
\item\label{XT2c} $s_{k-2}s_0s_{k-2} = s_0s_{k-2}s_0$ \qquad if $4 \leq k$;
\item\label{XT2d} $s_{k-1}s_0s_{k-1} = s_0s_{k-1}s_0$;
\item\label{XT2e} $[s_i,s_0] = 1$ \qquad ($i = k,...,n-2$);
\end{enumerate}
\item \label{XT3}
\begin{enumerate}
\item\label{XT3a} $s_1ts_1 = ts_1t$;
\item\label{XT3b} $[s_i,t] = 1$ \qquad ($i = 2,...,k-2$);
\item\label{XT3c} $s_{k-1}ts_{k-1} = ts_{k-1}t$;
\item\label{XT3d} $s_{k}ts_{k} = ts_{k}t$ \qquad if $k \leq n-2$;
\item\label{XT3e} $[s_i,t] = 1$ \qquad ($i = k+1,...,n-2$);
\end{enumerate}
\item\label{XT5} $s_0 t s_0 = t s_0 t$;
\item \label{XT4}
\begin{enumerate}
\item\label{XT4a} $p_1 s_i p_1^{-1} = s_{i-1}$ \qquad ($i = 1,\dots,k-2$);
\item\label{XT4b} $p_1 s_0 p_1^{-1} = s_{k-2}$;
\item\label{XT4c} $p_1 t p_1^{-1}=  s_{k-1}$;
\item\label{XT4d} $p_1 s_{i} p_1^{-1} = s_{i}$ \qquad ($i = k,\dots,n-2$).
\end{enumerate}
\end{enumerate}
\end{prop}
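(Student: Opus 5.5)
We derive the presentation in the same way as \Pref{thepres.which} and \Cref{cor76}: apply the procedure of \Rref{proc1} to \Pref{thepres}, with $\Gamma=\cycpath{k}{n-k+1}$ and spanning path $\Sigma=\path[n]$. The presentation then has three parts: the copy of $\Cover[{\path[n]}]=\sg{s_1,\dots,s_{n-2}}\isom S_{n-1}$, obtained via $\Theta$ as in \Ssref{ss:71}; the type-$2$ generators $(\omega,p)$ for $\omega\notin\set{1,k}$; and $p_1,p_k$. Relations among the $s_i$ alone are already encoded in $\Cover$, so only relations involving the new edge $p=\set{1,k}$ need to be listed.

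First we collapse generators. Relation \eq{54.2c} with $y=u_i$ disjoint from $p$ identifies $(i,p)=(i+1,p)$ whenever $u_i\in\Sigma$ avoids $1,k$. By \Rref{proc2} this leaves exactly two classes, $s_0=(2,p)=\cdots=(k-1,p)$ and $t=(k+1,p)=\cdots=(n,p)$. Relation \eq{54.1}(b) gives $p_k=p_1^{-1}$, and the remaining part of \eq{54.1} gives \eq{XT1}.

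Next we translate the relations indexed by edge pairs containing $p$, one type at a time.
\begin{enumerate}
\item Disjoint pairs $(p,u_i)$ at a vertex $\omega$ off both edges, via \eq{54.2a}. Here $(\omega,u_i)$ is replaced by $\Theta(\omega,u_i)$, which is $s_{i-1}$ or $s_i$ according to the side of $\omega$. Choosing $\omega$ in the $s_0$-range or the $t$-range yields the commutations \eq{XT2b}, \eq{XT2e}, \eq{XT3b}, \eq{XT3e}.
\item Intersecting pairs $(p,u_1)$, $(p,u_{k-1})$, $(p,u_k)$ via \eq{54.3a}. These give the braid relations \eq{XT2a}, \eq{XT2c}, \eq{XT2d}, \eq{XT3a}, \eq{XT3c}, \eq{XT3d}, with the side conditions $4\le k$ and $k\le n-2$ expressing when a suitable $\omega$ exists.
\item Relations \eq{54.2b} and \eq{54.3b} at the endpoints. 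These are the conjugation relations by $p_1$ or $p_k$, as in \Cref{caseuni}. They give \eq{XT4a}, \eq{XT4b}, \eq{XT4c}, together with $p_1s_ip_1^{-1}=s_i$ for the tail edges, which is \eq{XT4d}. For example, for $y=u_i$ with $i\ge k$ we get $p_1(k,u_i)p_1^{-1}=(1,u_i)$, and both sides are $\Theta$-images equal to $s_{i-1}$ or $s_i$.
\end{enumerate}
Each of these steps is routine bookkeeping of which side of $u_i$ the vertex $\omega$ lies on.

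The main obstacle is \eq{XT5}, $s_0ts_0=ts_0t$. No defining relation of \Pref{thepres} directly involves both $(\omega,p)$ for $\omega$ on the cycle and $(\omega',p)$ on the tail, because the two kinds never appear at a common vertex. The relation must therefore be derived by conjugation, which is exactly the phenomenon flagged after \Dref{Ndef}. The plan is to conjugate the braid relation \eq{XT2d}, $s_{k-1}s_0s_{k-1}=s_0s_{k-1}s_0$, by $p_1^{-1}$. By \eq{XT4c}, $p_1^{-1}s_{k-1}p_1=t$. We also need $p_1^{-1}s_0p_1$ to be $s_0$, or at least to braid correctly with $t$. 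This should follow from \eq{XT4b} combined with the $s_i$-relations, or from a second application of \eq{54.3b} at the vertex $k$ with $y=u_{k-1}$, which relates $p_k(\cdot)p_k^{-1}$ on $s_0$. If the direct conjugation does not close up, the fallback is the following. Interpret $xyxyxy=1$ for the pair $(p,u_{k-1})$ or $(p,u_k)$ at the vertex $k$ or $1$ to obtain an expression of $t$ as a $p_1$-conjugate. Then check the braid relation in the image under $\Psi_o$ in $C(\Gamma)$, where both elements are explicit reflections, and transport it back using injectivity of $\Psi_o$ (\Pref{RMhere}).

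Finally, we verify that every relation of \Pref{thepres} has been accounted for. Conversely, each listed relation either appears among them or is derived, as \eq{XT5} is.
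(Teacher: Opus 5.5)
Your bookkeeping for the generators and for \eq{XT1}--\eq{XT4} matches the paper's approach (apply \Rref{proc1} to \Pref{thepres}) and is fine, apart from a small slip. In your illustration of \eq{XT4d}, the edge $u_k=\set{k,k+1}$ meets $p$, so the range must be $i\ge k+1$, where both sides are $s_{i-1}$.

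The genuine gap is \eq{XT5}, the only relation that is not read off directly. Your main plan is to conjugate \eq{XT2d}, $s_{k-1}s_0s_{k-1}=s_0s_{k-1}s_0$, by $p_1^{-1}$, and this cannot produce it. By \eq{XT4a} with $i=1$ we have $p_1s_1p_1^{-1}=s_0$, so $p_1^{-1}s_0p_1=s_1$. Together with $p_1^{-1}s_{k-1}p_1=t$, the conjugated relation is $ts_1t=s_1ts_1$, which is just \eq{XT3a} again. You read \eq{XT4b} in the wrong direction: it gives $p_1^{-1}s_{k-2}p_1=s_0$, not a formula for $p_1^{-1}s_0p_1$. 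Your fallback is to check the braid relation for $\Psi_o(s_0),\Psi_o(t)$ in $C(\Gamma)$ and pull it back along the injective $\Psi_o$. That is sound in principle but not carried out. Its first step, writing $t$ as a $p_1$-conjugate, is only \eq{XT4c}, which you already have.

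The missing idea is to choose the relation to conjugate so that its two letters are $p_1s_0p_1^{-1}$ and $p_1tp_1^{-1}$. By \eq{XT4b} and \eq{XT4c}, these are $s_{k-2}$ and $s_{k-1}$. The braid relation $s_{k-2}s_{k-1}s_{k-2}=s_{k-1}s_{k-2}s_{k-1}$ already holds in $\sg{s_1,\dots,s_{n-2}}\isom S_{n-1}$. Conjugating it by $p_1^{-1}$ gives $s_0ts_0=ts_0t$ at once, and this also works for $k=3$, where $s_{k-2}=s_1$. This is the paper's derivation, and with it your argument is complete.
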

\begin{proof}
This is \Rref{proc1} applied to the presentation of \Pref{thepres} for $\Gamma = \cycpath{k}{n-k+1}$. After some reordering, this gives \eq{XT1},\eq{XT2}, \eq{XT3} and \eq{XT4}.  The conditional relations \eq{XT2a} and \eq{XT2c} are obtained from relations \eq{54.3a} in \Pref{thepres} only when the necessary~$\omega$ exists, which is the case only when $k \geq 4$.
The relation~\eq{XT5} in this list, $s_0 t s_0 = t s_0 t$, is {\emph{not}} a direct outcome of the procedure; however it does follow by conjugating $s_{k-2}s_{k-1}s_{k-2}=s_{k-1}s_{k-2}s_{k-1}$ by $p_1^{-1}$, using relations \eq{XT4b} and~\eq{XT4c}.
\end{proof}

In the group
$\GG_0(\path[n],\cycpath{k}{n-k+1}) = \sg{p_1,t,s_0,s_1\dots,s_{n-2}}$ defined in \Pref{cyctail}, let $$P = \sg{s_0,s_1,\dots,s_{k-2},s_k,\dots,s_{n-2}}.$$
Let $\varphi \in \Aut(P)$ be defined on $\sg{s_0,\dots,s_{k-2}}$ by the rotation $s_i \mapsto s_{i-1 \pmod {k-1}}$, and on $s_k,\dots,s_{n-2}$ as the identity. Extend this automorphism to an isomorphism $\varphi \co \sg{P,t} \ra \sg{P,s_{k-1}}$ by sending $t \mapsto s_{k-1}$.

\begin{rem}\label{oops}
\Dref{CT} defines the group $C(\Gamma)$ when $\Gamma$ is a simple graph. The definition can be extended to any graph: take the same relations, where if $u,u'$ have the same vertices then the respective pair of generators satisfy no relation. For example, if $\Pi$ is the graph depicted in \Fref{Gammastar} for $k = 3$ and $n = 4$, then $xyx = yxy$ for any two generators of $C(\Pi)$ other than $s_0,s_1$.
\end{rem}

\begin{cor}\label{O--}
The group $\GG_0(\path[n],\cycpath{k}{n-k+1})$ is an HNN extension of a Coxeter group via an isomorphism of parabolic subgroups. Explicitly,
\begin{enumerate}[label=(\alph*)]
\item\label{MM1} $P = \sg{s_0,\dots,s_{k-2}, s_k,\dots,s_{n-2}}$ is the Coxeter group of type $\tilde{A}_{k-2} \times A_{n-k-1}$;
\item\label{MM2} $\sg{P,s_{k-1}} \isom \sg{P,t}$ are isomorphic to $C(\cycpath{k-1}{n-k+1})$; 
\item\label{MM3} $\sg{P,s_{k-1},t}$ is the Coxeter group $C(\Pi)$, where $\Pi$ is the graph given in \Fref{Gammastar} (\Rref{oops} applies if $k = 3$).
\item and finally,
$$\GG_0(\path[n],\cycpath{k}{n-k+1}) = \HNN(\sg{P,s_{k-1},t}; \, \sg{P,t},\, \sg{P,s_{k-1}}, \, \varphi).$$
\end{enumerate}
\end{cor}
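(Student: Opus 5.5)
The plan is to read everything off the presentation of \Pref{cyctail} and then identify the relevant subgroups using known presentations of Coxeter groups. The guiding observation is that, once $p_1$ is set aside, relations \eq{XT1}--\eq{XT5} are exactly the Coxeter relations of a simply-laced diagram on $t,s_0,s_1,\dots,s_{n-2}$. Relations \eq{XT4} say that $p_1$ conjugates the generating set of $\sg{P,t}$ onto that of $\sg{P,s_{k-1}}$ via~$\varphi$. So I will first define the abstract Coxeter group $M$ on these generators with these relations, and check that its diagram is $\Line{\Pi}$; by \Cref{C=WL} (together with \Rref{oops} when $k=3$), this gives $M=C(\Pi)$.

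The structural claims come next. By \Pref{parabolic}, or the general fact that parabolic subgroups of Coxeter groups are Coxeter on the induced diagram, the parabolic subgroup of $M$ on $\set{s_0,\dots,s_{k-2},s_k,\dots,s_{n-2}}$ has a diagram in which $s_0,\dots,s_{k-2}$ form a $(k-1)$-cycle. This uses \eq{XT2a}--\eq{XT2c} together with the relations among the $s_i$. Meanwhile $s_k,\dots,s_{n-2}$ form a path, and the two pieces are disconnected, since $s_{k-1}$ is missing and \eq{XT2e} holds. This gives type $\tilde A_{k-2}\times A_{n-k-1}$, which is \ref{MM1}.

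Adding $s_{k-1}$ to $P$ joins it to $s_{k-2},s_0$ and to $s_k$. I will identify the resulting diagram with $\Line{\cycpath{k-1}{n-k+1}}$, which is a $(k-1)$-cycle with a triangle attached at one vertex, continuing into a path. By \eq{XT3b}, \eq{XT3c}, \eq{XT5} and \eq{XT3d}, the generator $t$ is joined to $s_0$, $s_1$ and $s_k$, so the diagram of $\sg{P,t}$ has the same shape. The automorphism $\varphi$ visibly maps one diagram onto the other: rotating the cycle carries $s_1\mapsto s_0$ and $s_0\mapsto s_{k-2}$, and it sends $t\mapsto s_{k-1}$. This establishes \ref{MM2}, and $\varphi$ is an isomorphism of parabolic subgroups.

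For the HNN statement, I will use that the full presentation of \Pref{cyctail} is precisely $\sg{M,p_1\subjectto p_1 g p_1^{-1}=\varphi(g),\ g\in\set{s_0,\dots,s_{k-2},s_k,\dots,s_{n-2},t}}$. For this to be an HNN extension one needs the conjugation relations to define an isomorphism of subgroups of $M$ and not merely of abstract groups. This holds because $\sg{P,t}$ and $\sg{P,s_{k-1}}$ are standard parabolic subgroups of $M$, hence Coxeter on their induced diagrams, and $\varphi$ is a diagram isomorphism.

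The main obstacle is bookkeeping: checking that no relation in \Pref{cyctail} is missing from, or extra to, the Coxeter presentation of $M$. In particular I must confirm that $t$ commutes with every generator not listed, that $s_0$ and $s_{k-1}$ are correctly related by \eq{XT2d}, and that the small cases $k=3$ and $k=n-1$ (where some relations are vacuous and, for $k=3$, $s_0$ and $s_1$ become a double edge with no relation) match \Rref{oops} and \Fref{Gammastar}. I would handle these edge cases last, by direct inspection.
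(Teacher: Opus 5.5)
Your proposal is correct and follows essentially the same route as the paper. Like the paper, you identify the base group cut out by \eq{XT1}--\eq{XT3} and \eq{XT5} as $C(\Pi)$, read off (a)--(c) from standard parabolic subgroups (edge deletions in $\Pi$), and get the HNN structure from \eq{XT4}; your explicit check that $\varphi$ is a diagram isomorphism $\sg{P,t}\to\sg{P,s_{k-1}}$ of standard parabolic subgroups spells out a step the paper leaves implicit. There are two harmless slips: the bond between $t$ and $s_1$ in $\sg{P,t}$ comes from \eq{XT3a} (\eq{XT3c} concerns $s_{k-1}\notin\sg{P,t}$), and in $\Line{\cycpath{k-1}{n-k+1}}$ the triangle $s_{k-2},s_0,s_{k-1}$ is glued to the $(k-1)$-cycle along the edge $\set{s_{k-2},s_0}$, not at a single vertex.
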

\begin{proof}
Starting from the final statement, we observe that relation~\eq{XT4} defines the conjugation action of $p_1$ from the subgroup $\sg{P,t}$, into the subgroup $\sg{P,s_{k-1}}$, which restricts to $\varphi$ on $P$. This gives $\GG_0(\path[n],\cycpath{k}{n-k+1})$ the structure of an HNN extension $\HNN(G; A,B,\varphi)$, where $G = \sg{P,s_{k-1},t}$ and, crucially, the defining relations of $G$ are \eq{XT1},\eq{XT2},\eq{XT3},\eq{XT5}. Inspecting all these relations shows that $G = C(\Pi)$ where $\Pi$ is the graph given in \Fref{Gammastar}.

Once we know $G$ is a Coxeter group, the parabolic subgroups are of the form $C(\cdot)$, where the graph is obtained by deleting edges, which proves \ref{MM2} and \ref{MM3}. It also shows that $P = C(\cyc[k-1] \cup \path[n-k])$, a disjoint union, which proves \ref{MM1}. Here~$\cyc[2]$ denotes a pair of parallel edges on two vertices, for which $C(\cyc[2]) = D_{\infty}$ is the Coxeter group of type $\tilde{A}_1$.
\end{proof}

\begin{figure}
$$\xymatrix@C=12pt@R=12pt{
 \circ \ar@/_6pt/@{-}[dd]_{s_1} \ar@/^6pt/@{-}[dd]^{s_0} \ar@{-}[rrd]^{s_2} & {} & {}& {} & {} & {} & {}& {}\\
& {}{} & \circ \ar@{-}[r]^{s_3} & \circ \ar@{.}[r] & \circ \ar@{-}[r]^{s_{n-2}} & \circ \\
 \circ \ar@{-}[rru]_{t} & {}& {} & {} & {} & {} & {} & {}
}
\quad
\xymatrix@C=12pt@R=12pt{
\circ \ar@{-}[r]^{s_{k-2}} \ar@/_24pt/@{.}[dd] & \circ \ar@{-}[rd]^{s_{k-1}} \ar@{-}[dd]_{s_0} & {} & {} & {} & {} \\
{} & {} & \circ \ar@{-}[r]^{s_{k}} & \circ \ar@{.}[r] & \circ \ar@{-}[r]^{s_{n-2}} & \circ \\
\circ \ar@{-}[r]^{s_1} & \circ \ar@{-}[ru]_t & {} & {} & {} & {} \\
}
$$
\caption{Graphs for the stabilizer $C(\cycpath{k}{n-k+1})_o$: the cases $k = 3$ (left), $k \geq 4$ (right).
}\label{Gammastar}
\end{figure}

After presenting $\GG_0(\path[n],\cycpath{k}{n-k+1})$ as an HNN extension, let us contrast this presentation with \Cref{caseuni}.
\begin{exmpl}\label{badN0bad}
Consider the simplest case of a cycle with tail, namely $\Gamma = \cycpath{3}{2}$ with $\Sigma = \path[4]$ as above. This is the case $k = 3$ and $n = 4$ above. Working through the definition, $N_0(\Sigma,\Gamma) = \sg{s_0,s_1,s_2,t}$, where $s_i^2 = t^2 = 1$, and $(s_0s_2)^3 = (s_1s_2)^3 = (s_1t)^3 = (s_2t)^3 = 1$. The relation $(s_0t)^3 = 1$ is not assumed, and since this is a Coxeter presentation, $s_0t$ has infinite order in $N_0(\Sigma,\Gamma)$. However this relation does hold in $\GG_0(\Sigma,\Gamma)$, by letting $p_1^{-1}$ act on $(s_1s_2)^3 = 1$. Hence $N_0 \ra \GG_0$ is not an embedding.
\end{exmpl}

\section{The structure of $\GG_0(\Sigma,\Gamma)$}\label{sec8}

In this final section we describe the stabilizer $\GG_0(\Sigma,\Gamma)$ by an iterated relative conjugation presentation, with $\card{\Gamma-\Sigma} = \rank \pi_1(\Gamma)$ steps. The presentation begins with the Coxeter group $N_0(\Sigma,\Gamma)$ defined in \Dref{Ndef}.

Let $\Gamma$ be a connected simple graph with a spanning subtree $\Sigma$. We fix an ordering of the new edges: $E(\Gamma) = E(\Sigma) \cup \set{x^{(1)},\dots,x^{(t)}}$. For each $i$ fix the vertices $a_i,b_i$ of the edge $x^{(i)}$.

For $j = 1,\dots,t$, let $N_j(\Sigma,\Gamma)$ be the group generated by $N_0(\Sigma,\Gamma)$ and the generators $x^{(i)}_{a_i},\, x^{(i)}_{b_i}$ for $i \leq j$, subject to the relations given in the presentation of $\GG_0(\Sigma,\Gamma)$ (\Pref{thepres}) in which those generators participate. Identification of generators defines a sequence of maps $N_0 \ra N_1 \ra \cdots \ra N_t = \GG_0(\Sigma,\Gamma)$, which a priori may not be injective.

\begin{prop}\label{HNN4}
With the notation above, $N_j$ admits a relative conjugation presentation over $N_{j-1}$, with one additional generator.
\end{prop}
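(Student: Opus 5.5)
Write $x = x^{(j)} = \set{a,b}$ with $a = a_j$, $b = b_j$. By definition, $N_j$ is obtained from $N_{j-1}$ by adjoining the two generators $x_a, x_b$ together with every relation of \Pref{thepres} in which one of them appears. The plan is to list these relations and rewrite each one into the form $x_a u x_a^{-1} = v$ with $u,v \in N_{j-1}$.

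First, relation \eq{54.1}(b) is $x_ax_b = 1$. So $x_b = x_a^{-1}$, and we eliminate $x_b$ (Tietze). This leaves the single new generator $q = x_a$. Relation \eq{54.1c} does not apply, since $x \notin E(\Sigma)$.

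Next, we go through the remaining relations involving $x_a$ or $x_b$; these have at most one $x_\ast$ conjugating factor.

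\eq{54.2b}: for $y$ disjoint from $x$, it reads $x_a(b,y)x_a^{-1} = (a,y)$. This is already a conjugation relation over $N_0 \to N_{j-1}$.

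\eq{54.2c}, applied with the roles of $x$ and $y$ swapped: this involves $x_\ast$ only when $x$ plays the role of $y$. It then reads $x_c(d,z)x_c^{-1} = (c,z)$ for $x = \set{c,d}$. With $c = a$ this is the previous type. With $c = b$ it becomes $x_a^{-1}(a,z)x_a = (b,z)$, which is equivalent to $x_a(b,z)x_a^{-1} = (a,z)$.

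\eq{54.3b}: for $y$ meeting $x$, the relation $y_b(c,x)y_b^{-1} = x_b(a,y)x_b^{-1}$ may contain both $x_\ast$ and $y_\ast$. If $y \in E(\Sigma)$, then $y_b = 1$ and it becomes a conjugation relation by $x_b = q^{\pm1}$. If $y = x^{(i)}$ with $i < j$, then $y_b \in N_{j-1}$, so $y_b(c,x)y_b^{-1} =: v \in N_{j-1}$, and the relation reads $q^{\pm1}(a,y)q^{\mp1} = v$. If $i > j$, the relation involves $y_\ast$, which is not yet present. By the definition of $N_j$, it is imposed at stage $i$, not at stage $j$. In that case, at stage $i$ we rearrange it as $y_b(c,x)y_b^{-1} = \bigl(x_b(a,y)x_b^{-1}\bigr)$, where the right side lies in $N_{i-1}$.

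I will therefore fix the convention that a relation is assigned to the stage of the largest index among its $x^{(\ast)}$ generators. This is consistent with the definition, reading ``those generators'' as the generators with index $\le j$ and including the newest one. Under this convention, every relation added at stage $j$ has the form $q^{\epsilon}uq^{-\epsilon} = v$ with $u,v$ in the image of $N_{j-1}$. Since $q^{-1}uq = v \iff quq^{-1}$ is replaced by $q v q^{-1} = u$, all relations can be normalized to $quq^{-1} = v$. The case where both edges of \eq{54.3b} are new and equal is impossible, since the edges are distinct.

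The main obstacle is the bookkeeping in \eq{54.3b} when both edges lie outside $\Sigma$. There we must make sure the definition of $N_j$ places the relation at the later index, so that the other conjugator already lives in $N_{j-1}$ and the relation is not a two-generator relation. Note that we claim nothing about injectivity of $N_{j-1} \to N_j$, matching the terminology of a relative conjugation extension.
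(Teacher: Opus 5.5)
Your proposal is correct and follows the paper's argument: use $x^{(j)}_{a_j}x^{(j)}_{b_j}=1$ to eliminate $x^{(j)}_{b_j}=z_j^{-1}$, then observe that every remaining relation of \Pref{thepres} involving the new generator has the form $z_j w z_j^{-1}=w'$ with $w,w'\in N_{j-1}$. Your case-by-case check, in particular the treatment of \eq{54.3b} when the other edge is $y=x^{(i)}$ with $i<j$ (so that $y_b(c,x)y_b^{-1}$ already lies in $N_{j-1}$), just spells out bookkeeping that the paper leaves implicit.
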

\begin{proof}
Let $z_j = x^{(j)}_{a_j}$ be the new generator, noting that $x^{(j)}_{b_j} = z_j^{-1}$. Then $N_j$ is obtained from $N_{j-1}$ by adding the generator $z_j$ and a (finite) set of relations of the form $z_j w z_j^{-1} = w'$ for $w,w' \in N_{j-1}$.
\end{proof}

We thus proved the following generalization of \Cref{caseuni}:
\begin{thm}\label{thegen}
Let $\Gamma$ be a connected simple graph with a spanning subtree $\Sigma$. The stabilizer $\GG_0(\Sigma,\Gamma)$ admits an iterated relative conjugation presentation beginning with the Coxeter
group $N_0(\Sigma,\Gamma)$ and adjoining $t = \rank \pi_1(\Gamma)$ generators, one at each step.
\end{thm}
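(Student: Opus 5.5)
The argument is essentially an induction on $j$ using \Pref{HNN4}. I will make precise what ``iterated relative conjugation presentation'' means here: a chain
$$N_0(\Sigma,\Gamma) \ra N_1 \ra \cdots \ra N_t = \GG_0(\Sigma,\Gamma),$$
where $N_0$ is a Coxeter group and each $N_j$ is obtained from $N_{j-1}$ by adjoining one generator $z_j$ together with relations of the form $z_j w z_j^{-1} = w'$ with $w,w' \in N_{j-1}$. Three things must then be checked. First, $N_0(\Sigma,\Gamma)$ is a Coxeter group; this is read off from \Dref{Ndef}, since every relation there is either an involution relation, a braid relation of order $2$ or $3$, or an identification of two generators, and identifications merely merge generators. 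Second, $N_t = \GG_0(\Sigma,\Gamma)$. Third, each step has the required form, which is \Pref{HNN4}. The count $t = \card{E(\Gamma)-E(\Sigma)} = \rank \pi_1(\Gamma)$ was recorded before \Cref{free}.

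For $N_t = \GG_0(\Sigma,\Gamma)$, I will sort the relations of \Pref{thepres} by which generators $x_a$ they involve. Relation \eq{54.1c} kills $x_a$ for $x \in E(\Sigma)$. The relations of \Pref{thepres} involving no $x_a$ at all are exactly those of \Dref{Ndef}. Here one must note that \eq{54.2b} with $x \in E(\Sigma)$ collapses to $(b,y)=(a,y)$ because $x_a=1$, and likewise \eq{54.3b} with $x,y\in E(\Sigma)$ collapses to $(c,x)=(a,y)$. Every remaining relation involves generators $x^{(i)}_{a_i}$ or $x^{(i)}_{b_i}$ for new edges only. Hence, once all $t$ new generators have been added with their relations, the full presentation of $\GG_0(\Sigma,\Gamma)$ is recovered.

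The main point of care is in \Pref{HNN4}: at step $j$ one adds exactly the relations involving $x^{(j)}_{a_j}$ and $x^{(j)}_{b_j}$ together with possibly earlier new generators. Each such relation must have conjugation form over $N_{j-1}$ with the single letter $z_j = x^{(j)}_{a_j}$. Relation \eq{54.1}(b) gives $x^{(j)}_{b_j} = z_j^{-1}$. Relations \eq{54.2b} and \eq{54.2c} have the form $x_a (b,y) x_a^{-1} = (a,y)$, and the right side lies in $N_0$. Relation \eq{54.3b} is $y_b(c,x)y_b^{-1} = x_b(a,y)x_b^{-1}$; when both $x$ and $y$ are new, it may involve $z_j$ together with an earlier $z_i$. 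If $y = x^{(j)}$ and $i<j$, I rewrite it as $z_j^{\pm1} (c,x) z_j^{\mp1} = w'$ with $w' = x_b(a,y)x_b^{-1} \in N_{j-1}$, which is exactly the allowed form. If $x$ is the one that is not yet adjoined, I swap the roles of $x$ and $y$ and argue symmetrically. Since $z_j^{-1} w z_j = w'$ is equivalent to $z_j w' z_j^{-1} = w$, the sign does not matter.

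Finally, I will stress that, as in the definition of relative conjugation extensions, no injectivity of $N_{j-1} \ra N_j$ is claimed; the issue is illustrated by \Eref{badN0bad}. This completes the argument.
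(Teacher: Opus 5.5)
Your proposal is correct and follows the paper's route. The paper likewise defines $N_j$ by adjoining $z_j = x^{(j)}_{a_j}$ (with $x^{(j)}_{b_j}=z_j^{-1}$), and in \Pref{HNN4} it observes that every added relation has the form $z_j w z_j^{-1}=w'$ with $w,w'\in N_{j-1}$. You additionally spell out two points the paper leaves implicit: the sorting of the relations of \Pref{thepres} that gives $N_t=\GG_0(\Sigma,\Gamma)$, and the mixed case of relation \eq{54.3b} in which both edges are new.
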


As in \Cref{caseuni}, for $j=1$ the elements occurring on the two sides of the conjugation relations are Coxeter generators of $N_0(\Sigma,\Gamma)$, and the two collections generate parabolic subgroups. We do not assert that the resulting pairing extends to an isomorphism of these subgroups, or that the natural map $N_0(\Sigma,\Gamma) \ra N_1(\Sigma,\Gamma)$ is injective. For $j>1$, even the two sides of a conjugation relation need not both lie in $N_0(\Sigma,\Gamma)$: relation \ref{thepres}.\eq{54.3b} may let $z_2$ conjugate elements of the form $z_1wz_1^{-1}$ and $w'$ for $w,w' \in N_0(\Sigma,\Gamma)$, while $z_1 w z_1^{-1} \not \in N_0(\Sigma,\Gamma)$ in general.

Let $N$ be a group and, for $i=1,\dots,t$, let $\mathcal R_i \subseteq N\times N$ be a collection of prescribed pairs. The {\bf{simultaneous relative conjugation extension}} associated to these data is the group
$$
\sg{N,q_1,\dots,q_t \subjectto q_i u q_i^{-1}=v\quad ((u,v)\in\mathcal R_i,\ i=1,\dots,t)}.
$$

We say that a spanning subtree $\Sigma$ of a simple graph $\Gamma$ is {\bf{separating}} if $\Gamma-\Sigma$ is a collection of disjoint edges. A connected simple graph with a separating subtree is {\bf{thin}}. When this is the case, in every instance of the relation \ref{thepres}.\eq{54.3b}, either $x_b = 1$ or $y_b = 1$ by \ref{thepres}.\eq{54.1c}, so all the conjugation relations are to elements of $N_0(\Sigma,\Gamma)$.

This proves:
\begin{thm}\label{thethin}
Let $\Gamma$ be a thin connected simple graph with a separating spanning subtree $\Sigma$. The stabilizer $\GG_0(\Sigma,\Gamma)$ admits a simultaneous relative conjugation presentation over the Coxeter
group $N_0(\Sigma,\Gamma)$, using $t = \rank \pi_1(\Gamma)$ additional generators. For each additional generator, the two collections of Coxeter generators occurring in its conjugation relations generate parabolic subgroups of $N_0(\Sigma,\Gamma)$. 
\end{thm}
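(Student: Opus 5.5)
We plan to read the presentation off \Pref{thepres} directly, with $N_0(\Sigma,\Gamma)$ from \Dref{Ndef} as the base. The generators of $\GG_0(\Sigma,\Gamma)$ are the $(\omega,x)$ and the $x_a$. The $x_a$ with $x \in E(\Sigma)$ are trivial by \ref{thepres}.\eq{54.1c}. For each new edge $x^{(i)} = \set{a_i,b_i}$ we set $q_i = x^{(i)}_{a_i}$ and eliminate $x^{(i)}_{b_i} = q_i^{-1}$ by relation (1b). We then sort the relations of \Pref{thepres} into two groups. The first group consists of those involving only generators $(\omega,x)$; these are (1a), (2a), (3a), together with the specializations of (2b), (2c), (3b) in which the conjugating element is trivial because the relevant edge lies in $\Sigma$. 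These are exactly the defining relations of $N_0(\Sigma,\Gamma)$. The second group consists of the relations involving some $q_i^{\pm1}$.

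The second group is handled by running through it case by case. Relation (2b), $x_a(b,y)x_a^{-1} = (a,y)$ with $x = x^{(i)}$, reads $q_i^{\pm1}(b,y)q_i^{\mp1} = (a,y)$. After inverting if needed, this is of the form $q_i u q_i^{-1} = v$ with $u,v$ Coxeter generators of $N_0$. Relation (2c) is symmetric. The key case is (3b), $y_b(c,x)y_b^{-1} = x_b(a,y)x_b^{-1}$, which could a priori involve two different $q$'s. Here we use separation: $x$ and $y$ share the vertex $b$, and $\Gamma - \Sigma$ is a matching, so at most one of $x,y$ lies outside $\Sigma$. The other factor $x_b$ or $y_b$ is therefore trivial, and the relation again becomes a single conjugation $q_i^{\pm1} u q_i^{\mp1} = v$ with $u,v \in \set{(\omega,z)}$. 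If both $x,y$ lie in $\Sigma$, the relation is already in $N_0$. Thus every relation is either internal to $N_0$ or of the form $(u,v) \in \mathcal R_i$. This gives the simultaneous relative conjugation presentation with $t = \card{E(\Gamma)-E(\Sigma)} = \rank\pi_1(\Gamma)$ extra generators (\Cref{free}).

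For the parabolic claim, we fix $i$ and collect the left-hand elements $u$ and the right-hand elements $v$ of the pairs in $\mathcal R_i$. Each of them is a single generator $(\omega,z)$ of $N_0$. We must make sure that an identification such as $(b,y) = (a,y)$ in $N_0$, coming from (2b)/(2c) with an edge of $\Sigma$, does not spoil this. We handle it by viewing $N_0$ with the quotient Coxeter generating set, in which identified generators are merged. The merged Coxeter presentation is still Coxeter: the relations are $s^2$, commutations, braid relations, and equalities of generators, and merging generators keeps a Coxeter presentation provided no conflicting orders arise; we take this from the fact that $N_0$ is a Coxeter group, noted after \Dref{Ndef}. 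Each collection is then a subset of the Coxeter generating set, and by definition it generates a standard parabolic subgroup, so the cited fact from \cite[1.10]{Humph} is not even needed beyond identifying the subgroups as Coxeter groups.

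The main obstacle is the bookkeeping that every relation (2b), (2c), (3b) of \Pref{thepres} really has at most one nontrivial $x_a$ factor, so that no relation mixes two different $q_i$. This is exactly where the matching hypothesis enters: for intersecting $x,y$ it forces one of them into $\Sigma$. The relations (2b), (2c) involve a single $x_a$ by their form.
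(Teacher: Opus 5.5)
Your proposal is correct and follows the paper's argument. The key point is the same: because $\Gamma-\Sigma$ is a matching, in every instance of relation (3b) of the presentation of $\GG_0(\Sigma,\Gamma)$ one of $x_b,y_b$ is trivial, so every relation involving a new generator is a single conjugation between generators $(\omega,z)$ of $N_0(\Sigma,\Gamma)$, and the remaining relations are exactly those of Definition~\ref{Ndef}. Your extra bookkeeping for the parabolic claim, working with the merged Coxeter generating set so that each collection is a subset of Coxeter generators, makes explicit what the paper treats as evident, and it rests on the same remark that $N_0(\Sigma,\Gamma)$ is a Coxeter group.
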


\end{document}